\newtheorem{theorem}{Theorem}[section]
\newtheorem{conjecture}{Conjecture}[section]
\newtheorem{corollary}[theorem]{Corollary}
\newtheorem{lemma}[theorem]{Lemma}
\newtheorem{proposition}[theorem]{Proposition}
\newtheorem{remark}[theorem]{Remark}
\newenvironment{proof}[1][Proof]{\textbf{#1.} }{\ \rule{0.5em}{0.5em}}
\begin{document}
\title{Global Properties of Graphs with Local Degree Conditions}
\author{EWA KUBICKA, GRZEGORZ KUBICKI\\
\small{Department of Mathematics}\\
\small{University of Louisville, KY, USA}\\
\small{ewa@louisville.edu, grzegorz.kubicki@louisville.edu}\\
ORTRUD R. OELLERMANN\footnote{Supported by an NSERC grant CANADA,  Number 198281-2011}\\
 \small{Department of Mathematics and Statistics}\\
 \small{University of Winnipeg, Winnipeg MB, CANADA}\\
 \small{o.oellermann@uwinnipeg.ca}}
\date{}

\maketitle

\begin{abstract}
Let $\cal P$ be a graph property. A graph $G$ is said to be {\em locally} $\cal P$ (closed locally $\cal P$, respectively) if the subgraph induced by the open neighbourhood (closed neighbourhood, respectively) of every vertex in $G$ has property $\cal P$.  A graph $G$ of order $n$ is said to satisfy {\em Dirac's condition} if $\delta(G) \ge n/2$ and it satisfies {\em Ore's condition} if for every pair $u,v$ of non-adjacent vertices in $G$, $deg(u) + deg(v) \ge n$. A graph is {\em locally Dirac}  ({\em locally Ore}, respectively) if the subgraph induced by the open neighbourhood of every vertex satisfies Dirac's condition (Ore's condition, respectively). In this paper we establish global properties for graphs that are locally Dirac and locally Ore. In particular we show that these graphs, of sufficiently large order, are $3$-connected. For locally Dirac graphs it is shown that the  edge connectivity  equals the minimum degree and it is illustrated that this results does not extend to locally Ore graphs.  We show that $\lfloor n/3 \rfloor -1$ is a sharp upper bound on the diameter of every locally Dirac graph of order $n$.  We show that there exist infinite families of planar closed locally Dirac graphs. In contrast, locally Dirac graphs of sufficiently large order are shown to be non-planar.  It is known that every closed locally Ore graph is hamiltonian. We show that locally Dirac graphs have an even richer cycle structure by showing that all locally Dirac graphs with maximum degree 11 are in fact fully cycle extendable.   This result supports Ryj\'{a}\v{c}ek's well-known conjecture; which states that every connected, locally connected graph is weakly pancyclic.  \\

\medskip

\noindent{\em Keywords}: locally Dirac; locally Ore; connectivity; edge-connectivity; diameter; fully cycle extendable; weakly cycle extendable; hamiltonian; Ryj\'{a}\v{c}ek's conjecture \\
{\em AMS Subject Classification}: 05C38, 05C40, 05C12
\end{abstract}

\section{Introduction}
The development of graph theory has been profoundly influenced by the evolution of the internet and resulting large communication networks. Of particular interest are global properties of social networks, such as facebook, that can be deduced from their local properties. In this paper we investigate global properties in graphs that satisfy certain local degree conditions.

We begin by defining graph properties and invariants that we shall consider. Let $G$ be a graph. The order (number of vertices) of  $G$ is denoted by $n(G)$ or $n$ if $G$ is clear from context. The {\em diameter} of a connected graph $G$ is the maximum distance between all pairs of vertices of $G$. The {\em connectivity}, $\kappa(G)$ of $G$, is the minimum number of vertices of $G$ whose deletion from $G$ produces a disconnected graph or the trivial graph. The {\em edge-connectivity}, $\lambda(G)$ of $G$, is the minimum number of edges of $G$ whose deletion from $G$ produces a disconnected graph or the trivial graph. A graph $G$ is \emph{hamiltonian} if $G$ has a cycle of length $n(G)$. If, in addition, $G$ has a cycle of every length from 3 up to $n(G)$, then $G$ is \emph{pancyclic}. An even stronger notion than pancyclicity is that of full cycle extendability, introduced by Hendry \cite{H1}. A cycle $C$ in a graph $G$ is {\em extendable} if there exists a cycle $C'$ in $G$ that contains all the vertices of $C$ plus a single new vertex.  A graph $G$ is {\em cycle extendable} if every nonhamiltonian cycle of $G$ is extendable. If, in addition, every vertex of $G$ lies on a 3-cycle, then $G$ is \emph{fully cycle extendable.}

 Recall that the {\em girth}, denoted by $g(G)$, is defined as the length of a shortest cycle and the {\em circumference}, denoted by $c(G)$, is the length of a longest cycle in a graph $G$. A graph $G$ is called \emph{weakly pancyclic} if $G$ has a cycle of every length between $g(G)$ and $c(G)$.

By a local property of a graph we mean a property that is shared by the subgraphs induced by the open neighbourhoods of the vertices.
The {\em open neighbourhood} of a vertex $v\in V(G)$ is denoted by $N(v)$ and the {\em closed neighbourhood} of $v$, denoted by $N[v]$ is the set $N(v) \cup \{v\}$. If $X\subseteq V(G)$, the subgraph induced by $X$ is denoted by $\langle X\rangle$. For a given graph property $\cal P$, we call a graph $G$ \emph{locally $\cal P$} if $\langle N(v) \rangle$ has property $\cal P$ for every $v\in V(G)$. Skupie\'{n} \cite{S2} defined a graph $G$ to be \emph{locally hamiltonian} if $\langle N(v) \rangle$ is hamiltonian for every $v\in V(G)$.  Locally hamiltonian graphs were further studied in \cite{Pa,PS,S}. Pareek and Skupie\'{n} \cite{PS} considered locally traceable graphs and  Chartrand and Pippert \cite{CP} introduced locally connected graphs. The latter have since been studied extensively - see for example \cite{CGP,CP,C,GOPS,H1,H2,OS}. A graph is {\em closed locally} $\cal P$ if $\langle N[v] \rangle$ has property $\cal P$ for every $v\in V(G)$.

The minimum and maximum degree of a graph $G$ is denoted by $\delta(G)$ and $\Delta(G)$, respectively. If $G$ is clear from context we use $\delta$ and $\Delta$, instead. For notation and definitions not included here we refer the reader to \cite{bm}.

A classic example of a local property that guarantees hamiltonicity is Dirac's minimum degree condition (see \cite{D}).

\begin{theorem} \emph{\cite{D}} \label{Diracs_condition}
Let $G$ be a graph of order $n \ge 3$. If $\delta(G) \ge n/2$, then $G$ is hamiltonian.
\end{theorem}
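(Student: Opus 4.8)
The plan is to argue via a longest path, using the minimum degree hypothesis first to secure connectivity and then to force a spanning cycle. First I would observe that $G$ must be connected: every component contains at least $\delta(G)+1 \ge n/2 + 1$ vertices, so two disjoint components would already account for more than $n$ vertices, a contradiction (equivalently, a vertex in a smallest component would have degree smaller than $n/2$).

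Next I would take a longest path $P = v_0 v_1 \cdots v_k$ in $G$. By maximality of $P$, every neighbour of the endpoint $v_0$ lies on $P$, and likewise every neighbour of $v_k$; otherwise $P$ could be extended. The heart of the argument is to manufacture a cycle $C$ that uses exactly the vertices of $P$. To this end, set
\[
S = \{\, i : v_0 v_{i+1} \in E(G) \,\}, \qquad T = \{\, i : v_i v_k \in E(G) \,\},
\]
both subsets of $\{0, 1, \ldots, k-1\}$ with $|S| = \deg(v_0)$ and $|T| = \deg(v_k)$. Since $\delta(G) \ge n/2$ gives $|S| + |T| = \deg(v_0) + \deg(v_k) \ge n \ge k+1 > k$, the sets $S$ and $T$ cannot be disjoint. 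Choosing $i \in S \cap T$ yields edges $v_0 v_{i+1}$ and $v_i v_k$, so that
\[
C : v_0 v_1 \cdots v_i v_k v_{k-1} \cdots v_{i+1} v_0
\]
is a cycle through all $k+1$ vertices of $P$.

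Finally I would show $C$ is Hamiltonian. If some vertex lay outside $V(C)$, then since $G$ is connected there would be a vertex $w \in V(G) \setminus V(C)$ adjacent to some $v_j \in V(C)$. Cutting $C$ at $v_j$ produces a Hamiltonian path of $C$ with endpoint $v_j$, and appending $w$ gives a path on $k+2$ vertices, contradicting the maximality of $P$. Hence $V(C) = V(G)$ and $C$ is a Hamiltonian cycle, so $G$ is hamiltonian.

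The main obstacle is the middle step — producing the spanning cycle $C$ from the longest path — and this is precisely where the hypothesis $\delta(G) \ge n/2$ enters in its sharpest form: it is the inequality $\deg(v_0) + \deg(v_k) \ge n$ that exceeds the number of available index ``slots'' $k$ along the path and forces the two neighbour-index sets to overlap. Indeed, this is exactly the observation Ore later abstracted, replacing the minimum-degree requirement by the weaker condition $\deg(u) + \deg(v) \ge n$ for non-adjacent $u,v$. Everything else — the connectivity argument and the final path-extension contradiction — is routine once the crossing edges $v_0 v_{i+1}$ and $v_i v_k$ are in hand.
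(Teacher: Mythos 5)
Your proof is correct: this is the classical longest-path argument for Dirac's theorem, and each step checks out --- the connectivity observation, the fact that a longest path has $k+1\ge \delta+1\ge 3$ vertices so the constructed cycle is genuine, the index-counting step where $\deg(v_0)+\deg(v_k)\ge n\ge k+1>k$ forces $S\cap T\ne\emptyset$, and the final maximality contradiction showing the cycle is spanning. The paper states this result only as a cited classical theorem \cite{D} and gives no proof of its own, so there is nothing to compare against; your argument is the standard one (and, as you note, it is really the proof of Ore's theorem specialized to the Dirac hypothesis).
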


Thus Dirac's condition may be written as `$|N(v)|\geq n(G)/2$ for every vertex $v$ in $G$'. Bondy \cite{B} showed that Dirac's minimum degree condition actually guarantees more than just the existence of a Hamilton cycle.

\begin{theorem} \emph{\cite{B}} If $G$ is a graph such that $\delta(G) \geq n(G)/2$, then $G$ is either pancyclic or isomorphic to the complete, balanced bipartite graph $K_{n/2,n/2}$.
\end{theorem}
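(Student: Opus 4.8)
The plan is to combine Dirac's theorem with a descending induction on cycle length, so that from the Hamilton cycle guaranteed by Theorem~\ref{Diracs_condition} I peel off cycles one vertex shorter at a time. First I would invoke Theorem~\ref{Diracs_condition} to fix a Hamilton cycle $C = v_1 v_2 \cdots v_n v_1$ (we may assume $n \ge 3$, the degenerate cases being immediate, and $\delta(G) \ge n/2$ indeed yields hamiltonicity). To prove pancyclicity it then suffices to produce, for each $\ell$ with $3 \le \ell \le n-1$, a cycle of length $\ell$. The engine of the descent is the elementary observation that if a cycle $u_1 u_2 \cdots u_\ell u_1$ carries a \emph{short chord} $u_i u_{i+2}$, then that chord together with the long arc of the cycle running from $u_{i+2}$ back to $u_i$ (and avoiding $u_{i+1}$) is a cycle of length $\ell - 1$. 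Thus a single short chord lets me pass from an $\ell$-cycle to an $(\ell-1)$-cycle, and starting from $C_n = C$ and iterating I obtain cycles of every length from $n$ down to $3$, provided each cycle produced along the way carries a short chord.

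The substance of the proof lies in the step where the current cycle $Z$ of length $\ell$ has no short chord. Here I would exploit the global degree condition: every vertex of $Z$ has at least $n/2$ neighbours in $G$, yet the two vertices at cyclic distance two from it along $Z$ are forbidden. I would analyze where these many neighbours are forced to go — either onto non-consecutive positions of $Z$, yielding a longer chord that can be recombined with an arc of $Z$ to reroute into an $(\ell-1)$-cycle, or onto the vertices lying off $Z$, which (since those vertices also have degree $\ge n/2$) can be spliced into $Z$. Pushing this case analysis, together with an Ore-type ``crossing chords'' rerouting argument of the kind used in the classical proof of hamiltonicity, should either produce the desired $(\ell-1)$-cycle or expose a rigid bipartition of the neighbourhoods. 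The triangle case $\ell = 3$ deserves separate attention, since the total absence of a triangle is precisely the signature of the bipartite exception.

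Finally I would characterize the obstruction. If the descent stalls, the accumulated structural constraints should force $G$ to be bipartite; and any bipartite graph on $n$ vertices with $\delta(G) \ge n/2$ must have both parts of size exactly $n/2$ with every cross pair adjacent, i.e. $G \cong K_{n/2,n/2}$ — which has no odd cycle and so is not pancyclic. The main obstacle I anticipate is exactly this coupling: showing that the \emph{only} way the short-chord descent can fail is the complete balanced bipartite graph, and not some other near-extremal configuration. Controlling the neighbours of the off-cycle vertices, and ruling out every non-bipartite stall, is where the delicate extremal counting (using the edge bound $e(G) \ge n^2/4$ that follows from $\delta(G) \ge n/2$) will be needed.
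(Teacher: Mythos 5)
The paper does not prove this result; it is quoted directly from Bondy's 1971 paper \cite{B}, so there is no internal argument to measure yours against. Judged on its own terms, your proposal is a plan rather than a proof, and the plan leaves the entire difficulty unresolved. The short-chord observation (a chord $u_iu_{i+2}$ of an $\ell$-cycle yields an $(\ell-1)$-cycle) is correct, and the final reduction (a bipartite graph with $\delta \ge n/2$ must be $K_{n/2,n/2}$) is correct and easy. But everything in between is carried by phrases such as ``should either produce the desired $(\ell-1)$-cycle or expose a rigid bipartition'' and ``the accumulated structural constraints should force $G$ to be bipartite.'' No argument is given for why a chordless-in-the-short-sense cycle can always be rerouted, nor for why the only way the descent can stall globally is the complete balanced bipartite graph. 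That coupling is precisely the theorem, and you say so yourself in the last paragraph; acknowledging the obstacle is not the same as overcoming it.

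There is also a structural flaw in the descent itself. Even granting that a stalled $\ell$-cycle can be ``rerouted'' into some $(\ell-1)$-cycle, that new cycle need not carry a short chord either, so the induction does not telescope: you would have to redo the hard case at every length, and nothing in your sketch controls how the rerouted cycles relate to one another. This is why Bondy's actual argument does not descend cycle by cycle. It fixes a Hamilton cycle, assumes some length $\ell$ is missing, and shows by a counting argument that for suitable index pairs the edges $v_iv_j$ and $v_{i+1}v_{j-\ell+2}$ cannot both be present (their union with arcs of the Hamilton cycle would create an $\ell$-cycle); summing these exclusions forces $e(G) \le n^2/4$, while $\delta \ge n/2$ forces $e(G) \ge n^2/4$, and the equality analysis pins down $K_{n/2,n/2}$. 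Your closing remark about the edge bound $e(G) \ge n^2/4$ gestures at the right tool, but you would need to replace the vertex-by-vertex rerouting with this global pairing/counting argument (or supply an equally rigorous substitute) for the proof to go through.
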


A weaker degree condition that guarantees a graph to be hamiltonian is due to Ore \cite{Ore}.

\begin{theorem} \emph{\cite{Ore}} \label{Ores_condition}
Let $G$ be a graph of order $n$. If $deg_G(u) +deg_G(v) \ge n$ for every pair $u,v$ of non-adjacent vertices of $G$, then $G$ is hamiltonian.
\end{theorem}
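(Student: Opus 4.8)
The plan is to argue by contradiction using a longest path, the classical approach for this type of degree-sum hypothesis. First I would observe that Ore's condition forces $G$ to be connected: if $u$ and $v$ lay in distinct components, of orders $n_1$ and $n_2$ say, then they would be non-adjacent yet satisfy $deg(u) + deg(v) \le (n_1 - 1) + (n_2 - 1) \le n - 2 < n$, contradicting the hypothesis. So connectedness is free.

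Assume $G$ is not hamiltonian and let $P = v_1 v_2 \cdots v_k$ be a longest path in $G$. By maximality of $P$, every neighbour of the endpoint $v_1$ and every neighbour of the endpoint $v_k$ must lie on $P$; otherwise $P$ could be extended. Thus $deg(v_1)$ and $deg(v_k)$ count only vertices among $\{v_1, \ldots, v_k\}$.

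The key step is to produce a cycle through exactly the $k$ vertices of $P$. If $v_1 v_k \in E(G)$ this is immediate. Otherwise I would introduce the index sets $S = \{\, i : v_1 v_{i+1} \in E(G)\,\}$ and $T = \{\, i : v_i v_k \in E(G)\,\}$, both contained in $\{1, \ldots, k-1\}$, with $|S| = deg(v_1)$ and $|T| = deg(v_k)$. Since $v_1$ and $v_k$ are non-adjacent, $|S| + |T| = deg(v_1) + deg(v_k) \ge n \ge k > |\{1, \ldots, k-1\}|$, so the pigeonhole principle yields an index $i$ with both $v_1 v_{i+1} \in E(G)$ and $v_i v_k \in E(G)$. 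Rerouting then gives the cycle $v_1 v_2 \cdots v_i v_k v_{k-1} \cdots v_{i+1} v_1$ on all $k$ vertices of $P$.

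Finally, if $k = n$ this cycle is hamiltonian and we are done. If $k < n$, then connectedness provides a vertex $w \notin \{v_1, \ldots, v_k\}$ adjacent to some $v_j$ on the cycle, and splicing $w$ in at $v_j$ produces a path on $k+1$ vertices, contradicting the maximality of $P$. Hence $G$ is hamiltonian. I expect the main obstacle to be the crossing argument of the third step: seeing that the degree-sum hypothesis translates, through a pigeonhole count on the index sets along the path, into a pair of crossing chords that can be used to close the longest path into a cycle spanning its own vertex set.
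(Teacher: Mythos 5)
Your proof is correct. The paper does not prove this statement at all---it is quoted as a known result with a citation to Ore---so there is nothing internal to compare against; your longest-path argument with the crossing-chord/pigeonhole step on the index sets $S$ and $T$ is the standard textbook proof and is sound. The only caveat is that, as stated here, the theorem silently needs $n\ge 3$ (for $n=2$ the hypothesis holds vacuously for $K_2$, which is not hamiltonian), and your argument implicitly uses this when closing the path of length $k$ into a cycle.
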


Another local property that is often studied in connection with hamiltonicity is the property of being claw-free, i.e., not having the claw $K_{1,3}$ as induced subgraph. Note that a graph $G$ is claw-free if and only if $\alpha (\langle N(v)\rangle)\leq 2$ for every $v\in V(G)$ (where $\alpha$ denotes the vertex independence number).

 It is well known that the {\em Hamilton Cycle Problem} (the problem of deciding whether a graph has a Hamiltonian cycle) is NP-complete, even for claw-free graphs. The following well-known theorem of Oberly and Sumner \cite{OS}, demonstrates the strength of the local connectivity property.

\begin{theorem} \label{oberly and sumner} \emph{\cite{OS}}
If $G$ is a connected, locally connected, claw-free graph of order at least 3, then $G$ is hamiltonian.

\end{theorem}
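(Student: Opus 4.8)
The plan is to argue by contradiction using a longest cycle, exploiting claw-freeness to pin down rigid local structure and local connectivity to manufacture a strictly longer cycle. Suppose $G$ is not hamiltonian and let $C = v_1 v_2 \cdots v_k v_1$ be a longest cycle of $G$, so $k < n$. Since $G$ is connected, some vertex $x \notin V(C)$ is adjacent to a vertex $v_i \in V(C)$; write $v_i^- = v_{i-1}$ and $v_i^+ = v_{i+1}$ for the neighbours of $v_i$ along $C$. If $x$ were adjacent to $v_i^+$ (or to $v_i^-$), then inserting $x$ between $v_i$ and $v_i^+$ would yield a cycle with one more vertex than $C$, contradicting maximality; hence $x \not\sim v_i^+$ and $x \not\sim v_i^-$. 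The three vertices $x, v_i^-, v_i^+$ all lie in $N(v_i)$, and claw-freeness forbids an independent triple in $\langle N(v_i)\rangle$ (such a triple together with $v_i$ would induce a $K_{1,3}$), so we may conclude $v_i^- \sim v_i^+$.

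Next I would invoke local connectivity. Since $\langle N(v_i)\rangle$ is connected, it contains a path from $x$ to $v_i^+$; choose a shortest such path $P$. As $x \not\sim v_i^+$, $P$ has length at least two, and its second vertex $y$ satisfies $y \in N(v_i)$, $y \sim x$, and $y \ne v_i^+$. The goal is now to splice a segment running through $x$ into $C$ while bypassing the stretch $v_i^- v_i v_i^+$, using the newly found chord $v_i^- v_i^+$ as the shortcut that closes the detour, thereby gaining at least the vertex $x$ and producing a cycle longer than $C$. One tracks where the interior vertices of $P$ lie: when they lie off $C$ the neighbourhood argument is iterated, and when they lie on $C$ the chord $v_i^- v_i^+$ supplies the rerouting needed to absorb $x$.

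The crux, and the step I expect to be the main obstacle, is ensuring that this rerouting always yields a \emph{strictly longer} cycle rather than merely a different cycle of the same length. To control this I would fix additional extremal data at the outset: among all longest cycles choose $C$ so that the component $F$ of $G - V(C)$ containing $x$ is as small as possible, and then analyse the attachment set $A = N(F) \cap V(C)$ by a crossing-chord argument. The claw-free reduction above applies at \emph{every} attachment vertex $a \in A$ (each has $a^- \sim a^+$ and is non-adjacent along $C$ to its neighbours in $F$), which severely restricts how the cycle segments between consecutive attachment vertices can interact. Pairing these constraints with the fact that each neighbourhood is traceable — a connected, claw-free neighbourhood has independence number at most two and hence admits a hamiltonian path — forces two attachment segments whose endpoints can be cross-connected through $F$, delivering a cycle longer than $C$ and the desired contradiction. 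Once no vertex can lie off a longest cycle, $C$ is a Hamilton cycle and $G$ is hamiltonian.
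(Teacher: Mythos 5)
A preliminary remark on scope: the paper does not prove this theorem at all --- it is stated as a quoted result of Oberly and Sumner \cite{OS} --- so there is no in-paper proof to compare your attempt against; your proposal has to stand on its own.

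Your opening is correct and is the standard one: for a longest non-Hamiltonian cycle $C$ and an off-cycle vertex $x$ adjacent to $v_i$, maximality forces $x \nsim v_i^{+}$ and $x \nsim v_i^{-}$, and the claw at $v_i$ on $\{x, v_i^{-}, v_i^{+}\}$ then forces the chord $v_i^{-}v_i^{+}$. Your side observation that a connected claw-free neighbourhood is traceable (it is connected with independence number at most $2$, so the Chv\'{a}tal--Erd\H{o}s path condition applies) is also true. But everything after that is a plan rather than a proof, and the step you defer is the entire content of the theorem: converting the $x$--$v_i^{+}$ path inside $\langle N(v_i)\rangle$, whose interior vertices may sit anywhere on or off $C$, into a \emph{strictly longer} cycle. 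You flag this yourself as ``the main obstacle'' and then assert, without argument, that a secondary extremal choice of $C$ together with a ``crossing-chord argument'' \emph{forces} two attachment segments that can be cross-connected through the component $F$ to gain length. No mechanism is supplied for why such a pair of segments must exist, why routing through $F$ gains a vertex rather than merely producing a different cycle of the same length, or how the proposed iteration (``when they lie off $C$ the neighbourhood argument is iterated'') terminates. These are precisely the points at which the published proof --- and, analogously, the non-extendability machinery this paper builds in Lemmas 3.1 and 3.2 for its own Theorem 3.3 --- spends all of its effort, through a careful case analysis of where the relevant neighbours land relative to $C$. As written, the proposal establishes only the easy local structure at an attachment vertex and leaves the theorem unproved.
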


Clark \cite{C} strengthened Theorem \ref{oberly and sumner} by showing that if $G$ is a connected, locally connected, claw-free graph, then $G$ is pancyclic. Subsequently Hendry \cite{H1} showed that under the same conditions the graph is in fact fully cycle extendable.
These results support Bondy's well-known `meta-conjecture' (see \cite{B2}) that almost any condition that guarantees that a graph has a Hamilton cycle actually guarantees much more about the cycle structure of the graph.

If, in Theorem \ref{oberly and sumner}, the claw-free condition is dropped, hamiltonicity is no longer guaranteed. In fact, Pareek and Skupie{\'n} \cite{PS} observed that there exist infinitely many connected, locally hamiltonian graphs that are nonhamiltonian. However, Clark's result led Ryj\'{a}\v{c}ek to suspect that every locally connected graph has a rich cycle structure, even if it is not hamiltonian. He proposed the following conjecture (see \cite{WR}.)

 \begin{conjecture} \label{ryjacek}
 (Ryj\'{a}\v{c}ek) Every locally connected graph is weakly pancyclic.
 \end{conjecture}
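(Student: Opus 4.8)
The plan is to reduce weak pancyclicity to a single cycle-shortening statement and then attack that statement with essentially the only structural tool available, namely the connectivity of each open neighbourhood. First I would observe that for a connected, locally connected graph $G$ of order at least $3$ the girth is already forced to be $3$: any vertex $v$ lying on a cycle has degree at least $2$, and since $\langle N(v)\rangle$ is connected on at least two vertices it contains an edge $xy$, so that $vxy$ is a triangle. Hence $g(G)=3$ whenever $G$ has a cycle at all, and \emph{weakly pancyclic} becomes equivalent to the assertion that $G$ contains a cycle of every length $\ell$ with $3\le \ell\le c(G)$. It therefore suffices to prove the descending statement: if $G$ has a cycle of length $\ell$ with $4\le \ell\le c(G)$, then $G$ has a cycle of length $\ell-1$; iterating this from a longest cycle as the base case would give all intermediate lengths.

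Next I would try to realise the shortening locally. Fix a cycle $C=v_1v_2\cdots v_\ell v_1$ and examine a vertex $v_i$. Its cycle-neighbours $v_{i-1}$ and $v_{i+1}$ both lie in $N(v_i)$, and local connectivity supplies a path $P$ from $v_{i-1}$ to $v_{i+1}$ inside $\langle N(v_i)\rangle$. In the favourable case $v_{i-1}v_{i+1}\in E(G)$ one replaces the sub-path $v_{i-1}v_iv_{i+1}$ of $C$ by the single edge $v_{i-1}v_{i+1}$ and obtains a cycle of length $\ell-1$ immediately. If no such chord exists at any vertex of $C$, I would instead analyse a \emph{shortest} neighbourhood path $P$ and use its interior vertices, all adjacent to $v_i$, to build a detour that re-enters $C$; choosing $P$ shortest and tracking where it first meets $C$ should in many configurations yield a cycle whose length differs from $\ell$ by exactly one.

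The hard part, and the reason this statement is a conjecture rather than a theorem, is precisely the case in which \emph{every} cycle of the current length is chordless and the neighbourhood paths furnished by local connectivity always re-enter the cycle in a way that alters the length by more than one, or not at all. Local connectivity is a very weak hypothesis: it controls only the internal structure of each $\langle N(v)\rangle$ and says nothing about how these neighbourhoods are glued together along $C$, so there is no a priori bound on the lengths of the available detours. Every known unconditional result of this type succeeds exactly by importing an extra hypothesis that forces the neighbourhood paths to be short---a chord, a forbidden induced subgraph (as in the claw-free Oberly--Sumner and Clark results), or a degree cap (as in the locally Dirac graphs with bounded maximum degree treated in this paper). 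I expect that a full proof would require a global charging or extremal argument that simultaneously controls all of the neighbourhood detours along a longest cycle, and producing such an argument is the central obstacle; absent it, the realistic target is weak pancyclicity under one of these auxiliary restrictions, which is the direction this paper pursues.
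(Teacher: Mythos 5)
The statement you were asked about is not a theorem of this paper: it is Ryj\'{a}\v{c}ek's conjecture, stated as Conjecture~\ref{ryjacek}, and the paper offers no proof of it. The authors explicitly describe it as ``very difficult to settle,'' and the body of the paper only provides supporting evidence by proving weak pancyclicity (indeed full cycle extendability) under the much stronger auxiliary hypotheses of being locally Dirac with $\Delta \le 11$. So there is no proof in the paper to compare yours against.

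Your proposal, by your own account, is not a proof either, and you have correctly diagnosed why. The preliminary observation is sound: in a locally connected graph every vertex of degree at least $2$ lies on a triangle, since $\langle N(v)\rangle$ is connected on at least two vertices and hence contains an edge, so $g(G)=3$ whenever $G$ contains a cycle, and weak pancyclicity reduces to producing a cycle of each length from $3$ up to $c(G)$. The reduction to a cycle-shortening (or, dually, cycle-extension) step is also the standard frame, and it is essentially the frame the paper uses in Section~3 via non-extendable cycles and Lemmas~\ref{nonextendable1} and~\ref{nonextendable2}. The genuine gap is exactly the one you name in your final paragraph: local connectivity alone gives no control over where the neighbourhood path from $v_{i-1}$ to $v_{i+1}$ inside $\langle N(v_i)\rangle$ re-enters $C$, so the detour may change the cycle length by an arbitrary amount, and no charging or extremal argument closing this gap is known. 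Since the statement remains open, a submission that stops at this obstruction is the honest outcome; what the paper does instead is impose the locally Dirac condition with a degree cap precisely so that Lemma~\ref{nonextendable2}(3) bounds the number of non-neighbours in each $\langle N(v)\rangle$ and forces the detours to be short, which is the ``auxiliary restriction'' route you correctly identify as the realistic target.
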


 Ryj\'{a}\v{c}ek's conjecture seems to be very difficult to settle. Several conditions stronger than local connectedness have been imposed on graphs to obtain results in support of Ryj\'{a}\v{c}ek's conjecture.  Nevertheless, it often remains a difficult problem to decide which of these graphs  are hamiltonian. For example, locally hamiltonian graphs introduced by Skupie\'{n} \cite{S2} need not be hamiltonian. It is shown, for example, in \cite{ad} that there exist infinitely many locally hamiltonian graphs that are not hamiltonian. Moreover, there does not appear to be an easy way of recognizing which locally hamiltonian graphs are in fact hamiltonian. The class of `locally isometric graphs' introduced in \cite{bno}, is a class of graphs satisfying another such local condition. A subgraph $H$ of a graph $G$ is {\em isometric} if $d_H(u,v)=d_G(u,v)$ for all $u,v \in V(H)$.  A graph $G$ is {\em locally isometric} if the subgraph induced by the open neighbourhood of every vertex in $G$ is an isometric subgraph of $G$. It was shown in \cite{bno} that the problem of deciding whether a locally isometric graph is hamiltonian is NP-complete for graphs with maximum degree at most 8.  Locally connected graphs that are sufficiently `locally dense' were introduced in \cite{bno2}. The {\em clustering coefficient} of a vertex in a graph is the proportion of pairs of neighbours of the vertex that are themselves neighbours (see \cite{ws}). The {\em minimum clustering coefficient} of a graph $G$ is the smallest clustering coefficient of its vertices, taken over all  vertices (see \cite{bno2}). It was shown in \cite{bno2}, that even for connected locally connected graphs with minimum clustering coefficient as large as 1/2, hamiltonicity of the graph is not guaranteed. Nevertheless, it was shown that many of these graphs have a rich cycle structure. At the intersection of the locally hamiltonian, locally isometric, and locally connected graphs with minimum clustering coefficient at least $1/2$, lie the `locally Dirac' and 'locally Ore' graphs. We say that a graph $G$ is {\em locally Dirac} if for every $v \in V(G)$, $deg_{\langle N(v) \rangle}(u) \ge deg_G(v)/2$ for all $u \in N(v)$, i.e., the subgraph $\langle N(v) \rangle$ satisfies Dirac's condition for all $v \in V(G)$. Similarly, a graph $G$ is {\em locally Ore} if for every $v \in V(G)$, $deg_{\langle N(v) \rangle}(u) + deg_{\langle N(v) \rangle}(w)\ge deg_G(v)$ for all pairs $u,w$ of non-adjacent vertices in $N(v)$. In contrast with graphs satisfying the Dirac or Ore conditions, we will show that the locally Dirac and Ore graphs may be sparse and yet possess many of the nice properties that graphs with the Dirac and Ore conditions possess.

 Hasratian and Khachatrian in \cite{HK}  showed that if $G$ is closed locally Ore, i.e., if the subgraph induced by the closed neighbourhood of every vertex of $G$ satisfies Ore's condition, then the graph is hamiltonian.

 \begin{theorem} \emph{\cite{HK}} \label{closed_locally_Ore}
 Let $G$ be a graph of order $n \ge 3$. If $\langle N[v] \rangle$ satisfies Ore's condition for all $v \in V(G)$, then $G$ is hamiltonian.
 \end{theorem}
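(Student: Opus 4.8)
The plan is to argue by contradiction using a longest cycle, with the hamiltonicity of the closed neighbourhoods serving as the engine. First I would record the basic consequences of the hypothesis. Since a disconnected graph is never hamiltonian, I take $G$ to be connected. I then claim $\delta(G)\ge 2$: if some vertex $v$ had degree $1$ with unique neighbour $u$, then, because $n\ge 3$ and $G$ is connected, $u$ has a neighbour $w\ne v$ with $w\not\sim v$; applying Ore's condition (Theorem~\ref{Ores_condition}) to the non-adjacent pair $v,w$ inside $\langle N[u]\rangle$ forces $\deg_{\langle N[u]\rangle}(w)\ge \deg_G(u)$, i.e. $w$ is adjacent to every other vertex of $N[u]$, including $v$ --- a contradiction. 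Hence $\delta(G)\ge 2$, so every closed neighbourhood $\langle N[v]\rangle$ has order at least $3$ and, satisfying Ore's condition, is hamiltonian by Theorem~\ref{Ores_condition}. Deleting $v$ from a Hamilton cycle of $\langle N[v]\rangle$ leaves a Hamilton path of $\langle N(v)\rangle$; in particular $G$ is locally connected, and each $\langle N[v]\rangle$ carries a Hamilton cycle that I can exploit.

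Next, suppose for contradiction that $G$ is not hamiltonian and let $C=v_1v_2\cdots v_m v_1$ be a longest cycle, with $m<n$. By connectedness there is a vertex $x\notin C$ adjacent to some $v_i\in C$. Maximality of $C$ yields the usual insertion obstruction: $x$ cannot be adjacent to two consecutive vertices of $C$ (else $x$ could be inserted to lengthen $C$), so in particular $x\not\sim v_{i-1}$ and $x\not\sim v_{i+1}$. Consequently $x$ and $v_{i+1}$ form a non-adjacent pair inside $\langle N[v_i]\rangle$, and Ore's condition there gives
\[ \deg_{\langle N[v_i]\rangle}(x)+\deg_{\langle N[v_i]\rangle}(v_{i+1})\ \ge\ \deg_G(v_i)+1 .\]
Since $\deg_{\langle N[v_i]\rangle}(x)=1+|N(x)\cap N(v_i)|$, this forces $x$ and $v_i$ to share many common neighbours --- at least $\deg_G(v_i)$ minus the number of $N(v_i)$-neighbours of $v_{i+1}$ --- and the symmetric conclusion holds with $v_{i-1}$ in place of $v_{i+1}$.

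The heart of the proof, and the step I expect to be the main obstacle, is to convert this abundance of common neighbours into an actual lengthening of $C$. The idea is that the Hamilton cycle of $\langle N[v_i]\rangle$ supplies a rich family of $v_{i-1}$--$v_{i+1}$ routings through $\{v_i\}\cup N(v_i)$; combining these with the insertion principle and with the crossing conditions that a longest cycle imposes on the neighbours of the component of $G-V(C)$ containing $x$, one aims to produce either two consecutive vertices of $C$ having a common neighbour off $C$, or a local detour replacing the segment $v_{i-1}v_iv_{i+1}$ by a longer $v_{i-1}$--$v_{i+1}$ path that absorbs $x$ --- in either case contradicting the maximality of $C$. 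The delicate part is the bookkeeping: one must track how the neighbours of $v_i$ (and of $x$) are distributed around $C$, rule out the configurations in which every candidate insertion or detour is blocked, and check that the degree-sum surplus furnished by Ore's condition in $\langle N[v_i]\rangle$ is always large enough to defeat such a blocking pattern. Carrying out this case analysis carefully, rather than the setup above, is where the real work lies.
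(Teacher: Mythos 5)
This statement is not proved in the paper at all: it is quoted from Hasratian and Khachatrian \cite{HK}, and the paper only uses it (together with the stronger fact, recorded in Remark \ref{1_2_extendable}, that their argument actually yields $\{1,2\}$-extendability). So the relevant question is whether your argument stands on its own, and it does not.

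Your setup is sound and is the standard opening for such localization theorems: $\delta(G)\ge 2$ follows correctly from Ore's condition in $\langle N[u]\rangle$, each $\langle N[v]\rangle$ is hamiltonian, a longest cycle $C$ admits an off-cycle vertex $x$ with a cycle neighbour $v_i$, maximality forces $x\nsim v_{i\pm 1}$, and Ore's condition applied to the non-adjacent pair $x,v_{i+1}$ in $\langle N[v_i]\rangle$ gives the degree-sum surplus you state. But everything after that is a description of what a proof would have to accomplish, not a proof. You write that one ``aims to produce either two consecutive vertices of $C$ having a common neighbour off $C$, or a local detour \ldots\ that absorbs $x$,'' and then concede that ``carrying out this case analysis carefully \ldots\ is where the real work lies.'' That case analysis \emph{is} the theorem: the entire difficulty is to show that the blocking configurations cannot all occur simultaneously, and nothing in your text rules them out. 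In particular, you never use the Hamilton cycle of $\langle N[v_i]\rangle$ in any concrete way, never specify which reroutings of $C$ are attempted, and never derive a contradiction. As written, the argument establishes only the (easy) preliminary observations; the core of the proof is missing and would need to be supplied in full --- essentially reproducing the construction of \cite{HK}, which in fact extends an arbitrary non-hamiltonian cycle by one or two vertices rather than arguing from a longest cycle.
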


 \begin{remark} \label{1_2_extendable}
 The proof of Theorem \ref{closed_locally_Ore} given in \cite{HK} in fact shows that if $G$ is closed locally Ore and $C$ is a non-hamiltonian cycle, then there exists a cycle $C'$ of length 1 or 2 greater than $C$ that contains the vertices of $C$. Graphs with this property are called $\{1,2\}$-{\em extendable}.
 \end{remark}

 As an immediate consequence we obtain the following.

 \begin{corollary} \label{locally_Ore}
 Let $G$ be a graph of order $n \ge 3$. If for every $v \in V(G)$ and for all $u,w \in N(v)$, $deg_{\langle N(v) \rangle}(u) + deg_{\langle N(v) \rangle}(w) \ge deg_G(v)$, then $G$ is hamiltonian and $\{1,2\}$-extendable.
 \end{corollary}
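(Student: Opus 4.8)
The plan is to deduce the statement from Theorem \ref{closed_locally_Ore} by showing that the (open) locally Ore hypothesis forces the closed locally Ore condition, and then to read off $\{1,2\}$-extendability from Remark \ref{1_2_extendable}. Concretely, I would fix an arbitrary vertex $v \in V(G)$ and verify that $\langle N[v] \rangle$ satisfies Ore's condition; once this holds for every $v$, Theorem \ref{closed_locally_Ore} applies directly to $G$, which has order $n \ge 3$ by hypothesis.

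To carry this out, first note that $\langle N[v] \rangle$ has order $deg_G(v) + 1$, since $N[v] = N(v) \cup \{v\}$ and $|N(v)| = deg_G(v)$. Now take any pair $x, y$ of non-adjacent vertices in $N[v]$. The key observation is that $v$ is adjacent in $G$ to every vertex of $N(v)$, hence to every other vertex of $N[v]$; therefore $v$ cannot belong to a non-adjacent pair, and so both $x$ and $y$ lie in $N(v)$ and are non-adjacent there as well.

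The remaining step is a short degree computation. For each $u \in N(v)$ we have $deg_{\langle N[v] \rangle}(u) = deg_{\langle N(v) \rangle}(u) + 1$, the extra edge being $uv$. Applying this to both $x$ and $y$ and using the locally Ore hypothesis in $\langle N(v) \rangle$ gives
\[
deg_{\langle N[v] \rangle}(x) + deg_{\langle N[v] \rangle}(y) = deg_{\langle N(v) \rangle}(x) + deg_{\langle N(v) \rangle}(y) + 2 \ge deg_G(v) + 2 > deg_G(v) + 1 = |N[v]|,
\]
so Ore's condition holds in $\langle N[v] \rangle$. As $v$ was arbitrary, $G$ is closed locally Ore, and Theorem \ref{closed_locally_Ore} gives that $G$ is hamiltonian while Remark \ref{1_2_extendable} gives that it is $\{1,2\}$-extendable.

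Since the argument is essentially a reduction, I expect no serious obstacle; the only point requiring care is the bookkeeping just described --- in particular observing that $v$ itself is never a member of a non-adjacent pair in $N[v]$, so that the Ore inequality needed in the closed neighbourhood is supplied, with one unit to spare, by the open-neighbourhood hypothesis.
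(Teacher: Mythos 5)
Your proposal is correct and is essentially the paper's own argument: both verify that the open locally Ore hypothesis forces $\langle N[v]\rangle$ to satisfy Ore's condition (via $deg_{\langle N[v] \rangle}(u) = deg_{\langle N(v) \rangle}(u)+1$ and $|N[v]| = deg_G(v)+1$) and then invoke Theorem \ref{closed_locally_Ore} together with Remark \ref{1_2_extendable}. Your explicit remark that $v$ itself never belongs to a non-adjacent pair in $N[v]$ is a small point the paper leaves implicit, but the proofs are otherwise identical.
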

 \begin{proof} Let $v \in V(G)$ and $u,w \in N(v)$. Since $deg_{\langle N[v] \rangle}(u) =deg_{\langle N(v) \rangle}(u)+1$ and $deg_{\langle N[v] \rangle}(w) =deg_{\langle N(v) \rangle}(w)+1$, it follows that $deg_{\langle N[v] \rangle}(u)+deg_{\langle N[v] \rangle}(w)=deg_{\langle N(v) \rangle}(u) + deg_{\langle N(v) \rangle}(w)+2\ge deg_G(v)+2 = |N[v]|+1 > |N[v]|$. Hence $\langle N[v] \rangle$ satisfies Ore's condition for all $v \in V(G)$. By Theorem \ref{closed_locally_Ore} we see that $G$ is hamiltonian and, by Remark \ref{1_2_extendable}, $G$ is $\{1,2\}$-extendable.
 \end{proof}

 The following is another consequence of this result.

 \begin{corollary} \label{locally_Dirac}
 Let $G$ be a graph of order $n \ge 3$. If for every $v \in V(G)$ and for all $u \in N(v)$, $deg_{\langle N(v) \rangle}(u)  \ge deg_G(v)/2$, then $G$ is hamiltonian and $\{1,2\}$-extendable.
 \end{corollary}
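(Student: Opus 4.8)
The plan is to deduce this directly from Corollary \ref{locally_Ore}, by observing that the locally Dirac hypothesis is in fact stronger than the locally Ore hypothesis. The key point is that a Dirac-type lower bound imposed on each individual vertex, when applied to two vertices simultaneously and added, immediately yields the corresponding Ore-type sum bound.

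First I would fix an arbitrary vertex $v \in V(G)$ and consider any pair $u,w$ of non-adjacent vertices in $N(v)$. Applying the locally Dirac hypothesis to $u$ gives $deg_{\langle N(v) \rangle}(u) \ge deg_G(v)/2$, and applying it to $w$ gives $deg_{\langle N(v) \rangle}(w) \ge deg_G(v)/2$. Adding these two inequalities yields $deg_{\langle N(v) \rangle}(u) + deg_{\langle N(v) \rangle}(w) \ge deg_G(v)$, which is precisely the hypothesis of Corollary \ref{locally_Ore}. Since $v$ and the non-adjacent pair $u,w$ were arbitrary, $G$ satisfies the locally Ore condition, and the conclusion that $G$ is hamiltonian and $\{1,2\}$-extendable then follows at once from Corollary \ref{locally_Ore}.

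I do not anticipate any genuine obstacle: the entire content is the elementary inequality above, with no case analysis and no extra hypothesis on $n$ beyond the $n \ge 3$ already inherited. The one subtlety worth flagging is that the Dirac bound must hold for \emph{every} $u \in N(v)$, so in particular it applies to both members of any non-adjacent pair; this is exactly what is used, and it is what makes the reduction to the Ore condition immediate rather than requiring any adjacency considerations within $N(v)$.
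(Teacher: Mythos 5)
Your proof is correct and matches the paper's intent exactly: the paper gives no explicit proof, presenting the corollary as an immediate consequence of Corollary \ref{locally_Ore}, and the intended justification is precisely your observation that summing the Dirac bound over the two vertices of any (non-adjacent) pair in $N(v)$ yields the Ore-type sum condition. Nothing is missing.
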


 The {\em strong product} of two graphs $G$ and $H$, denoted by $G \boxtimes H$,  is the graph with vertex set $V(G \boxtimes H)=V(G) \times V(H)$ and edge set $E(G \boxtimes H) =\{(u,v)(x,y)|~u=x ~and ~vy \in E(H)\} \cup \{(u,v)(x,y)|~v=y~ and~ ux \in E(G)\} \cup \{(u,v)(x,y) |~ux \in E(G)~ and ~ vy \in E(H)\}$.

 The {\em join} of two graphs $G$ and $H$, denoted by $G+H$ is the graph with vertex set $V(G) \cup V(H)$ and edge set $E(G) \cup E(H) \cup \{uv| u \in V(G)~ and ~v \in V(H)\}$.

 Let $u$ and $v$ be vertices of a graph $G$. Then $u \sim v$ is used to indicate that $u$ is adjacent with $v$ and $u \nsim v$ is used to indicate that $u$ is not adjacent with $v$.

\section{Connectedness and Diameter in Locally Ore and Dirac Graphs}

It is easily seen that the diameter of graphs satisfying the Dirac or the Ore condition is at most 2. However, graphs that are locally Dirac can have arbitrarily large diameter.  To see this let $P_m$ be the path of order $m$, $C_m$ be the cycle of order $m$ and $K_3$ the complete graph of order $3$. Then $P_m \boxtimes K_3$ is a locally Dirac graph of order $3m$ and diameter $m-1$ and $C_m \boxtimes K_3$ is  a locally Dirac graph of order $3m$ and diameter $\lfloor m/2 \rfloor$.  Graphs that satisfy the Dirac (or Ore) condition may not be locally Dirac (locally Ore, respectively). For example, for even $n \ge 4$, the complete bipartite graph $K_{n/2,n/2}$ satisfies the Dirac condition (as well as the Ore condition) but it is not locally Dirac (nor locally Ore). However, there are graphs such as regular complete $k$-partite graphs for $k \ge 3$ or the $k^{th}$ power of the cycle $C_{3k}$ for some $k \ge 1$, that satisfy the Dirac condition and are locally Dirac.

One may well ask whether the locally Dirac graphs can be characterized in terms of forbidden (induced) subgraphs. The next results shows that this is not the case.

\begin{proposition} \label{noforbidden_induced_subgraphs}
Every connected graph $G$ of order $n \ge 3$ is an induced subgraph of a locally Dirac graph.
\end{proposition}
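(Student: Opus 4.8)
The plan is to realize $G$ as an induced subgraph of the join $H = G + K_m$, where $K_m$ is a complete graph on $m$ vertices and $m$ is chosen sufficiently large ($m \ge n$ will be more than enough). That $G$ is an induced subgraph of $H$ is immediate: the join adds edges only between $V(G)$ and $V(K_m)$, so $\langle V(G)\rangle = G$ in $H$. It therefore remains to verify that $H$ is locally Dirac, i.e., that $\langle N(v)\rangle$ satisfies Dirac's condition for every $v \in V(H)$. Note the construction is deliberately a join with a clique rather than a strong product: the naive attempt $G \boxtimes K_r$ fails, for instance when $G$ is a star, because a leaf's blown-up clique has too few neighbours inside the neighbourhood of a vertex in the (high-degree) centre's clique.

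First I would record the relevant degrees. For $v \in V(G)$ we have $deg_H(v) = deg_G(v) + m$, while every vertex of $K_m$ is universal in $H$, so $deg_H(v) = n + m - 1$ for $v \in V(K_m)$. The verification splits into four cases according to the type of $v$ and the type of neighbour $u \in N_H(v)$ whose degree inside $\langle N_H(v)\rangle$ is being bounded, each handled by counting $deg_{\langle N_H(v)\rangle}(u)$ directly from the definition of the join and comparing it with $deg_H(v)/2$. When $u$ lies in $V(K_m)$ (whether $v \in V(G)$ or $v \in V(K_m)$), one checks that $u$ is adjacent to every other vertex of $N_H(v)$, whence $deg_{\langle N_H(v)\rangle}(u) = |N_H(v)| - 1 \ge |N_H(v)|/2$ as soon as $|N_H(v)| \ge 2$. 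The case $v \in V(G)$, $u \in N_G(v)$ is also routine: here $u$ is joined to all of $K_m$, so $deg_{\langle N_H(v)\rangle}(u) = m + |N_G(u)\cap N_G(v)|$, and the desired inequality reduces to $m + 2|N_G(u)\cap N_G(v)| \ge deg_G(v)$, which holds whenever $m \ge \Delta(G)$.

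The main obstacle is the remaining case: $v \in V(K_m)$ and $u \in V(G)$, especially when $u$ has small degree in $G$ (e.g. a leaf). Here $N_H(v) = V(G) \cup (V(K_m)\setminus\{v\})$, and $u$ sees inside $\langle N_H(v)\rangle$ only its $deg_G(u)$ neighbours from $G$ together with the $m-1$ remaining clique vertices, so $deg_{\langle N_H(v)\rangle}(u) = deg_G(u) + m - 1$, which must be at least $(n + m - 1)/2$. This reduces to $m \ge n + 1 - 2\,deg_G(u)$, and since $G$ is connected of order $n \ge 3$ we have $deg_G(u) \ge 1$, so the requirement is at worst $m \ge n - 1$. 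Thus choosing $m \ge n$ simultaneously satisfies all four cases, and $H = G + K_m$ is a locally Dirac graph containing $G$ as an induced subgraph, which proves the proposition.
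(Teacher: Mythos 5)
Your construction $H=G+K_m$ with $m\ge n$ is exactly the paper's approach (the paper takes $H=G+K_n$ and simply asserts the conclusion), and your case-by-case verification of the local Dirac condition is correct; the binding case $v\in V(K_m)$, $u\in V(G)$ indeed only needs $m\ge n+1-2\deg_G(u)\le n-1$. Nothing further is required.
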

\begin{proof} Let $H=G + K_n$. Then $H$ is a locally Dirac graph that contains $G$ as an induced subgraph.
\end{proof}

\medskip

The next result gives a sharp lower bound on the connectivity of a connected locally Dirac graph.

\medskip

\begin{theorem} \label{connectivity_loc_dirac}
If $G$ is connected locally Ore graph of order $n \ge 4$, then $G$ is $3$-connected.
\end{theorem}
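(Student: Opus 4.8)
The plan is to argue by contradiction, exploiting the fact that across a small cut the neighbours of a cut vertex that lie in different components are forced to be non-adjacent, which makes the local Ore inequality at that vertex untenable. First I would record that, by Corollary~\ref{locally_Ore}, $G$ is hamiltonian; since $n\ge 3$, a hamiltonian graph is $2$-connected, so $\kappa(G)\ge 2$. It therefore suffices to rule out the existence of a vertex cut of size exactly $2$.

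Suppose then that $G$ is not $3$-connected, so that $\kappa(G)=2$ and there is a minimum vertex cut $S=\{x,y\}$. Let $C_1,C_2,\dots$ be the components of $G-S$, of which there are at least two. Because $G$ is $2$-connected and $S$ is a minimum cut, each of $x$ and $y$ has a neighbour in every component of $G-S$: indeed, if $x$ had no neighbour in some component $C_i$, then $C_i$ would be joined to the rest of $G$ only through $y$, making $\{y\}$ a cut vertex and contradicting $2$-connectivity. In particular I can choose $a\in N(x)\cap C_1$ and $b\in N(x)\cap C_2$. Since $C_1$ and $C_2$ are distinct components of $G-\{x,y\}$, an edge $ab$ would join them in $G-S$; hence $a\nsim b$, and $\{a,b\}$ is a non-adjacent pair in $N(x)$.

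Now I would apply the local Ore condition at $x$ to this pair. Write $N_i=N(x)\cap C_i$. Since $a\in C_1$ is adjacent in $G$ only to vertices of $C_1\cup\{x,y\}$, its neighbours inside $\langle N(x)\rangle$ lie in $(N_1\setminus\{a\})\cup\{y\}$; thus $\deg_{\langle N(x)\rangle}(a)\le |N_1|-1$ when $y\notin N(x)$ and $\deg_{\langle N(x)\rangle}(a)\le |N_1|$ when $y\in N(x)$, and the analogous bounds hold for $b$ with $N_2$. On the other hand, as $N_1,N_2$ are disjoint subsets of $N(x)$ avoiding $y$, we have $\deg_G(x)=|N(x)|\ge |N_1|+|N_2|$ when $x\nsim y$ and $\deg_G(x)\ge |N_1|+|N_2|+1$ when $x\sim y$. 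Substituting into the local Ore inequality $\deg_{\langle N(x)\rangle}(a)+\deg_{\langle N(x)\rangle}(b)\ge \deg_G(x)$ gives, in the case $x\nsim y$, the absurdity $(|N_1|-1)+(|N_2|-1)\ge |N_1|+|N_2|$, and in the case $x\sim y$, the absurdity $|N_1|+|N_2|\ge |N_1|+|N_2|+1$. In either case we reach a contradiction, so no $2$-cut exists; hence $\kappa(G)\ge 3$, and since $n\ge 4$ the graph $G$ is $3$-connected.

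I expect the main obstacle to be the bookkeeping in the last step: the bound on $\deg_{\langle N(x)\rangle}(a)$ must correctly absorb the single extra potential neighbour $y$, and one must verify that this at-most-one extra edge per vertex can never compensate for the two edges ``lost'' because $a$ and $b$ are not adjacent to themselves within their own components while the Ore sum demands the full degree of $x$. A secondary point worth stating carefully is the standard structural fact that in a $2$-connected graph each endpoint of a minimum $2$-cut has a neighbour in every component of its removal, since this is exactly what guarantees that the vertices $a$ and $b$ can be selected.
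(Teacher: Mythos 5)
Your proof is correct and follows essentially the same route as the paper's: after disposing of $2$-connectivity, you pick neighbours of one cut vertex in two different components, note they are non-adjacent, and split into the two cases according to whether the cut vertices are adjacent, deriving the same contradiction with the local Ore inequality. The only cosmetic difference is that you justify $2$-connectivity via hamiltonicity where the paper calls it ``readily seen.''
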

\begin{proof} It is readily seen that a connected locally Ore graph of order at least $4$ is $2$-connected.
Suppose, to the contrary, that $G$ has a vertex-cut $S$ of cardinality 2, where $S = \{ u, v \}$. Let $ C_1, C_2,  \ldots, C_k$, $k \ge 2$, be the components of $G-S$.  Consider the sets $N(v) \cap V(C_i)$ and let $d = deg_G(v)$. Observe that each of these sets is non-empty otherwise $u$ is a cut-vertex of $G$. Let $x \in N(v) \cap V(C_1)$ and $y \in N(v) \cap V(C_2)$. We consider two cases.\\
\textbf{Case 1.} If $uv \in E(G)$, then $deg_{\langle N(v) \rangle}(x) \le (|N(v) \cap V(C_1)|-1)+1=|N(v) \cap V(C_1)|$. Also $deg_{\langle N(v) \rangle}(y) \le |N(v) \cap V(C_2)| \le d-|N(v) \cap V(C_1)|-1$. So $deg_{\langle N(v) \rangle}(x)+deg_{\langle N(v) \rangle}(y) < d$, a contradiction.

\noindent\textbf{Case 2.}  If $uv \notin E(G)$, then $d \ge |N(v) \cap V(C_1)| + |N(v) \cap V(C_2)|$.
However, $deg_{\langle N(v) \rangle}(x) \le |N(v) \cap V(C_1)| - 1$ and $deg_{\langle N(v) \rangle}(y) \le |N(v) \cap V(C_2)| - 1$. So $deg_{\langle N(v) \rangle}(x)+deg_{\langle N(v) \rangle}(y) < d$, a contradiction.
\end{proof}

\medskip

An immediate consequence of the previous result now follows.

\medskip

\begin{corollary}
If $G$ is a connected locally Dirac graph of order at least $4$, then $G$ is $3$-connected.
\end{corollary}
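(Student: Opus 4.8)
The plan is to deduce this corollary directly from Theorem~\ref{connectivity_loc_dirac} by showing that the locally Dirac hypothesis is a strengthening of the locally Ore hypothesis. Concretely, I would first establish that every locally Dirac graph is locally Ore, and then invoke Theorem~\ref{connectivity_loc_dirac} verbatim.

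The verification rests on the elementary observation that, for any fixed graph $H$, Dirac's condition implies Ore's condition: if $\delta(H) \ge |V(H)|/2$, then for any two vertices $u,w$ (whether adjacent or not) we have $deg_H(u) + deg_H(w) \ge |V(H)|/2 + |V(H)|/2 = |V(H)|$, so in particular Ore's condition holds. I would apply this with $H = \langle N(v) \rangle$, which has order $deg_G(v)$. The locally Dirac hypothesis gives $deg_{\langle N(v) \rangle}(u) \ge deg_G(v)/2$ for every $u \in N(v)$; summing this inequality over a non-adjacent pair $u,w \in N(v)$ yields $deg_{\langle N(v) \rangle}(u) + deg_{\langle N(v) \rangle}(w) \ge deg_G(v)$, which is precisely the locally Ore condition at $v$. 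Since $v$ was arbitrary, $G$ is locally Ore. The corollary then follows at once: $G$ is a connected locally Ore graph of order at least $4$, so Theorem~\ref{connectivity_loc_dirac} shows $G$ is $3$-connected.

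Because this is a one-line implication between two local degree conditions, there is no genuine obstacle here; the only point deserving a word of care is that the locally Ore definition demands the sum bound only for non-adjacent pairs in $N(v)$, whereas the locally Dirac bound holds for \emph{all} vertices of $\langle N(v) \rangle$, so the implication goes through with room to spare (it would in fact hold for adjacent pairs as well). This is why the result is stated as an immediate consequence of the preceding theorem rather than proved afresh.
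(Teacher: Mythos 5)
Your proposal is correct and is exactly the argument the paper leaves implicit when it calls the corollary an ``immediate consequence'' of Theorem~\ref{connectivity_loc_dirac}: the locally Dirac condition at a vertex $v$ says every vertex of $\langle N(v)\rangle$ has degree at least $\deg_G(v)/2 = |N(v)|/2$, so summing over any pair gives the locally Ore inequality, and the theorem applies. Nothing further is needed.
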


\medskip

To see that the bound in the previous two results is sharp, observe that the graph $P_m \boxtimes K_3$, for $m \ge 3$, is a connected locally Ore/Dirac graph with connectivity $3$. If we add a new vertex to $P_m \boxtimes K_3$ and join it to three pairwise adjacent vertices of degree $5$ in $P_m \boxtimes K_3$, we obtain a locally Ore graph with minimum degree $3$.  In the next result shows that three cannot be the minimum degree of locally Dirac graphs of sufficiently large order.

\begin{theorem} \label{mindegree_loc_dirac}
If $G$ is a connected locally Dirac graph or order $n \ge 8$, then $\delta(G) \ge 5$.
\end{theorem}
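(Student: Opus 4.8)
The plan is to begin with the corollary preceding this theorem, which gives that $G$ is $3$-connected, so $\delta(G)\ge \kappa(G)\ge 3$; it therefore suffices to rule out $\delta(G)=3$ and $\delta(G)=4$, in each case deriving a contradiction with either $3$-connectivity or the hypothesis $n\ge 8$. The one tool I will use repeatedly is the identity, valid for every edge $uv$, that $deg_{\langle N(v)\rangle}(u)=|N(u)\cap N(v)|=deg_{\langle N(u)\rangle}(v)$, since the neighbours of $u$ inside $N(v)$ are exactly the common neighbours of $u$ and $v$. Applying the local Dirac condition at $u$ and at $v$ yields $|N(u)\cap N(v)|\ge \max\{deg_G(u),deg_G(v)\}/2$, and in particular $deg_G(u)\le 2\,deg_{\langle N(v)\rangle}(u)$; this turns a scarcity of common neighbours into a degree upper bound on the neighbours of a low-degree vertex.

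To exclude $\delta(G)=3$, let $v$ have degree $3$ with $N(v)=\{a,b,c\}$. Dirac's condition on the three-vertex graph $\langle N(v)\rangle$ forces every degree there to be at least $3/2$, hence equal to $2$, so $\langle N(v)\rangle=K_3$ and $\langle N[v]\rangle=K_4$. The displayed bound then gives $deg_G(a),deg_G(b),deg_G(c)\le 4$, so each of $a,b,c$ has at most one neighbour outside $\{v,a,b,c\}$. If none does, then $G=K_4$, contradicting $n\ge 8$. Otherwise an external neighbour $a'$ of $a$ satisfies $a'\nsim v$, so the local Dirac condition at $a$ forces $a'\sim b$ and $a'\sim c$; since $b$ and $c$ each have at most one external neighbour, $a'$ must be that neighbour for both. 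Then every edge leaving $\{v,a,b,c\}$ ends at $a'$, making $a'$ a cut-vertex and contradicting $3$-connectivity.

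To exclude $\delta(G)=4$, let $v$ have degree $4$. As $\langle N(v)\rangle$ satisfies Dirac's condition it has minimum degree at least $2$, so $\langle N(v)\rangle\in\{C_4,\,K_4-e,\,K_4\}$. In the $C_4$ case each neighbour shares exactly two neighbours with $v$, hence has degree at most $4$; since by the previous step $G$ has no vertex of degree $3$, each neighbour has degree exactly $4$ with a single external neighbour, and the local Dirac condition (each such external neighbour being non-adjacent to $v$) forces these four external neighbours to collapse into a single vertex $w$ adjacent to all of $N(v)$. Then $v$ and $N(v)$ reach the rest of $G$ only through $w$, so $w$ is a cut-vertex, a contradiction. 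The diamond and $K_4$ cases proceed in the same spirit: I track the external neighbours of the vertices of $N(v)$ and use the local Dirac condition to show that each external neighbour must attach to several vertices of $N(v)$ at once. Because those vertices have tightly bounded degree (again through their common neighbours with $v$), their total supply of external edges is small, and exhausting it forces a separating set of at most two vertices, a vertex of degree $3$, or $n\le 7$ — each a contradiction.

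I expect the $K_4$ sub-case, where $\langle N[v]\rangle=K_5$, to be the main obstacle: here a neighbour of $v$ may have degree as large as $6$, so its external neighbours are not pinned down as rigidly as in the $C_4$ case. The quantitative lever I plan to use is that an external neighbour of a degree-$5$ vertex of $N(v)$ is forced by the local Dirac condition to be adjacent to all of $N(v)$, so that at most two such ``full'' external vertices can occur before the external-edge budget of $N(v)$ is spent; converting this into an explicit separator of order at most $2$ (or the conclusion $n\le 7$), while correctly disposing of the degree-$6$ possibilities, is the step I anticipate to be the most delicate part of the argument.
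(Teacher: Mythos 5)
Your $\delta=3$ case and the $C_4$ sub-case of $\delta=4$ are complete and correct, and your basic lever --- that $deg_{\langle N(v)\rangle}(u)=|N(u)\cap N(v)|$, so the local Dirac condition at $u$ gives $deg_G(u)\le 2\,|N(u)\cap N(v)|$ --- is exactly the right one. The genuine gap is that the diamond and $K_4$ sub-cases of $\delta(G)=4$ are never actually carried out: you describe a plan and yourself flag the $K_4$ case as an unresolved ``delicate part,'' so as written the exclusion of $\delta(G)=4$ is incomplete. The missing step does close, and more easily than you anticipate, by counting edges between $N(v)$ and $N_2(v)$ rather than by exhibiting an explicit separator. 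First note $|N_2(v)|\ge 3$: otherwise $N_2(v)$ is a cut of size at most $2$ separating $N[v]$ from the rest, or else $n\le 7$. In the $K_4$ sub-case each $v_i$ has $|N(v_i)\cap N(v)|=3$, hence $deg_G(v_i)\le 6$ and at most two external neighbours, so at most $8$ edges leave $N(v)$ toward $N_2(v)$. On the other hand, if $w\in N_2(v)$ is adjacent to $v_i$, then $deg_G(v_i)\in\{5,6\}$ and the local Dirac condition in $\langle N(v_i)\rangle$ forces $w$ to have at least $\lceil deg_G(v_i)/2\rceil=3$ neighbours in $N(v_i)$, none of which is $v$; hence $w$ has at least $3$ neighbours in $N(v)$. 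This gives at least $3|N_2(v)|\ge 9>8$ such edges, a contradiction. The identical count, with external-edge budgets $6$ and $4$, disposes of the diamond and $C_4$ sub-cases as well, so the case split on $\langle N(v)\rangle$ becomes unnecessary.

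For comparison, the paper does not case on the isomorphism type of $\langle N(v)\rangle$ at all. It first shows that some $v_i$ must have at least two neighbours in $N_2(v)$ (else a vertex of $N_2(v)$ attached to only one $v_i$ has degree $0$ in $\langle N(v_i)\rangle$), deduces that every $v_i$ then has at most two neighbours in $N_2(v)$, concludes from $|N_2(v)|\ge 3$ that some $z\in N_2(v)$ has at most two neighbours in $N(v)$, and derives the contradiction locally at a neighbour $v_i$ of $z$. Your decomposition by the structure of $\langle N(v)\rangle$ is a legitimate alternative route, but to stand as a proof it must include the diamond and $K_4$ computations you deferred.
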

\begin{proof}
Since $n \ge 8$, it follows from Theorem \ref{connectivity_loc_dirac} that $\delta(G) \ge 3$. Let $v$ be a vertex of degree $\delta(G)$ and let $N_2(v)$ consist of all vertices distance exactly $2$ from $v$. If $\delta(G) < 5$, then $\delta(G)=3$ or $4$.

Assume first that $\delta(G) =3$ and let $N(v) =\{x,y,z\}$. Since $G$ is locally Dirac, $N(v)$ induces a $K_3$. By Theorem \ref{connectivity_loc_dirac} every vertex of $N(v)$ is adjacent with at least one vertex of $N_2(v)$. If some vertex of $N(v)$, say $x$ is adjacent with at least two vertices of $N_2(v)$, then it follows, since $G$ is locally Dirac, that $deg_{\langle N(x) \rangle}(v) \ge \lceil 5/2 \rceil =3$. This is not possible since $v$ has at most two neighbours in $\langle N(x) \rangle$. So each vertex of $N(v)$ is adjacent with exactly one vertex in $N_2(v)$. Let $u$ be a neighbour of $x$ in $N_2(v)$. Since $G$ is locally Dirac, $u$ must be adjacent with both $y$ and $z$. But then $G$ has order $5$, a contradiction. So $\delta(G) \ne 3$.

Assume next that $\delta(G) =4$. Let  $N(v) = \{v_1, v_2, v_3, v_4\}$. Since $n \ge 8$ and by Theorem \ref{connectivity_loc_dirac} we must have $|N_2(v)| \ge 3$. Suppose first that each vertex from $N(v)$ is adjacent to at most one vertex from $N_2(v)$. Then there is a vertex $a \in N_2(v)$ such that $a$ is adjacent to exactly one vertex of $N(v)$; otherwise each vertex from $N_2(v)$  has at least two neighbours in $N(v)$, which contradicts our assumption that  each vertex from $N(v)$ is adjacent to at most one vertex from $N_2(v)$. We may assume that $v_1a \in E(G)$ and that $a$ is not adjacent to any of $v_2, v_3, v_4$. Then $deg_{\langle N(v_1) \rangle}(a) = 0$. Since $G$ is locally Dirac and $|N(v_1)| \ge 4$, this is not possible. Therefore, there is a vertex in $N(v)$, say $v_1$, that is adjacent to at least two vertices in $N_2(v)$, say $a$ and $b$. Then $|N(v_1)| \ge 5$, and thus $deg_{\langle N(v_1) \rangle} (v) \ge 3$, which implies that $v_1$ is adjacent with every vertex of $\{v_2,v_3, v_4\}$. So $|N(v_1)| \ge 6$.  The vertex $v_1$ cannot be adjacent to any other vertices, because $|N(v_1)| \ge 7$ would imply  $deg_{\langle N(v_1) \rangle} (v) \ge 4$, which is impossible. Similarly each of $v_2, v_3$, and $v_4$ is adjacent to at most two vertices in $N_2(v)$. This implies, since $|N_2(v)| \ge 3$, that there is a vertex in $N_2(v)$ adjacent to at most two vertices from $N(v)$. Suppose that $z$ is such a vertex and that $zv_i \in E(G)$, for some $i, 1 \le i \le 4$. If $v_i$ has no other neighbours in $N_2(v)$ except $z$, then $|N(v_i)| \ge 4$ but  $deg_{\langle N(v_i) \rangle} (z) \le 1$, so $G$ is not locally Dirac.  If $v_i$ has another neighbour in $N_2(v)$, then $|N(v_i)| \ge 5$, but   $deg_{\langle N(v_i) \rangle} (z) \le 2$, so $G$ is not locally Dirac.
 \end{proof}

\begin{remark}
There are infinitely many planar closed locally Dirac graphs. For example, the graphs $P_m \boxtimes K_2$, for $m \ge 3$, forms such a class of graphs.
\end{remark}

\medskip

For Locally Dirac graphs the situation is different as our next result shows. We will use the result established in \cite{CP} which states that every locally $3$-connected graph is non-planar.

\begin{theorem} \label{loc_Dirac_non_planar}
Every locally Dirac graph of order $n \ge 8$ is non-planar.
\end{theorem}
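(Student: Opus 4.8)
The plan is to show that for every vertex $v$ the closed neighbourhood $\langle N[v]\rangle$ is non-planar; since $\langle N[v]\rangle$ is an induced subgraph of $G$, this immediately forces $G$ itself to be non-planar. The engine behind the quoted result of \cite{CP} is the elementary fact that the cone $K_1 + H$ is planar if and only if $H$ is outerplanar. As $\langle N[v]\rangle = K_1 + \langle N(v)\rangle$, with the apex playing the role of $v$, it therefore suffices to prove that $\langle N(v)\rangle$ is \emph{non-outerplanar} for a single (indeed for every) vertex $v$.

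To obtain the degree leverage I would first invoke Theorem \ref{mindegree_loc_dirac}: since $n \ge 8$ we have $\delta(G) \ge 5$. Fix any $v$ and set $H = \langle N(v)\rangle$, a graph on $m = deg_G(v) \ge 5$ vertices. Because $G$ is locally Dirac, every vertex of $H$ has degree at least $deg_G(v)/2 \ge 5/2$ in $H$, and since degrees are integers this gives $\delta(H) \ge 3$.

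The key structural step is then the standard observation that every outerplanar graph contains a vertex of degree at most $2$ (maximal outerplanar graphs are polygon triangulations, which always have ``ears'' of degree $2$, and an arbitrary outerplanar graph is a subgraph of such a triangulation). Hence a graph of minimum degree at least $3$ cannot be outerplanar, so $H = \langle N(v)\rangle$ is non-outerplanar. Applying the cone characterisation to $\langle N[v]\rangle = K_1 + H$ then shows that $\langle N[v]\rangle$, and therefore $G$, is non-planar.

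I expect the main obstacle to be the temptation to prove the stronger statement that $G$ is locally $3$-connected and then quote \cite{CP} verbatim. This route does not go through: the locally Dirac condition only forces $\delta(\langle N(v)\rangle) \ge deg_G(v)/2$, and when $deg_G(v)$ is even this is precisely the threshold for $2$-connectivity, so $\langle N(v)\rangle$ can genuinely possess a $2$-cut (for instance two cliques of order $deg_G(v)/2 - 1$ completely joined to a common pair of vertices, a configuration that does arise in locally Dirac graphs of order $\ge 8$). The correct fix is to realise that $3$-connectivity is not actually needed: non-outerplanarity of one neighbourhood already triggers the cone argument, and non-outerplanarity is guaranteed cheaply by the bound $\delta(\langle N(v)\rangle)\ge 3$ supplied by Theorem \ref{mindegree_loc_dirac}.
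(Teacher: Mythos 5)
Your proof is correct, and it takes a genuinely different and more economical route than the paper's. Both arguments lean on Theorem \ref{mindegree_loc_dirac} to get $\delta(G)\ge 5$, and both ultimately locate the non-planarity inside a single closed neighbourhood; but from there the paper proceeds by cases. It first disposes of the situation where every $\langle N(v)\rangle$ is $3$-connected by quoting the result of \cite{CP}, and then, for a neighbourhood $H=\langle N(u)\rangle$ with a $2$-cut $S$, it uses the Dirac condition to show that the smaller side of the cut is a clique of exactly $\frac{d-2}{2}$ vertices, each joined to both vertices of $S$ and to $u$, from which it extracts an explicit $K_{3,3}$ when $d\ge 8$ and a subdivision of $K_5$ when $d=6$. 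You bypass the case split entirely: $\delta(G)\ge 5$ and the locally Dirac condition give $\delta(\langle N(v)\rangle)\ge\lceil 5/2\rceil=3$, outerplanar graphs always contain a vertex of degree at most $2$, so $\langle N(v)\rangle$ is not outerplanar, and the cone characterisation ($K_1+H$ planar iff $H$ outerplanar) makes $\langle N[v]\rangle$, hence $G$, non-planar. Your observation in effect subsumes the \cite{CP} lemma (local $3$-connectivity is used there only to force minimum local degree $3$), and your diagnosis that one cannot simply upgrade to local $3$-connectivity is exactly right --- the paper's own second case exists because such $2$-cuts do occur. What the paper's longer argument buys is explicit Kuratowski subgraphs and some structural information about $2$-cuts in Dirac neighbourhoods; what yours buys is a one-paragraph uniform proof. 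The only caveat, which applies equally to the paper's proof, is that the statement implicitly assumes $G$ connected (as in Theorem \ref{mindegree_loc_dirac}), since, e.g., $K_3\cup K_5$ is a planar locally Dirac graph of order $8$.
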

\begin{proof} If $\langle N(v) \rangle$ is $3$-connected for all $v \in V(G)$, then the results follows from the above. Suppose now that $G$ contains a vertex $u$ such that $H=\langle N(u) \rangle$ is not $3$-connected. Since $\delta(G) \ge 5$ and as $H=\langle N(u) \rangle$  satisfies the Dirac condition, $H$ has a hamilton cycle and is thus $2$-connected. Let $S=\{x,y\}$ be a $2$-vertex cut of $H$. Let $H_1$ be a component of $H-S$ of smallest order. Then $H_1$ has at most $\frac{d-2}{2}$ vertices. Since $G$ is locally Dirac, the vertices of $H_1$ necessarily induce a complete graph and are all adjacent (in $H$ and hence in $G$) with every vertex of $S$ and have degree exactly $\frac{d}{2}$ in $H$. Hence $d$ is even. If $d \ge 8$, then the subgraph induced by any three vertices of $H_1$ and $S \cup \{u\}$ contains a $K_{3,3}$ as subgraph. So $G$ is non-planar. If $d=6$, then $H-S$ has two components both with two (adjacent) vertices. So $H$ contains a subdivision of $K_4$ which together with $u$ yields a subdivision of $K_5$. So $G$ is non-planar.
\end{proof}

\medskip

Recall that the {\em eccentricity} of a vertex $v$ in a connected graph $G$ is $e(v)= max\{d(v,u)| u \in V(G)\}$ and the {\em diameter} is the maximum eccentricity among all pairs of vertices. Our next result provides a sharp upper bound on the diameter of a locally Dirac graph.

\begin{theorem} \label{diameter}
If $G$ is a connected locally Dirac graph of order $n \ge 9$, then $diam(G) \le \lfloor\frac{n}{3}\rfloor -1$. Moreover this bound is sharp.
\end{theorem}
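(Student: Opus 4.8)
The plan is to deduce the bound from two structural facts that are already in hand, so that the full strength of the local condition is not needed: $G$ is $3$-connected (by the corollary to Theorem~\ref{connectivity_loc_dirac}) and $\delta(G)\ge 5$ (Theorem~\ref{mindegree_loc_dirac}, which applies since $n\ge 9>8$). Set $D=diam(G)$, choose a vertex $v_0$ of eccentricity $D$ and a vertex $w$ with $d(v_0,w)=D$, and partition $V(G)$ into the distance layers $L_j=\{x\in V(G):d(v_0,x)=j\}$ for $0\le j\le D$. Every edge of $G$ joins two vertices in the same layer or in consecutive layers, $L_0=\{v_0\}$, each layer is non-empty, and $n=\sum_{j=0}^{D}|L_j|$. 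The goal reduces to showing $n\ge 3D+3$ whenever $D\ge 3$.

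First I use $3$-connectivity to control the interior layers. For $1\le j\le D-1$ the set $L_j$ separates $v_0$ from $w$: since $v_0\in L_0$ and $w\in L_D$ lie on opposite sides of $L_j$ and edges run only within a layer or between consecutive layers, every $v_0$--$w$ path must meet $L_j$. Hence $G-L_j$ is disconnected and $|L_j|\ge\kappa(G)\ge 3$. Next I use the minimum degree to fatten the two ends: since $N[v_0]\subseteq L_0\cup L_1$ we get $|L_0|+|L_1|\ge |N[v_0]|\ge\delta(G)+1\ge 6$, and since $N[w]\subseteq L_{D-1}\cup L_D$ we likewise get $|L_{D-1}|+|L_D|\ge 6$.

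Here lies the one delicate point. Adding the layer bounds individually yields only $n\ge 1+5+3(D-2)+1=3D+1$, which is off by one for $n\not\equiv 0\pmod 3$ and fails to produce the claimed floor. The fix is to count the two endpoint closed neighbourhoods as blocks rather than summing $|L_0|,|L_1|$ and $|L_{D-1}|,|L_D|$ separately: when $D\ge 3$ the index sets $\{0,1\}$, $\{2,\dots,D-2\}$ and $\{D-1,D\}$ are pairwise disjoint, so
\[
n\;\ge\;(|L_0|+|L_1|)\;+\;\sum_{j=2}^{D-2}|L_j|\;+\;(|L_{D-1}|+|L_D|)\;\ge\;6+3(D-3)+6\;=\;3D+3,
\]
giving $D\le n/3-1$ and hence $D\le\lfloor n/3\rfloor-1$. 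For $D\le 2$ the inequality is immediate, since $n\ge 9$ forces $\lfloor n/3\rfloor-1\ge 2\ge D$. Thus the main work is recognising that the two missing vertices must be harvested from the endpoints, and verifying that the block decomposition is a genuine partition (which is exactly where $D\ge 3$ is needed).

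For sharpness I exhibit extremal graphs. The family $P_m\boxtimes K_3$ ($m\ge 3$) is locally Dirac of order $n=3m$ and diameter $m-1=\lfloor n/3\rfloor-1$, so the bound is attained for every $n\equiv 0\pmod 3$; in the layering above each interior layer is a triangle and each end contributes exactly six vertices, so every inequality used in the argument is tight. For $n\equiv 1,2\pmod 3$ one obtains equality by enlarging a single interior layer of $P_m\boxtimes K_3$ to a $K_4$ or $K_5$ (joined completely to its two neighbouring triangles), which raises the order by one or two without changing the diameter while preserving the locally Dirac property.
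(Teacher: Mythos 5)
Your main argument is correct and is essentially the paper's proof: the same distance layering from an eccentric vertex, $|L_j|\ge 3$ for interior layers from $3$-connectivity, $|L_0|+|L_1|\ge 6$ and $|L_{D-1}|+|L_D|\ge 6$ from $\delta\ge 5$, and the same block count $n\ge 3D+3$ (the paper writes it as $n-12\ge 3(d-3)$). The sharpness witness $P_m\boxtimes K_3$ also matches the paper, which only exhibits equality for $n\equiv 0\pmod 3$.

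Your additional claim for $n\equiv 1,2\pmod 3$ is, however, false as stated: enlarging one interior layer of $P_m\boxtimes K_3$ to a $K_4$ destroys the locally Dirac property. If layer $j$ is enlarged and $v$ is a vertex of layer $j-1$ (with $j-1$ interior), then $\deg(v)=2+3+4=9$, but a neighbour $u$ of $v$ in layer $j-2$ has only $2+2=4<\lceil 9/2\rceil$ neighbours inside $\langle N(v)\rangle$, since $u$ sees none of layer $j$. This does not affect the validity of the theorem as stated (the paper's sharpness claim is already covered by the $n=3m$ family), but you should either delete that sentence or supply a construction that actually remains locally Dirac.
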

\begin{proof}
If $G$ has diameter at most $2$, the result follows. Suppose $G$ has diameter at least $3$. Let $v$ be a vertex of $G$ such that $e(v) = diam(G)=d$. For each $i$, $0 \le i \le d$, let $V_i$ be the set of all vertices distance $i$ from $v$. By Theorem \ref{mindegree_loc_dirac}, $|V_0 \cup V_1| \ge 6$ and $|V_{d-1} \cup V_d| \ge 6$. By Theorem \ref{connectivity_loc_dirac}, $|V_i| \ge 3$ for $1 \le i < d$. So $n-12 \ge 3(d-3)$, i.e. $d \le \frac{n}{3} -1$.

This bound  is sharp since the graph $G = P_m \boxtimes K_3$ of order $n = 3m$ satisfies the condition $diam(G) = \frac{n}{3}-1$.
\end{proof}

\begin{remark}
If $G$ is locally Ore, then $diam(G) \le \lfloor \frac{n+1}{3} \rfloor$. Moreover, this bound is attained for every integer $n \ge 9$. Observe that $n$ is of the form $3k$ or $3k+1$ or $3k+2$ for some integer $n \ge 3$. If $n=3k$ or $3k+1$, start by taking a copy of $P_{k-1} \boxtimes K_3$. This graph contains two sets $S_1$ and $S_2$ of disjoint $K_3$'s whose vertices all have degree 5 in $G$. If $n=3k$, join one new vertex to one of these two sets of vertices and a $K_2$ to the other set to produce a locally Ore graph with the desired diameter. If $n=3k+1$, join a $K_2$ to the vertices of $S_1$ and join another $K_2$ to the vertices in $S_2$. If $n=3k+2$, start by constructing a $P_k \boxtimes K_3$. Again let $S_1$ and $S_2$ denote two disjoint sets of vertices that induce a $K_3$ and have degree $5$ in $P_k \boxtimes K_3$. Now add two new vertices and join one of them to the vertices of $S_1$ and the other to the vertices of $S_2$. In each case the resulting graph is locally Ore with diameter $\lfloor \frac{n+1}{3} \rfloor$.
\end{remark}

\medskip

It is well-known that $\lambda(G) \le \delta(G)$ and Plesn\'{i}k \cite{Pl} showed that equality holds for graphs with diameter at most $2$.  We show that this is also the case for locally Dirac graphs but that this result does not extend to graphs that are locally Ore and hence not to graphs that are closed locally Ore.

\begin{theorem}\label{edge_connectivity_locally_Dirac}
If $G$ is a connected locally Dirac graph of order $n \ge 3$, then $\lambda(G) = \delta(G)$.
\end{theorem}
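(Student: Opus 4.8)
Since $\lambda(G)\le\delta(G)$ holds for every graph, the plan is to establish the reverse inequality $\lambda(G)\ge\delta(G)$. I would argue by contradiction: assume $\lambda(G)<\delta(G)$ and let $F=[X,\overline{X}]$ be a minimum edge cut, where $X$ is the smaller side and $\overline{X}=V(G)\setminus X$. For a vertex $w$ incident with an edge of $F$ write $c(w)$ for the number of edges of $F$ at $w$ (its ``crossing degree''). The trivial cut is dealt with immediately: if $|X|=1$, say $X=\{v\}$, then $F$ is the set of all edges at $v$, so $\lambda=deg_G(v)\ge\delta$, contradicting $\lambda<\delta$. Hence I may assume $|X|\ge 2$, and by symmetry $|\overline{X}|\ge 2$.

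The key structural step uses the minimality of $F$ to control crossing degrees. Call a boundary vertex $w$ \emph{heavy} if $c(w)>deg_G(w)/2$. If some $w\in X$ were heavy, moving $w$ across replaces $F$ by the edge cut $[X\setminus\{w\},\overline{X}\cup\{w\}]$, whose size is $|F|+deg_G(w)-2c(w)<|F|$; as $|X|\ge 2$ this is a genuine cut, contradicting minimality, and the same works on the $\overline{X}$-side. Thus every boundary vertex $w$ satisfies $c(w)\le deg_G(w)/2$, and it is exactly this restriction that will convert a ``half-strength'' local estimate into the required bound.

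Next I would run the local-density count, which is where the locally Dirac hypothesis enters. Fix a boundary vertex $u\in X$ and its crossing neighbours $v_1,\dots,v_{c(u)}\in\overline{X}$. Since $G$ is locally Dirac each $v_i$ has at least $deg_G(u)/2$ neighbours in $\langle N(u)\rangle$, of which at most $c(u)-1$ lie in $\overline{X}$; hence each $v_i$ has at least $deg_G(u)/2-c(u)+1$ neighbours $w\in N(u)\cap X$, and every such edge $v_iw$ lies in $F$. These edges are pairwise distinct and distinct from the $c(u)$ edges $uv_i$, so
\[
\lambda=|F|\ \ge\ c(u)\Big(\tfrac{deg_G(u)}{2}-c(u)+2\Big).
\]
The right-hand side, as a function of $c(u)$, is concave and equals $deg_G(u)\ge\delta$ at both endpoints $c(u)=2$ and $c(u)=deg_G(u)/2$, hence is at least $\delta$ on the whole interval $[2,deg_G(u)/2]$ permitted by the no-heavy-vertex step. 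So any boundary vertex with $c(u)\ge 2$ forces $\lambda\ge\delta$. To see that such a vertex exists, take any cut edge $uv$ and apply the locally Dirac condition at $v$: the common neighbourhood $W=N(u)\cap N(v)$ has $|W|\ge deg_G(v)/2\ge\delta/2$, each vertex of $W\cap X$ gives a crossing edge at $v$ other than $uv$ and each vertex of $W\cap\overline{X}$ a crossing edge at $u$ other than $uv$, whence $c(u)+c(v)\ge|W|+2\ge\delta/2+2\ge 3$ (using $\delta\ge 2$, which holds because a degree-one vertex would violate the locally Dirac condition). Thus some endpoint has crossing degree at least $2$, completing the contradiction.

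The step I expect to be the main obstacle is precisely the passage from the naive count to the sharp bound. Because the locally Dirac condition only guarantees density $1/2$ in each neighbourhood, counting forced crossing edges around a single vertex or a single edge stalls at an estimate of order $\delta/2$, which is useless for the claim. The idea that rescues the argument is the minimality observation eliminating heavy vertices: confining $c(u)$ to $[2,deg_G(u)/2]$ is exactly what makes the concave expression $c(u)\big(deg_G(u)/2-c(u)+2\big)$ bottom out at $deg_G(u)$ rather than at $deg_G(u)/2$, upgrading the bound to the needed $\delta$. In writing this up I would be careful about integrality (using $\lceil deg_G(u)/2\rceil$ from the Dirac inequality and $\lfloor deg_G(u)/2\rfloor$ for lightness) and about verifying that relocating a boundary vertex never empties a side, but these are routine once the main inequality is in place.
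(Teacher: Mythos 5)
Your proof is correct, and while it shares the paper's central counting inequality, the surrounding architecture is genuinely different. The paper selects a boundary vertex $v$ of \emph{maximum} crossing degree $k$ and splits into $k\ge \deg(v)/2$ and $k<\deg(v)/2$; in the first case it counts a forced crossing edge through each same-side neighbour of $v$, and in the second case it derives exactly your inequality $|S|\ge k\bigl(\tfrac{\deg(v)}{2}-k+1\bigr)+k$, rearranges it to $(k-2)\tfrac{\deg(v)}{2}\le k(k-2)$ to rule out $k\ge 3$, and then kills $k=1$ and $k=2$ by ad hoc arguments that exploit the maximality of $k$ (each produces a vertex across the cut meeting at least three cut edges). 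You instead prove up front, by the vertex-switching/minimality argument, that \emph{no} boundary vertex is heavy, which eliminates the paper's first case entirely and confines every crossing degree to $[0,\deg/2]$; you then read the same quadratic as a concave function equal to $\deg(u)$ at both endpoints $c=2$ and $c=\deg(u)/2$, so that a single boundary vertex with crossing degree at least $2$ suffices, and you manufacture one directly from the locally Dirac condition applied to any one cut edge (using $\delta\ge 2$). Your route also dispenses with the paper's separate treatment of orders $3\le n\le 5$. What the paper's version buys is self-containedness (no appeal to the switching lemma) and consistency with its recurring device of choosing an extremal attachment vertex; what yours buys is brevity, the elimination of the $k=1,2$ case analysis, and a cleaner explanation of \emph{why} the bound upgrades from $\delta/2$ to $\delta$, namely the restriction of the concave count to the no-heavy-vertex interval. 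The details you flag as routine (integrality, and that moving a vertex across never empties a side since $|X|\ge 2$) do check out.
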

\begin{proof} It is readily seen that the only locally Dirac graphs of orders $3$ or $4$ are complete.  Moreover the only locally Dirac graphs of order $5$ are $K_5$ and $K_5-e$ where $e$ is any edge of the $K_5$. Thus $\lambda(G) = \delta(G)$ for $3 \le n \le 5$.

Let $G$ be a locally Dirac graph of order $n \ge 6$ and let $S$ be a minimum edge-cut of $G$. Let $G_1$ and $G_2$ be the two components of $G-S$. Among all vertices of $G-S$ incident with edges of $S$, let $v$ be one incident with a maximum number of edges of $S$. We may assume that $v$ belongs to $G_1$. Suppose $v$ is incident with $k$ edges of $S$. Thus each of these $k$ edges joins $v$ with a vertex of $G_2$.

Assume first that $k \ge deg(v)/2$. If $k = deg(v)$ the results follows from the above remark. Suppose now that $v$ is adjacent with vertices of $G_1$. Let $u$ be a neighbour of $v$ in $G_1$. Since there are $deg(v)-k$ neighbours of $v$ in $G_1$, the vertex $u$ is adjacent with at most $deg(v)-k-1 < deg(v)/2$ neighbours of $v$ in $G_1$. Hence $u$ must be adjacent with a neighbour $u'$ of $v$ in $G_2$. So $uu' \in S$. Thus $|S| \ge deg(v)$. Since $|S| \le \delta(G) \le deg(v)$ we see that $\lambda(G) = \delta(G)$.

Assume next that $k < deg(v)/2$. Let $u$ be a neighbour of $v$ in $G_2$. Since $G$ is locally Dirac and since $u$ is adjacent with at most $k-1$ neighbours of $v$ in $G_2$, it follows that $u$ is adjacent with at least $\frac{deg(v)}{2} -k +1$ neighbours of $v$ in $G_1$. So $S$ contains at least $k(\frac{deg(v)}{2}-k+1) +k$ edges. Hence  $k(\frac{deg(v)}{2}-k+1) +k \le deg(v)$. So $(k-2)\frac{deg(v)}{2} \le k(k-2)$. If $k \ge 3$, we get $\frac{deg(v)}{2} \le k$, contrary to our assumption. So $k=1$ or $k=2$. Suppose $k=1$. Let $u$ be the neighbour of $v$ in $G_2$. Since $k < \frac{deg(v)}{2}$, $v$ must have at least two neighbours in $G_1$, i.e., $deg(v) \ge 3$. Since $G$ is locally Dirac it follows that $u$ must have at least two neighbours in $V(G_1) \cap N(v)$. So $u$ is incident with at least three edges of $S$, contrary to our choice of $v$. So $k \ne 1$. Suppose $k =2$. Then $v$ has at least three neighbours in $G_1$. So $deg(v) \ge 5$. So $u$, a neighbour of $v$ in $G_2$, is adjacent with at least three neighbours of $v$ of which at least two are in $G_1$. So $u$ is incident with at least three edges of $S$, contrary to our choice of $v$.
\end{proof}

\medskip

We now show that this result does not extend to graphs that are locally Ore.

\begin{proposition}
There exist infinitely many graphs $G$ that are locally Ore and  such that $\lambda(G) \ne \delta(G)$.
\end{proposition}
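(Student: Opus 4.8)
The plan is to construct an explicit infinite family of locally Ore graphs whose edge-connectivity falls strictly below the minimum degree, in sharp contrast with Theorem~\ref{edge_connectivity_locally_Dirac}. The natural building block is the graph $P_m \boxtimes K_3$, which has already been used repeatedly in this section as a source of locally Ore/Dirac graphs with small connectivity, large diameter, and vertices of degree exactly $5$ inducing triangles. My strategy is to take two copies of such a strong product (or one copy together with a suitably attached gadget) and glue them along a small set of edges so as to create a sparse edge-cut, while ensuring every open neighbourhood still satisfies the local Ore inequality $deg_{\langle N(v)\rangle}(u) + deg_{\langle N(v)\rangle}(w) \ge deg_G(v)$ for non-adjacent $u,w \in N(v)$.

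First I would recall, as already noted following Theorem~\ref{connectivity_loc_dirac}, that one can attach a new vertex of degree $3$ to three mutually adjacent degree-$5$ vertices of a copy of $P_m \boxtimes K_3$ and still remain locally Ore; this is exactly the mechanism that drives $\delta(G)$ down to $3$ while leaving the bulk of the graph densely clustered. The key observation is that the local Ore condition is far more forgiving than the local Dirac condition precisely at vertices of small degree: a vertex $w$ of degree $3$ whose neighbourhood induces a $K_3$ contributes favourably to the Ore sum, and the demand on its high-degree neighbours is governed by $deg_G$ of the \emph{centre} vertex, not of $w$ itself. So I would build a graph $G$ from two copies $A$ and $B$ of $P_m \boxtimes K_3$ joined by a small, carefully chosen bridge structure through one or two low-degree attachment vertices, arranging the edges so that deleting a constant number (say $3$) of them separates $A$ from $B$, whereas $\delta(G)$ is forced to be a larger fixed value (at least $5$, the minimum degree native to the interior of $P_m \boxtimes K_3$). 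Letting $m \to \infty$ then yields infinitely many such graphs.

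The verification splits into three routine-but-careful checks. (1) Locally Ore: I would verify the Ore inequality at every vertex by cases according to whether the vertex lies in the dense interior of $A$ or $B$ (where the inherited $P_m \boxtimes K_3$ structure already guarantees it), or is one of the finitely many attachment/bridge vertices (where I must inspect the induced neighbourhood directly). (2) Edge-connectivity: the constructed bridge gives an explicit edge-cut $S$ separating $A$ from $B$, so $\lambda(G) \le |S|$; this is immediate from the construction. (3) Minimum degree: I would confirm $\delta(G) > |S|$ by checking that no vertex has degree as small as $|S|$, which holds provided the attachment vertices are given enough neighbours on their own side. Combining these gives $\lambda(G) = |S| < \delta(G)$.

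The main obstacle, and the place where the construction must be chosen with care, is reconciling requirements (1) and (3): to keep $\lambda(G)$ small I want a thin bridge through low-degree vertices, but the local Ore condition at the \emph{neighbours} of those bridge vertices could fail if a bridge vertex has a high-degree centre demanding a large Ore sum across a sparse neighbourhood. The resolution is to route the bridge through vertices whose only high-degree neighbours already see almost all of each other inside the dense product structure, so that the induced neighbourhood of each relevant centre is nearly complete and the Ore sum is automatically met; equivalently, one ensures any two non-adjacent vertices in a centre's neighbourhood are both ``bridge-adjacent'' in a way that keeps their degrees within the local subgraph high. Once the bridge is localized to a bounded gadget that is verified once and for all, the unbounded parameter $m$ only replicates the already-checked interior structure, so the infinitude of the family follows without further work.
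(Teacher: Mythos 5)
Your construction has a genuine gap, and it is located exactly at the point you flag but do not resolve. Suppose you join two copies $A,B$ of $P_m \boxtimes K_3$ by a bounded set $S$ of bridge edges and let $a\in V(A)$ be incident with a bridge edge $ab$, $b\in V(B)$. Inside $\langle N(a)\rangle$, every neighbour of $b$ is reached either through another bridge edge at $a$ (a second cross-neighbour of $a$) or through a bridge edge at $b$ going back into $N(a)\cap V(A)$; in either case the edge lies in $S$. So if $|S|$ is a small constant, $b$ has local degree at most $|S|-1$ in $\langle N(a)\rangle$, while $a$ retains at least $\deg(a)-|S|\ge 2$ neighbours $w$ in $A$ that are non-adjacent to $b$. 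For such a pair the Ore sum is at most $(|S|-1)+(\deg(a)-2)$, which is less than $\deg(a)$ once $|S|\le 2$, and for $|S|=3$ or $4$ the surviving configurations force almost all of $S$ to be concentrated at $b$, whereupon the Ore condition fails at $b$ instead: a neighbour $z$ of $b$ inside $B$ and the vertex $a$ are non-adjacent in $\langle N(b)\rangle$ and their local degrees cannot sum to $\deg(b)$. In the most natural instantiation (matching the two end-triangles by three edges $a_ib_i$) the vertex $b_1$ is \emph{isolated} in $\langle N(a_1)\rangle$, so the condition fails immediately. Your proposed fix, ``route the bridge so that the relevant neighbourhoods are nearly complete,'' is precisely what a constant-size cut cannot achieve, because every edge that would raise $b$'s degree inside $N(a)$ is itself a cut edge. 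In short, the local Ore condition forces any edge cut to have size close to $\delta$, so no thin bridge between sparse product graphs can work.

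The paper's actual construction respects this constraint: it takes two copies of $K_{k^2+2}$ and joins $k$ vertices of one to $k$ vertices of the other by a complete bipartite bridge of $k^2$ edges. Then $\delta(G_k)=k^2+1$ while $\lambda(G_k)=k^2$, a gap of exactly $1$, and the bridge is large enough (and the two sides complete) that the Ore inequality holds at the bridge vertices for $k\ge 3$: a cross-neighbour has local degree $2k-2$ and an interior vertex has local degree $k^2$, summing to at least $k^2+k+1=\deg(v_i)$. The lesson is that the cut must scale with the minimum degree; infinitude comes from letting $k\to\infty$, not from lengthening a product graph across a fixed gadget. If you want to salvage your approach you would need to replace the constant-size bridge by one of size $\delta-1$ and verify the Ore inequality at every bridge vertex, at which point you have essentially reconstructed the paper's example.
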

\begin{proof} Let $k \ge 3$ be an integer. Let $G_{k,1}$ and $G_{k,2}$ be two copies of $K_{k^2+2}$ with vertex sets $\{v_1,v_2, \ldots, v_{k^2+2}\}$ and $\{u_1,u_2, \ldots, u_{k^2+2}\}$, respectively. Let $G_k$ be the graph obtained from $G_{k,1} \cup G_{k,2}$ by adding all edges between the set $\{v_1,v_2, \ldots, v_k\}$ and the set $\{u_1,u_2, \ldots, u_k\}$. Then $G_k$ is locally Ore and $\delta(G_k)=k^2+1$ but $\lambda(G_k) =k^2$.
\end{proof}

\section{Cycle Structure of Locally Dirac Graphs}
In this section we show that locally Dirac graphs with maximum degree at most 11 are fully cycle extendable. We begin with a few definitions, some notation and useful results.
Let $C=v_0v_1v_2\ldots v_{t-1} v_0$ be a $t$-cycle in a graph $G$. If $i \ne j$ and $\{i,j\}\subseteq \{0,1,\ldots, t-1\}$, then $v_i\overrightarrow{C}v_j$ and $v_i\overleftarrow{C}v_j$ denote, respectively, the paths $v_iv_{i+1}\ldots v_j$ and $v_iv_{i-1}\ldots v_j$ (subscripts expressed modulo $t$).  Let $C=v_0 v_1, \ldots v_{t-1} v_0$ be a non-extendable cycle in a graph $G$. With reference to a given non-extendable cycle $C$, a vertex of $G$ will be called a \emph{cycle vertex} if it is on $C$, and an \emph{off-cycle }vertex if it is in $V(G)-V(C)$. A cycle vertex that is adjacent to an off-cycle vertex will be called an \emph{attachment vertex}.
The following basic results on non-extendable cycles will be used frequently and were established in \cite{afow}. Since the proofs are short we include them here for completeness.

\begin{lemma} \cite{afow} \label{nonextendable1} Let $C=v_0v_1\ldots v_{t-1} v_0$ be a non-extendable cycle of length $t$ in a graph $G$. Suppose $v_i$ and $v_j$ are two distinct attachment vertices of $C$ that have a common off-cycle neighbour $x$. Then the following hold. (All subscripts are expressed modulo $t$.)

\begin{itemize}
\item[\emph{1}.] $j\neq i+1$.
\item[\emph{2}.] Neither $v_{i+1}v_{j+1}$ nor $v_{i-1}v_{j-1}$ is in $E(G)$.

\item[\emph{3}.] If $v_{i-1}v_{i+1}\in E(G)$, then neither $v_{j-1}v_i$ nor $v_{j+1}v_i$ is in $E(G)$.
\item[\emph{4}.] If $j=i+2$ then $v_{i+1}$ does not have two neighbours $v_k,v_{k+1}$ on the path  $v_{i+2}\ldots v_i$.
\end {itemize}
\end{lemma}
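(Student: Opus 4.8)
The plan is to prove all four statements by a single unifying strategy: assume the forbidden configuration occurs, and then exhibit a cycle $C'$ that contains every vertex of $C$ together with the single off-cycle vertex $x$, contradicting the non-extendability of $C$. Throughout, $x$ plays the role of the inserted vertex, and the chords asserted to exist serve as crossover edges that reconnect the arcs into which $C$ is split by deleting a few cycle edges. All subscripts are read modulo $t$, and I would repeatedly invoke orientation-reversal symmetry ($v_k \mapsto v_{-k}$) to halve each case analysis. Note also that part 1, once established (together with the symmetry $i \leftrightarrow j$), guarantees that $v_i$ and $v_j$ are not consecutive on $C$, so the arcs used in parts 2--4 connect genuinely distinct endpoints.

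For part 1, if $j = i+1$ then inserting $x$ into the edge $v_iv_{i+1}$ gives $C' = v_0\ldots v_i\, x\, v_{i+1}\ldots v_{t-1}v_0$, an extension of $C$; the symmetric argument rules out $j = i-1$. For part 2, assuming $v_{i+1}v_{j+1}\in E(G)$, I would delete the edges $v_iv_{i+1}$ and $v_jv_{j+1}$, splitting $C$ into the arcs $v_{i+1}\overrightarrow{C}v_j$ and $v_{j+1}\overrightarrow{C}v_i$, and reassemble them as $C' = v_i\, x\, v_j\overleftarrow{C}v_{i+1}\, v_{j+1}\overrightarrow{C}v_i$, where $x$ bridges $v_i$ and $v_j$ and the chord $v_{i+1}v_{j+1}$ bridges the two arcs. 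The case $v_{i-1}v_{j-1}\in E(G)$ follows by reversing orientation.

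For part 3, the hypothesis supplies the bypass chord $v_{i-1}v_{i+1}$, which lets me route the cycle around $v_i$ and reinsert $v_i$ (with $x$) elsewhere. Assuming $v_{j-1}v_i\in E(G)$, the candidate is $C' = v_{i-1}\, v_{i+1}\overrightarrow{C}v_{j-1}\, v_i\, x\, v_j\overrightarrow{C}v_{i-1}$: it bypasses $v_i$ via $v_{i-1}v_{i+1}$, re-enters $v_i$ through the chord $v_{j-1}v_i$, and threads $v_i\, x\, v_j$; the case $v_{j+1}v_i\in E(G)$ is the orientation-reversed mirror. For part 4, with $j = i+2$ I would push $x$ between $v_i$ and $v_{i+2}$ (forming $v_i\, x\, v_{i+2}$, which skips $v_{i+1}$) and reinsert the skipped vertex $v_{i+1}$ between a consecutive pair of its neighbours $v_k, v_{k+1}$ on the arc $v_{i+2}\overrightarrow{C}v_i$, yielding $C' = v_i\, x\, v_{i+2}\overrightarrow{C}v_k\, v_{i+1}\, v_{k+1}\overrightarrow{C}v_i$.

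The routine-but-essential part, and the only place where care is genuinely required, is verifying in each construction that $C'$ is an honest cycle: that every listed edge exists, and that each vertex of $C$ appears exactly once, so the two arcs partition $V(C)\setminus\{v_i\}$ (or $V(C)\setminus\{v_{i+1}\}$ in part 4) as intended. One must also check the degenerate index positions where an arc collapses to a single vertex (for instance $k = i+2$ or $k+1 = i$ in part 4, or $j$ near $i$ in part 3) and confirm that $C'$ remains a valid single-vertex extension there. This bookkeeping is the main obstacle, but it is entirely mechanical once the orientation and the endpoints of each arc are fixed.
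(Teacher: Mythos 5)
Your proposal is correct and takes essentially the same approach as the paper: in each part you exhibit exactly the same extension cycle that the paper writes down (your cycles in parts 2--4 coincide with the paper's up to the choice of starting vertex), contradicting non-extendability. The additional bookkeeping you flag is genuinely all that remains, and it goes through as you describe.
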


\begin{proof} We prove each item by presenting an extension of $C$ that would result if the given statement is assumed to be false. For (2) and (3) we only need to consider the first mentioned forbidden edge, due to symmetry.
\begin{enumerate}
\item[1.] $v_ixv_{i+1}\overrightarrow{C}v_i$.
\item[2.] $v_{i+1}v_{j+1}\overrightarrow{C}v_ixv_j\overleftarrow{C}v_{i+1}$.
\item[3.] $v_{j-1}v_ix v_j\overrightarrow{C}v_{i-1}v_{i+1}\overrightarrow{C}v_{j-1}$.
\item[4.] $v_kv_{i+1}v_{k+1}\overrightarrow{C}v_ixv_{i+2}\overrightarrow{C}v_k$.
\end{enumerate}
\hfill \end{proof}

\medskip

Before establishing the next main result we prove another useful lemma.

\medskip

\begin{lemma} \label{nonextendable2}
Let $C=v_0v_1\ldots v_{t-1} v_0$ be a non-extendable cycle of length $t$ in a connected locally Dirac graph $G$. Among all attachment vertices, select one of maximum degree. Assume that $v_0$ is such an attachment vertex with degree $d=deg(v_0)$ and suppose $v_0$ has $s \ge 1$ off-cycle neighbours. Let $x$ be an off-cycle neighbour of $v_0$.
\begin{enumerate}
\item[\emph{1}.] Then $d \ge 6$ and $s \le \frac{d}{2}-2$ if $v_1 \nsim v_{t-1}$ and $s \le \frac{d}{2}-1$ if $v_1 \sim v_{t-1}$.
\item[\emph{2}.] At least $\lceil \frac{s(\lceil d/2 \rceil-s+1)}{(d-s-2)} \rceil$ off-cycle neighbours of $v_0$ share a common cycle neighbour of $v_0$.
\item[\emph{3}.] If $v$ is a vertex of $G$, then every neighbour of $v$ has at most $\lfloor \frac{deg(v)}{2} \rfloor - 1$  non-neighbours in $\langle N(v) \rangle$ and if $v$ is an attachment vertex $v$ has at most $\lfloor \frac{d}{2} \rfloor -1$ non-neighbours in $\langle N(v) \rangle$.
\item[\emph{4}.] If an off-cycle neighbour $x$ is adjacent with $v_i$ and $v_{i+2}$ and some vertex $v_j$ on $v_{i+3} \overrightarrow{C}v_{i-2}$ is such that $v_j \sim \{v_{i+1},v_{i-1}\}$, then $v_{j-1} \nsim v_{j+1}$. Also if there is a $v_j$ on $v_{i+4} \overrightarrow{C}v_{i-1}$ such that $v_j \sim \{v_{i+1}, v_{i+3}\}$, then $v_{j-1} \nsim v_{j+1}$.
\item[\emph{5}.] If some off-cycle vertex $y$ is such that $y \sim\{v_i,v_j\}$ where $i < j$, then (i) there are no consecutive vertices on $v_j \overrightarrow{C}v_i$ such that one of these is adjacent with $v_{i+1}$ and the other with $v_{j-1}$, and (ii) there are no consecutive vertices on $v_i \overrightarrow{C} v_j$ such that one of them is adjacent with $v_{j+1}$ and the other with $v_{i-1}$.
\item[\emph{6}.] Suppose there exist vertices $v_i,v_j$ and $v_k$ on $C$ where $0 \le i < j-1$ and $j < k-1 < t-2$ and such that either (i) $x \sim\{v_i,v_j\}$, $v_{k-1}\sim v_{i+1}$, $v_{j+1} \sim v_{i-1}$, and $v_i \sim v_k$ or (ii) $x \sim\{v_i,v_k\}$, $v_{k-1} \sim v_{i+1}$, $v_{j+1} \sim v_{i-1}$ and $v_i \sim v_j$, or (iii) $x \sim \{v_i, v_j\}$, $v_{k+1} \sim v_{j-1}$, $v_{j+1} \sim v_{i-1}$ and $v_j \sim v_k$ or (iv) $x \sim \{v_i,v_k\}$, $v_{i+1} \sim v_{k-1}$, $v_{j-1} \sim v_{k+1}$ and $v_j \sim v_k$, then $C$ is extendable.
\item[\emph{7}.] If there is a vertex $v_j$ such that $2< j < t-2$ and $v_j \sim\{x,v_0,v_1\}$ or $v_j \sim \{x, v_0, v_{t-1}\}$, then $deg(v_0) \ge 8$.
\end{enumerate}
\end{lemma}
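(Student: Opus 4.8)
The plan is to argue by contradiction: assume $deg(v_0) \le 7$. Since part (1) already gives $deg(v_0) \ge 6$, only $d=6$ and $d=7$ survive, and I would rule out both. Throughout I treat the case $v_j \sim \{x,v_0,v_1\}$; the case $v_j \sim \{x,v_0,v_{t-1}\}$ follows by reversing the orientation of $C$. The first observation is that $v_j$ is itself an attachment vertex (it is adjacent to the off-cycle vertex $x$), so $v_0$ and $v_j$ are two distinct attachment vertices with common off-cycle neighbour $x$, and Lemma \ref{nonextendable1} applies to them.

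Next I would record the forbidden edges this forces. Since $x \sim v_0$, adjacency of $x$ to $v_1$ or to $v_{t-1}$ would give $v_0$ a consecutive attachment-neighbour sharing $x$, contradicting part (1) of Lemma \ref{nonextendable1}; hence $x \nsim v_1$ and $x \nsim v_{t-1}$. Applying part (3) of Lemma \ref{nonextendable1} to the ordered pair $(v_j,v_0)$ and using the hypothesis $v_1 \sim v_j$ yields $v_{j-1}\nsim v_{j+1}$, while part (2) gives $v_1 \nsim v_{j+1}$ and $v_{t-1}\nsim v_{j-1}$. In particular, inside $\langle N(v_0)\rangle$ the three vertices $v_1,v_{t-1},x$ carry no edge at $x$, so $\{v_1,v_{t-1},x\}$ is independent when $v_1\nsim v_{t-1}$.

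Now I would bring in the degree counts. By part (1), $s \le d/2 - 2$ if $v_1\nsim v_{t-1}$ and $s \le d/2 -1$ if $v_1 \sim v_{t-1}$, so for $d\in\{6,7\}$ we get $s=1$ in the non-adjacent case and $s\le 2$ in the adjacent case. Let $A = N(v_0)\cap N(x)\cap V(C)$ be the set of cycle-vertex neighbours of $x$ in $\langle N(v_0)\rangle$; then $v_j\in A$ and, since $G$ is locally Dirac, $deg_{\langle N(v_0)\rangle}(x)\ge \lceil d/2\rceil$ while $x$ has at most $s-1$ off-cycle neighbours inside $N(v_0)$, so $|A|\ge \lceil d/2\rceil-(s-1)$. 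Applying the same Dirac bound to $v_1$ and to $v_{t-1}$ (each missing $x$, and missing the other when $v_1\nsim v_{t-1}$) forces $v_1$ and $v_{t-1}$ to be adjacent to almost all of $A$. In the non-adjacent case this closes cleanly: $s=1$ forces $N(v_0)=\{v_1,v_{t-1},x\}\cup A$ with $|A|=d-3\ge 3$, and the Dirac bounds then make each of $v_1,v_{t-1},x$ adjacent to every vertex of $A$. This essentially complete bipartite structure between the independent triple $\{v_1,v_{t-1},x\}$ and $A$, together with the chords from $v_0$, matches one of the four extension patterns of part (6) (equivalently, one writes down an extension of $C$ directly), contradicting the non-extendability of $C$.

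The main obstacle is the sub-case $v_1\sim v_{t-1}$ with $s=2$: here part (1) permits a second off-cycle neighbour of $v_0$, the Dirac counts only give $|A|\ge \lceil d/2\rceil-1$, and $v_1,v_{t-1}$ may each miss one vertex of $A$, so the forced structure is thinnest. In this regime I would exploit the hypothesis $v_1\sim v_j$ and the forbidden edges recorded above more carefully, extracting from the surviving chords a triple $v_i=v_0,v_j,v_k$ meeting one of the hypotheses of part (6) (or building an explicit rerouting of $C$ through $x$). Checking that every admissible placement on $C$ of the two or three vertices of $A$ produces such a configuration is the real content of the argument; once it is done, the resulting extension contradicts the choice of $C$, and therefore $deg(v_0)\ge 8$.
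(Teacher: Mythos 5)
There is a genuine gap, and it comes in two layers. First, the statement you were asked to prove has seven items, and your proposal addresses only item (7), invoking items (1)--(6) as known facts; the paper proves all seven (items (1) and (2) by counting edges out of the off-cycle neighbours, item (3) directly from the locally Dirac condition, and items (4)--(6) by exhibiting explicit extensions of $C$). Second, even for item (7) your argument is not complete: you say yourself that verifying that ``every admissible placement on $C$ of the two or three vertices of $A$ produces such a configuration is the real content of the argument,'' and that content is not supplied. This matters because the configurations of item (6) require chords such as $v_{k-1}\sim v_{i+1}$ that are \emph{not} forced by the Dirac condition inside $\langle N(v_0)\rangle$ alone, so it is far from clear that your extension-hunting strategy can be closed in the sub-case $v_1\sim v_{t-1}$, $s=2$; the contradiction has to come from somewhere other than the non-extendability of $C$.

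The frustrating part is that you had already assembled exactly the right ingredients and then looked in the wrong neighbourhood. You correctly derived $x\nsim v_{j+1}$ (Lemma~\ref{nonextendable1}(1)), $v_1\nsim v_{j+1}$ (Lemma~\ref{nonextendable1}(2) with $i=0$), and $v_{j-1}\nsim v_{j+1}$ (Lemma~\ref{nonextendable1}(3) applied to the pair $(v_j,v_0)$, using $v_j\sim v_1$). Now observe that $x$, $v_1$, $v_{j-1}$ and $v_{j+1}$ are four distinct vertices of $N(v_j)$ (here $2<j<t-2$ is used), so $v_{j+1}$ has at least three non-neighbours in $\langle N(v_j)\rangle$. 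Item (3) of the present lemma, applied with $v=v_j$, gives $3\le\lfloor \deg(v_j)/2\rfloor-1$, hence $\deg(v_j)\ge 8$. Since $v_j\sim x$ makes $v_j$ an attachment vertex and $v_0$ was chosen of maximum degree among attachment vertices, $d=\deg(v_0)\ge\deg(v_j)\ge 8$. That is the paper's entire proof of item (7); no case split on $d\in\{6,7\}$, no values of $s$, and no search for an extension are needed.
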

\begin{proof}
\begin{enumerate}
\item[1.]   Since $x$ is adjacent with at most $s-1$ off-cyle neighbours of $v_0$ it follows that $x$ is adjacent with at least $\frac{d}{2}-s+1$ cycle neighbours of $v_0$. By Lemma \ref{nonextendable1}(1), $x \nsim \{v_1,v_{t-1}\}$. So $\frac{d}{2}-s+1 \le d-s-2$. Hence $d \ge 6$.

    Suppose $v_1 \nsim v_{t-1}$. Since $v_1$ is not adjacent with any off-cycle neighbours of $v_0$, and since $v_1 \nsim v_{t-1}$, $d-s-2 \ge \frac{d}{2}$. Hence $s \le \frac{d}{2}-2$. If $v_1 \sim v_{t-1}$, then $v_1$ has at least $\frac{d}{2}-1$ neighbours that are cycle neighbours of $v_0$. So $s \le \frac{d}{2}-1$.
\item[2.] There are at least $s(\lceil d/2 \rceil-s+1)$ edges that join off-cycle neighbours of $v_0$ with the $d-s-2$ cycle neighbours of $v_0$ other than $v_1$ and $v_{t-1}$. So at least $s(\lceil d/2 \rceil-s+1)/(d-s-2)$ edges are incident with some cycle neighbour of $v_0$. Since $G$ has no multiple edges these edges are incident with distinct off-cycle neighbours of $v_0$.
\item[3.] This follows from the definition of a locally Dirac graph and our choice of $v_0$.
\item[4.] In the first case $v_{j-1}v_{j+1} \overrightarrow{C} v_{i-1}v_jv_{i+1}v_ixv_{i+2}\overrightarrow{C}v_{j-1}$ is an extension of $C$. The second case can be argued similarly.
\item[5.] (i) Suppose $v_{i+1} \sim v_{l}$ and $v_{j-1} \sim v_{l-1}$ for some $v_l$ and $v_{l-1}$ on $v_j \overrightarrow{C}v_i$. Then $v_iyv_j\overrightarrow{C}v_{l-1}v_{j-1} \overleftarrow{C}v_{i+1}v_l\overrightarrow{C}v_i$ is an extension of $C$.  Similarly if $v_{i+1} \sim v_{l-1}$ and $v_{j-1} \sim v_{l}$ for some $v_l$ and $v_{l-1}$ on $v_j\overrightarrow{C}v_i$, then $v_iyv_j\overrightarrow{C}v_{l-1}v_{i+1} \overrightarrow{C}$ $v_{j-1}v_l \overrightarrow{C}v_i$ is an extension of $C$. Case (ii) can be argued similarly.
\item[6.] In the case of $(i)$ $v_ixv_j \overleftarrow{C}v_{i+1}v_{k-1} \overleftarrow{C}v_{j+1}v_{i-1} \overleftarrow{C} v_kv_i$ is an extension of $C$ and in case (ii), $v_ixv_k \overrightarrow{C}$ $v_{i-1}v_{j+1} \overrightarrow{C} v_{k-1}$ $v_{i+1} \overrightarrow{C}v_jv_i$ is an extension of $C$. Cases (iii) and (iv) can be argued similarly.
\item[7.] Suppose $v_j \sim\{x,v_0,v_1\}$. By Lemmas \ref{nonextendable1} (1) - (3), $v_{j+1} \nsim\{x,v_1,v_{j-1}\}$. By part (3) above, $deg(v_j) \ge 8$. Hence $d=deg(v_0) \ge 8$. The case where $v_j \sim\{x,v_0,v_{t-1}\}$ can be argued similarly.
\end{enumerate}
\hfill \end{proof}

\medskip

 The next result shows that every locally Dirac graph with maximum degree at most 11 is not only Hamiltonian but in fact fully cycle extendable.

\begin{theorem} \label{cycle_extendability_in_loc_Dirac}
If $G$ is a connected locally Dirac graph with $\Delta(G)=\Delta \le 11$, then $G$ is fully cycle extendable.
\end{theorem}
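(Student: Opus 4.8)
The plan is to verify directly the two conditions in the definition of full cycle extendability: that every vertex of $G$ lies on a $3$-cycle, and that every non-hamiltonian cycle is extendable. The first condition is immediate. By the corollary to Theorem \ref{connectivity_loc_dirac} every connected locally Dirac graph of order at least $4$ is $3$-connected, so $\delta(G) \ge 3$ (the orders $n \le 3$ being trivial); hence for each vertex $v$ the graph $\langle N(v) \rangle$ has order at least $3$ and satisfies Dirac's condition, so by Theorem \ref{Diracs_condition} it is hamiltonian and in particular contains an edge $uw$. Then $v u w v$ is a triangle through $v$. Note also that by Corollary \ref{locally_Dirac} the graph $G$ is already known to be hamiltonian and $\{1,2\}$-extendable, so the genuine content of the theorem is to upgrade $\{1,2\}$-extendability to single-vertex extendability; equivalently, to show that a non-extendable cycle must be hamiltonian.

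I would establish extendability by contradiction: suppose $G$ has a non-extendable cycle $C = v_0 v_1 \cdots v_{t-1} v_0$ with $t < n$, so that $C$ has at least one off-cycle vertex. Among all attachment vertices of $C$ I select one of maximum degree, relabel it $v_0$, set $d = \deg(v_0)$, and fix an off-cycle neighbour $x$ of $v_0$; let $s \ge 1$ be the number of off-cycle neighbours of $v_0$. This is precisely the configuration of Lemma \ref{nonextendable2}, whose seven parts drive the rest of the argument. Part $1$ already forces $d \ge 6$, and since $\Delta \le 11$ we are reduced to the finitely many values $d \in \{6,7,8,9,10,11\}$, with the additional bound $s \le \lfloor d/2\rfloor - 2$ or $s \le \lfloor d/2\rfloor - 1$ according to whether $v_1 \nsim v_{t-1}$ or $v_1 \sim v_{t-1}$.

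The core is then a case analysis on $d$, and within each case on $s$ and on whether $v_1 \sim v_{t-1}$. The governing tension is that the local Dirac condition forces many edges—$x$, and more generally every off-cycle neighbour of $v_0$ and every attachment vertex, must have large degree in the appropriate neighbourhood (quantified by Lemma \ref{nonextendable2}(1) and (3))—while non-extendability forbids a large family of potential edges (Lemma \ref{nonextendable1} and Lemma \ref{nonextendable2}(4)--(6)). I would use Lemma \ref{nonextendable2}(2) to locate off-cycle neighbours of $v_0$ that share a common cycle neighbour $v_i$, then apply the forbidden-edge lemmas to pin down the adjacencies among $v_{i-1}, v_i, v_{i+1}, v_1, v_{t-1}$ and the off-cycle vertices. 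The accumulated restrictions leave too few admissible edges to meet the Dirac degree bound inside $\langle N(v_0) \rangle$ (or inside $\langle N(v_i) \rangle$ for a suitably chosen attachment vertex), yielding a contradiction. For the smallest degrees $d = 6, 7$, Lemma \ref{nonextendable2}(7) is the sharpest tool: it rules out any cycle vertex simultaneously adjacent to $x$, $v_0$, and a $C$-neighbour of $v_0$, which severely constrains how the off-cycle neighbours attach.

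I expect the principal obstacle to be the bookkeeping in the larger-degree cases, especially $d = 10$ and $d = 11$, where $s$ may be as large as $4$ and the number of off-cycle neighbours forced to share cycle neighbours grows; there one must track several off-cycle vertices and the numerous forbidden chords at once. The delicate and recurring point is to always route the final contradiction back to the Dirac degree condition at a vertex whose degree is controlled—ideally the maximum-degree attachment vertex $v_0$—so that the hypothesis $\Delta \le 11$ can genuinely be invoked; this is also where that hypothesis is indispensable, since for larger maximum degree the competing edge-forcing and edge-forbidding requirements can be reconciled and the counting contradiction no longer closes.
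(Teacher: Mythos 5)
Your plan coincides with the paper's own proof: it too assumes a non-extendable cycle, selects an attachment vertex $v_0$ of maximum degree among attachment vertices, derives $d \ge 6$ from Lemma \ref{nonextendable2}(1), and then argues the cases $d = 6, \dots, 11$ by playing the edges forced by the local Dirac condition against the edges forbidden by Lemmas \ref{nonextendable1} and \ref{nonextendable2}, exactly as you describe. Note, however, that the case analysis you defer is the entire substance of the theorem --- the paper's treatment of $d=10$ and $d=11$ alone occupies several pages of subcases on $s$ and on whether $v_1 \sim v_{t-1}$, each closed by exhibiting an explicit extension of $C$ or a vertex with too many non-neighbours in some $\langle N(v_i) \rangle$ --- so what you have is a correct outline of the paper's argument rather than a self-contained proof.
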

\begin{proof}
Let $C=v_0v_1\ldots v_{t-1} v_0$ be a non-extendable cycle of length $t$ in a connected locally Dirac graph $G$. Among all attachment vertices, select one of maximum degree. Assume that $v_0$ is such an attachment vertex with degree $d=deg(v_0)$ and suppose $v_0$ has $s \ge 1$ off-cycle neighbours. Let $S$ be the collection of cycle neighbours of $v_0$ distinct from $v_1$ and $v_{t-1}$ and let $x$ be an off-cycle neighbour of $v_0$. By Lemma \ref{nonextendable1}(1), $x \nsim\{v_1,v_{t-1}\}$. So it follows from Lemma \ref{nonextendable2} (3) that $\frac{d}{2} -1 \ge 2$. So $\Delta \ge d \ge 6$.

\noindent{\bf Case 1} Suppose $d=6$.  Then, by Lemma \ref{nonextendable2} (1) every vertex in $N(v_0)$  is non-adjacent with at most two vertices in $\langle N(v_0) \rangle$, or equivalently, is adjacent with at least three vertices of $\langle N(v_0) \rangle$. By Lemma \ref{nonextendable1} (1), $x \nsim  \{v_1, v_{t-1}\}$. Let $S=N(v_0) - \{x, v_1, v_{t-1}\}$. If $v_1 \nsim v_{t-1}$, then it follows from the above that $\{x,v_1, v_{t-1}\} \sim S$. Since $|S|=3$, there is a $v_j \in N(v_0)$ such that $j \ne 2$ or $t-2$. By Lemmas \ref{nonextendable1} (1), (2) and (3), $v_{j+1} \nsim \{x, v_1, v_{j-1}\}$, contrary to Lemma \ref{nonextendable2} (1). If $v_1 \sim v_{t-1}$, then there is a $v_j \in S$ such that, $v_j \sim \{x, v_1\}$. By Lemma \ref{nonextendable1} (2), $j \not\in \{2,t-2\}$. As in the previous case we see that $v_{j+1}$ has at least three non-adjacencies in $\langle N(v_j) \rangle$, namely $v_{j+1} \nsim \{x, v_1, v_{j-1}\}$, contrary to Lemma \ref{nonextendable2} (1).

\noindent{\bf Case 2} Suppose $d=7$. Then every vertex in $N(v_0)$  is non-adjacent with at most two vertices in $\langle N(v_0) \rangle$, or equivalently, is adjacent with at least four vertices of $\langle N(v_0) \rangle$.  If $v_1 \nsim v_{t-1}$, $v_1$ is adjacent with at least four cycle neighbours of $v_0$ (different from $v_{t-1}$) and if $v_1 \sim v_{t-1}$, then both $v_1$ and $v_{t-1}$ are adjacent with at least three cycle neighbours of $v_0$. In either case there is a vertex $v_j$, where $ j \not\in \{2, t-2\}$, such that $v_j \sim \{x,v_0,v_1\}$. So, by Lemma \ref{nonextendable2} (7), $d \ge 8$.

\noindent{\bf Case 3} Suppose $d=8$. By Lemma \ref{nonextendable2} (3) each vertex of $N(v_0)$ is non-adjacent with at most three vertices of $N(v_0)$; so $v_0$ has at most three off-cycle neighbours.  Suppose $v_0$ has three off-cycle neighbours. Then $|S|=3$.  Since $v_1$ and $v_{t-1}$ are non-adjacent with every off-cycle neighbour of $v_0$ and since $G$ is locally Dirac, $\{v_1,v_{t-1}\} \sim S$ and $v_1 \sim v_{t-1}$.  Moreover, each off-cycle neighbour of $v_0$ is adjacent with at least two vertices of $S$. Hence $S$ contains a vertex $v_j$ that is adjacent with at least two off-cycle neighbours of $v_0$. By Lemma \ref{nonextendable1} (2), $j \ne 2$ and $j \ne t-2$. So $deg(v_j) \ge 7$. Since, by Lemmas \ref{nonextendable1} (1) and (2),  $v_{j+1}$ is not adjacent with the off-cycle neighbours of $v_j$ and $v_{j+1} \nsim v_1$ it follows, since $G$ is locally Dirac, and by our choice of $v_0$, that $v_{j+1}$ is adjacent with all other neighbours of $v_j$. So $v_{j+1} \sim v_{j-1}$. This contradicts Lemma \ref{nonextendable2} (3).

Suppose $v_0$ has exactly two off-cycle neighbours. Since each off-cycle neighbour of $v_0$ is adjacent with at least three cycle neighbours of $v_0$, there exist at least two vertices of $S$ that are adjacent with both off-cycle neighbours of $v_0$. Since $G$ is locally Dirac $v_1$ is adjacent with at least one of these vertices of $S$ that has two off-cycle neighbours in $N(v_0)$. Let $v_j$ be such a vertex. By Lemmas \ref{nonextendable1} (1), (2) and (3), $v_{j+1} \nsim \{v_1,v_{j-1}\}$ and $v_j$ is not adjacent with two off-cycle neighbours of $v_j$.  This is not possible unless $v_{j-1} = v_1$, i.e., $j=2$. By Lemma \ref{nonextendable1}(2) this implies that $v_1 \nsim v_{t-1}$. But now $v_1 \nsim \{v_{j+1}, v_{t-1}\}$ and $v_1$ is non-adjacent with the two off-cycle neighbours of $v_0$. This is not possible by Lemma \ref{nonextendable2} (3).

Suppose $v_0$ has exactly one off-cycle neighbour $x$. Since $G$ is locally Dirac,  $x$ has at least four neighbours in $S$ of which at least two are also neighbours of $v_1$. Let $v_j$ be such a common neighbour of $x, v_0$ and $v_1$ that is not $v_2$. By Lemmas \ref{nonextendable1} (1), (2) and (3), $v_{j+1} \nsim \{x,v_1,v_{j-1}\}$. So $v_{j+1} \sim v_0$, since $G$ is locally Dirac. Hence $x \sim (S-\{v_{j+1}\})$ and by Lemma \ref{nonextendable1} (3), $v_1 \nsim v_{t-1}$. But now there are at least three vertices of $S$ adjacent with both $x$ and $v_1$ of which at least two, say $v_j$ and $v_k$, are not $v_2$. By Lemma \ref{nonextendable1} (1), $\{v_{j+1},v_{k+1}\} \nsim x$ and since at least four vertices of $S$ are adjacent with $x$ either $v_{j+1}$ or $v_{k+1}$ is not adjacent with $v_0$, say the former. But now $v_{j+1}$ has at least four non-adjacencies in $\langle N(v_j) \rangle$, which is not possible.

\noindent{\bf Case 4} Suppose $d=9$. By Lemma \ref{nonextendable2} (3), each neighbour of an attachment vertex  has at most three non-neighbours. So $v_0$ has at most three off-cycle neighbours. Suppose $v_0$ has three off-cycle neighbours. Then $\{v_1, v_{t-1}\} \sim S$ and since each off-cycle neighbour has at least three neighbours in $S$, there is a vertex $v_j \in S$ such that $v_j$ is adjacent with all three off-cycle neighbours of $v_0$. By Lemmas \ref{nonextendable1} (1) and (3), $v_{j+1}$ is non adjacent with these three off-cycle neighbours of $v_j$ and $v_{j+1} \nsim v_{j-1}$, contrary to Lemma \ref{nonextendable2} (3). Suppose $v_0$ has two off-cycle neighbours. Since $G$ is locally Dirac, there are at least three vertices in $S$ that are adjacent with both off-cycle neighbours of $v_0$. Of these at least two are adjacent with $v_1$ and among these at least one, call it $v_j$,  is not $v_2$. So, by Lemmas \ref{nonextendable1} (1), (2) and (3), $v_{j+1} \nsim \{v_1,v_{j-1}\}$ and $v_{j+1}$ is not adjacent with both off-cycle neighbours of $v_0$, contrary to Lemma \ref{nonextendable2} (3).

\noindent{\bf Case 5} Suppose $d=10$.  Then $v_0$ has at most four off-cycle neighbours and since $\Delta \le 11$, every vertex has at most four non-neighbours in the neighbourhood of any one of its neighbours.\\
{\bf Subcase 5.1} Suppose $v_0$ has four off-cycle neighbours. Then there is some $v_j$ in $S$ such that $j \ne 2$ such that $v_j$ is adjacent with at least two off-cycle neighbours. Since $G$ is locally Dirac $\{v_1,v_{t-1}\} \sim S$ and $v_1 \sim v_{t-1}$. By Lemmas \ref{nonextendable1} (1), (2) and (3), $v_{j+1} \nsim \{v_1,v_{j-1}, v_0\}$ and $v_{j+1}$ is not adjacent with the off-cycle neighbours of $v_j$. Hence $v_{j+1}$ has at least five non-neighbours in $\langle N(v_j) \rangle$, contrary to Lemma \ref{nonextendable2} (3).

\noindent{\bf Subcase 5.2} Suppose $v_0$ has three off-cycle neighbours. At least two of the vertices of $S$ are adjacent with at least two off-cycle neighbours of $v_0$ and at least one of these vertices, call it $v_j$, is adjacent with $v_1$. If $v_1 \sim v_{t-1}$, then, by Lemmas \ref{nonextendable1} (1), (2) and (3), $v_{j+1} \nsim \{v_0, v_1, v_{j-1}\}$ and $v_{j+1}$ is non-adjacent with at least two off-cycle neighbours of $v_1$. By Lemma \ref{nonextendable1} (2), $j \ne 2$. So $v_{j+1}$ has five non-neighbours in $\langle N(v_j) \rangle$. By Lemma \ref{nonextendable2} (3), this is not possible. So $v_1 \nsim v_{t-1}$. Hence $\{v_1,v_{t-1}\} \sim S$. Suppose some vertex $v_j$ of $S$ is adjacent with all three off-cycle neighbours of $v_0$. Then either $j \ne 2$ or $j \ne t-2$. We consider the case where $j \ne 2$ as the other case can be argued similarly. By Lemmas \ref{nonextendable1} (1), (2) and (3), $v_{j+1}$ has five non-adjacencies:  $v_1,v_{j-1}$ and three off-cycle neighbours of $v_j$; contrary to Lemma \ref{nonextendable2} (3). So every vertex of $S$ is adjacent with at most two off-cycle neighbours of $v_0$. So there are are exactly four vertices in $S$ that are adjacent with exactly two off-cycle neighbours of $v_0$ and the fifth vertex of $S$ is adjacent with one or two vertices of $S$. There are at least three vertices of $S$ adjacent with two off-cycle neighbours of $v_0$ and with $v_1$. At least two of these, call them $v_j$ and $v_k$, are not $v_2$. By Lemmas \ref{nonextendable1} (1), (2) and (3), $v_{j+1} \nsim \{v_1, v_{j-1}\}$ and $v_{j+1}$ is non-adjacent with the two off-cycle neighbours of $v_j$ that are also neighbours of $v_0$. So, by Lemma \ref{nonextendable2} (3), $v_{j+1} \sim v_0$ and hence $v_{j+1}$ is adjacent with the off-cycle neighbour of $v_0$ that is not a neighbour of $v_j$. Similarly  $v_{k+1}$ is adjacent with $v_0$ and the off-cycle neighbour of $v_0$ that is not adjacent with $v_k$. So $S$ has at least two vertices that are adjacent with exactly one off-cycle neighbour of $v_0$. From the case we are in this is not possible.

\noindent{\bf Subcase 5.3} Suppose $v_0$ has two off-cycle neighbours. Assume first that $v_1 \nsim v_{t-1}$. Since $v_1$ and $v_{t-1}$ each have at least four neighbours in $S$, $|N(v_1) \cap N(v_{t-1}) \cap S| \ge 4$. Also since $x$ and $y$ each have at least five neighbours in $S$, $|N(x) \cap N(y) \cap S| \ge 2$. Suppose there is a $v_{i_j} \in S$ adjacent with $x,y,v_1$ and $v_{t-1}$. If $i_j \not\in \{2,t-2\}$, then, by Lemmas \ref{nonextendable1} (1), (2) and (3), we have the following non-adjacencies in $\langle N(v_{i_j}) \rangle$: $v_{i_j+1} \nsim \{x,y,v_1,v_{i_j-1}\}$ and $v_{i_j-1} \nsim \{x,y,v_{t-1}, v_{i_j+1}\}$. So, by Lemma \ref{nonextendable2} (3), $v_0 \sim \{v_{i_j-1}, v_{i_j+1}\}$. Hence $x$ and $y$ are both adjacent with all vertices of $S' = S -\{v_{i_j-1}, v_{i_j+1}, v_{i_j}\}$. So every vertex of $S'$ is adjacent with all four of the vertices $x,y,v_1,$ and $v_{t-1}$.  Since $|S'|=3$, there is a vertex $v_{i_k} \in S'$ such that $i_k \not\in \{2, t-2\}$. As for $v_{i_j}$ we see that $v_{i_k-1}, v_{i_k+1} \in S$. WOLG $i_j < i_k$. So $v_{i_j -1},v_{i_j +1}$ and $v_{i_k+1}$ are distinct vertices of $S$ each of which is non-adjacent with both $v_1$ and $v_{t-1}$, contrary to the fact that $|N(v_1) \cap N(v_{t-1}) \cap S| \ge 4$. So $i_j$ is $2$ or $t-2$, say the former. By Lemmas \ref{nonextendable1} (1) and (3), $v_{i_j+1} \nsim \{v_{i_j-1},x,y\}$. So by considering $\langle N(v_{i_j}) \rangle$, we see that $v_{i_j+1}$ is adjacent with at least one of $v_0$ and $v_{t-1}$.  If $v_{i_j+1} \sim v_0$, then it follows, since $v_{i_j+1} \nsim v_1$, that $v_{i_j+1}$ is not a common neighbour of $v_1$ and $v_{t-1}$ and since $v_{i_j+1} \nsim \{x,y\}$, $S'$ must have three common neighbours of $x$ and $y$. So $S'$ contains two vertices that are common neighbours of $v_1,v_{t-1},x$ and $y$. At least one of these two vertices of $S'$ is not $v_{t-2}$. By the above this is not possible. Hence $v_{i_j+1} \nsim v_0$ and $v_{i_j+1} \sim v_{t-1}$. If $v_{t-2} \sim v_0$, then, by Lemma \ref{nonextendable1} (4), neither $x$ nor $y$ is adjacent with $v_{t-2}$ (since $v_{t-1} \sim \{v_{i_j}, v_{i_j+1}\}$). Observe that $v_{t-2} \nsim v_1$; otherwise, $v_0xv_{i_j}v_{t-1}v_{i_j+1} \overrightarrow{C}v_{t-2}v_1v_0$ is an extension of $C$. So there is a vertex in $S -\{v_{i_j}, v_{t-2}\}$ that is adjacent with all four of the vertices in $\{v_1,v_{t-1},x,y\}$, which by the above is not possible. So $v_{t-2} \nsim v_0$. Now  $S -\{v_{i_j}\}$ contains at least one additional common neighbour of of $x$ and $y$, call it $v_{i_k}$. By the above, $v_{i_k}$ is not adjacent with both $v_1$ and $v_{t-1}$. Suppose $v_{i_k}$ is adjacent with $v_1$ or $v_{t-1}$, say the former. Using Lemma \ref{nonextendable1}, we have $v_{i_k+1} \nsim \{x,y, v_1, v_{i_k-1}\}$. So, by Lemma \ref{nonextendable2} (3), $v_{i_k+1} \sim v_0$. This forces another vertex in $S-\{v_{i_j}, v_{i_k}, v_2, v_{t-2}\}$  adjacent with all four vertices in $\{x,y,v_1, v_{t-1}\}$, which, by the above, is not possible.

Assume next that $v_1 \sim v_{t-1}$. Let $S =\{v_{i_1}, v_{i_2}, \ldots, v_{i_6}\}$ where $i_1 < i_2 < \ldots < i_6$. By Lemma \ref{nonextendable1} (2), and (3), $i_1 \ne 2$ and $i_6 \ne t-2$ and if $x \sim v_{i_l}$, then $v_0 \nsim \{v_{i_l-1},v_{i_l+1}\}$. Since $x$ and $y$ are each adjacent with  at least four vertices of $S$, $|N(x) \cap N(y) \cap S| \ge 2$. Let $v_{i_j},v_{i_k} \in N(x) \cap N(y) \cap S$. Suppose $v_{i_j}$ or $v_{i_k}$ is adjacent with $v_1$ or $v_{t-1}$. We will assume $v_{i_j} \sim v_1$. All other cases can be argued similarly. By the above, $i_j+1 \ne t-1$ and $i_j-1 \ne 1$ and $v_{i_j+1} \nsim v_0$. Using these facts and Lemmas \ref{nonextendable1} (1), (2) and (3), we see that in $\langle N(v_{i_j}) \rangle$, $v_{i_j+1} \nsim \{x,y,v_1,v_{i_j-1}, v_0\}$. By Lemma \ref{nonextendable2} (3), this is not possible. So $\{v_1, v_{t-1}\} \nsim \{v_{i_j}, v_{i_k}\}$. So every vertex of $S -\{v_{i_j}, v_{i_k}\}$ is adjacent with both $v_1$ and $v_{t-1}$ and exactly one of $x$ or $y$. Since $v_{i_j}$ is adjacent with at least five vertices of $N(v_0)$ and since $v_{i_j} \nsim \{v_1, v_{t-1}\}$, it follows that $v_{i_j}$ has at least two neighbours in $S - \{v_{i_j}, v_{i_k}\}$. Let $v_{i_a} \in S-\{v_{i_j}, v_{i_k}\}$ be such that $v_{i_j} \sim v_{i_a}$. We may assume $v_{i_a} \sim x$. By Lemmas \ref{nonextendable1} (1), (2) and (3), we have the following non-adjacencies in $\langle N(v_{i_a}) \rangle$: $v_{i_a+1} \nsim\{x,v_0,v_1,v_{i_a-1}\}$ and $v_{i_a-1} \nsim \{x,v_0,v_{t-1}, v_{i_a+1}\}$. So by Lemma \ref{nonextendable2} (3), both $v_{i_a+1}$ and $v_{i_a -1}$ are adjacent with every other neighbour of $v_{i_a}$. Hence $v_{i_j} \sim \{v_{i_a+1}, v_{i_a-1}\}$. As before, we see that in $\langle N(v_{i_j}) \rangle$ we have the following non-adjacencies $v_{i_j+1} \nsim \{x,y, v_0, v_{i_j-1}, v_{i_a+1}\}$. Hence, by Lemma \ref{nonextendable2} (3),  $i_a+1=i_j-1$. Using Lemmas \ref{nonextendable1} (1), (2) and (3) and the above observation, we see that $v_{i_a+1} \nsim \{v_{t-1}, v_0, v_1, v_{i_a-1},x\}$, contrary to Lemma \ref{nonextendable2} (3).

\noindent{\bf Subcase 5.4}  $v_0$ has exactly one off-cycle neighbour $x$. Let $S =\{v_{i_1}, v_{i_2}, \ldots, v_{i_7}\}$ be the cycle neighbours of $v_0$ other than $v_1$ and $v_{t-1}$ where $i_1 < i_2 < \ldots < i_7$.\\
\noindent{\bf Subcase 5.4.1} $v_1 \nsim v_{t-1}$. Then $|N(v_1) \cap N(v_{t-1}) \cap S| \ge 3$ and $|N(x) \cap S| \ge 5$. So there is at least one vertex in $S$ adjacent with $x, v_1$ and $v_{t-1}$. Suppose first that there is exactly one such vertex, call it $v_{i_j}$. Then there are exactly three vertices in $S' =N(v_1) \cap N(v_{t-1}) \cap S$, and every vertex of $S-S'$ must be adjacent with $x$ and exactly one of $v_1$ and $v_{t-1}$. Suppose $v_{i_j}$ is adjacent with a vertex $S-S'$, say $v_{i_a}$. We may assume $v_{i_a}$ is adjacent with $x$ and $v_1$. The case where $v_{i_a} \sim \{x,v_{t-1}\}$ can be argued similarly. Suppose first that $i_a < i_j$. By Lemmas \ref{nonextendable1} (1), (2) and (3) , $v_{i_j+1} \nsim \{x, v_1, v_{i_j-1}\}$. Since $v_{i_j+1}$ is non-adjacent with $x$, it is not in $S-S'$ and since it is not adjacent with both $v_1$ and $v_{t-1}$ it is not in $S'$. So $v_{i_j+1} \nsim v_0$. So $v_{i_j+1}$ has four non-adjacencies in $\langle N(v_{i_j}) \rangle$. By Lemma \ref{nonextendable2} (3), it follows that $v_{i_j+1} \sim v_{i_a}$. Using similar reasoning we now see that $v_{i_a+1} \nsim \{x, v_1, v_{i_a-1}, v_0,v_{i_j+1}\}$. This contradicts Lemma \ref{nonextendable2} (3) unless $i_a=2$. Moreover, $v_{i_a+1} \sim v_{i_j}$. Again using Lemmas \ref{nonextendable2} (1) - (4), and the case we are considering, we see that $v_{i_j-1} \nsim \{x, v_1, v_{t-1}, v_{i_j+1}, v_0\}$. This contradicts Lemma \ref{nonextendable2} (3) unless $i_j=t-2$. Moreover, $v_{i_j-1} \sim \{v_{i_a}, v_{i_a+1}\}$. By assumption $v_{i_a} \nsim v_{i_j+1} (=v_{t-1})$. By Lemmas \ref{nonextendable1} (1) - (4) we also see that $v_{i_j+1} \nsim \{x, v_1, v_{i_j-1}, v_{i_a+1}\}$. This contradicts Lemma \ref{nonextendable2} (3). So $i_a > i_j$. Since $v_{i_a} \nsim v_{t-1}$, $i_a \ne t-2$. Assume first that $i_j \ne 2$. By Lemmas \ref{nonextendable1} (1), (2) and (3), $v_{i_j-1} \nsim \{x, v_{i_j+1}, v_{t-1}\}$. Since $v_{i_j-1}$ is not adjacent with both $v_1$ and $v_{t-1}$, $v_{i_j-1} \not\in S'$ and since $v_{i_j-1} \nsim x$, $v_{i_j-1} \not\in (S-S')$. Hence $v_{i_j-1} \nsim v_0$. So, by Lemma \ref{nonextendable2}(3), it follows that $v_{i_j-1} \sim v_{i_a}$. In a similar manner we see that $v_{i_a+1} \nsim \{x, v_0, v_1, v_{i_a-1}\}$ and hence $v_{i_a+1} \sim v_{i_j}$. We can now argue in a similar manner that $v_{i_j+1} \nsim \{x, v_1, v_0, v_{i_j-1}, v_{i_a+1}\}$.  This produces a contradiction to Lemma \ref{nonextendable2} (3). So $i_j=2$. By Lemmas \ref{nonextendable1} (1) - (4), $v_{i_a+1} \nsim \{x, v_1, v_0, v_{i_a-1}\}$. So $v_{i_a+1} \sim v_{i_j}$. Since $v_{i_j+1} \nsim \{x, v_1, v_0, v_{i_a+1}\}$, it follows that $v_{i_j+1} \sim v_{i_a}$. But now $v_{i_a+1}$ has five non-adjacencies in $\langle N(v_{i_a} \rangle$, namely $v_{i_a+1} \nsim \{x, v_1, v_0, v_{i_a-1}, v_{i_j+1}\}$ unless $v_{i_a-1} = v_{i_j+1}$. So $v_{i_a+1} \sim v_{i_j}$. But now $v_{i_j+1}$ has five distinct non-adjacencies in $\langle N(v_{i_j}) \rangle$, namely, $v_{i_j+1} \nsim \{x,v_1, v_0, v_{t-1}, v_{i_a+1} \}$. Hence $v_{i_j} \nsim (S-S')$.

By Lemma \ref{nonextendable2} (3), $v_{i_j} \sim S'-\{v_{i_j}\}$. Let $S'-\{v_{i_j}\} =\{v_{i_l}, v_{i_k}\}$, where $i_l < i_k$. Assume first that $i_l<i_j<i_k$. Observe, by Lemma \ref{nonextendable1} (2), that $i_l+1 \ne i_j$ and $i_j+1 \ne i_k$. Using Lemmas \ref{nonextendable1} (1), (2) and (3) and the case we are considering, we see that $v_{i_j+1} \nsim \{x, v_1, v_0,v_{i_j-1}\}$. So, by Lemma \ref{nonextendable2} (3), $v_{i_j+1} \sim \{v_{i_l}, v_{t-1}\}$. Similarly $v_{i_j-1} \sim \{v_1, v_{i_k}\}$. Observe that $v_{i_k-1} \nsim v_{t-1}$; otherwise, $v_0xv_{i_k} \overrightarrow{C}v_{i_k-1}v_{t-1} \overleftarrow{C}v_{i_k}v_{i_j-1} \overleftarrow{C}v_1v_0$ is an extension of $C$. By Lemma \ref{nonextendable2} (5), $v_{i_k-1} \nsim \{v_1, v_{i_j-1}\}$. Also since $v_{i_k} \sim v_1$, it follows from Lemma \ref{nonextendable1} (2) that $v_{i_k-1} \nsim x$. So from the case we are in $v_{i_k-1} \nsim v_0$. So, by Lemma \ref{nonextendable2} (3), $v_{i_k-1} \sim v_{i_j}$. But now $v_0xv_{i_j}v_{i_k-1} \overleftarrow{C} v_{i_j+1}v_{t-1} \overleftarrow{C} v_{i_k}v_{i_j-1} \overleftarrow{C}v_1v_0$ is an extension of $C$.

So we may assume $i_l$ and $i_k$ are either both larger or both smaller than $i_j$, say the former.  The case where both are smaller can be argued similarly. Assume first that $i_j \ne 2$. Since $v_{i_j} \sim \{x, v_0\}$ and $v_{i_l} \sim v_1$, it follows from Lemma \ref{nonextendable1} (2) that $i_l \ne i_j+1$. By Lemmas \ref{nonextendable1} (1), (2) and (3), $v_{i_j-1} \nsim \{x, v_{t-1}, v_{i_j+1}\}$. So, from the case we are in, we see that $v_0 \nsim v_{i_j-1}$. Thus, by Lemma \ref{nonextendable2} (3), $v_{i_j-1} \sim \{v_1, v_{i_l}, v_{i_k}\}$. Similarly $v_{i_j+1} \sim \{v_{t-1}, v_{i_l}, v_{i_k}\}$. By Lemma \ref{nonextendable2} (5), $v_{i_l-1} \nsim \{v_1, v_{i_j-1}\}$. Since $v_{i_l} \sim v_1$, it follows that $v_{i_l-1} \nsim x$, by Lemma \ref{nonextendable1} (2). So $v_{i_l-1} \nsim v_0$. Also $v_{i_l-1} \nsim v_{t-1}$; otherwise, $v_0xv_{i_j} \overrightarrow{C} v_{i_l-1} v_{t-1} \overleftarrow{C} v_{i_l}v_{i_j-1} \overleftarrow{C}v_1v_0$ is an extension of $C$. Hence, by Lemma \ref{nonextendable2} (3), $v_{i_l-1} \sim v_{i_j}$. But now $v_0xv_{i_j}v_{i_l-1} \overleftarrow{C}v_{i_j+1}v_{t-1} \overleftarrow{C}v_{i_l}v_{i_j-1} \overleftarrow{C}v_1v_0$ is an extension of $C$.

Hence $i_j=2$. By Lemma \ref{nonextendable2} (4), $v_{i_l-1} \nsim v_1$. Since $v_{i_l} \sim v_1$ and $v_0 \sim x$, it follows from Lemma \ref{nonextendable1} (2), that $v_{i_l-1} \nsim x$ and hence from the case we are in $v_{i_l-1} \nsim v_0$. By Lemma \ref{nonextendable2} (4), $v_{i_l-1} \nsim v_{i_l+1}\}$. Also $v_{i_l-1} \nsim v_{t-1}$; otherwise, $v_0xv_{i_j} \overrightarrow{C}v_{i_l-1}v_{t-1} \overleftarrow{C} v_{i_l}v_1v_0$ is an extension of $C$. So, by Lemma \ref{nonextendable2} (3), $v_{i_l-1} \sim v_{i_j}$. By Lemmas \ref{nonextendable1} (1) and (2) and the case we are in $v_{i_j+1} \nsim \{x, v_1, v_0\}$. Also $v_{i_j+1} \nsim v_{t-1}$; otherwise, $v_0xv_{i_j}v_{i_l-1}\overleftarrow{C}v_{i_j+1} v_{t-1} \overleftarrow{C}v_{i_l}v_1v_0$ is an extension of $C$. So $v_{i_j+1} \sim \{v_{i_l}, v_{i_l-1}\}$. Observe that $v_{i_j+1} \nsim v_{i_l+1}$; otherwise, $v_0xv_{i_j}v_{i_l-1} \overleftarrow{C} v_{i_j+1} v_{i_l+1} \overrightarrow{C} v_{t-1}v_{i_l}v_1v_0$ is an extension of $C$. Using this fact and reasoning as before, we see that $v_{i_l+1} \nsim \{v_1, v_0, v_{i_j+1}, v_{i_l-1}\}$. So if $i_j+1 \ne i_l-1$, then $v_{i_l+1} \sim v_{i_j}$. However then $v_{i_j-1} (=v_1)$ has five non-adjacencies in $\langle N(v_{i_j}) \rangle$, namely, $v_{i_j-1} \nsim \{x, v_{t-1}, v_{i_j+1}, v_{i_l-1}, v_{i_l+1}\}$. Hence $i_l-1 =i_j+1 =3$. If $i_k \ne t-2$, we can show, using the adjacencies for $v_{i_j}$ and $v_{i_k}$, that $v_{i_j-1}$ has five non-adjacencies in $\langle N(v_{i_j}) \rangle$.

So $i_k=t-2$. Since we have already shown that $v_{i_j+1}$ has four non-adjacencies in $\langle N(v_{i_j}) \rangle$, namely $v_{i_j+1} \nsim \{x, v_1, v_0, v_{t-1}\}$, we have $v_{i_j+1} \sim v_{i_k}$. By Lemma \ref{nonextendable1} (4) and \ref{nonextendable2} (4), $v_{i_k-1} \nsim \{v_1, v_{i_k+1}(=v_{t-1})\}$. From the case we are in, we see that $v_{i_k-1} \nsim v_0$. If $v_{i_k-1} \sim v_{i_j+1}$, then $v_0xv_{i_j}v_{t-1}v_{i_k}v_{i_j+1}v_{i_k-1} \overleftarrow{C}v_{i_l}v_1v_0$ is an extension of $C$. So $v_{i_k-1}$ has four non-adjacencies in $\langle N(v_{i_k}) \rangle$. Since $\Delta =11$, it follows from Lemma \ref{nonextendable2} (3), that $v_{i_k-1} \sim v_{i_j}$. But now $v_{i_j+1}$ has five non-adjacencies in $\langle N(v_{i_j}) \rangle$, namely, $v_{i_j+1} \nsim \{x, v_1, v_0, v_{t-1}, v_{i_k-1}\}$.

%check next part
So we conclude that $|N(v_1) \cap N(v_{t-1}) \cap N(x) \cap S| \ge 2$.
Let $T=\{x,v_1,v_{t-1}\}$.  Assume first that each vertex of $S$ is adjacent with at least one vertex of $T$. Assume next that $\{x,v_0\} \sim \{v_2, v_{t-2}\}$. Assume also that $S-\{v_2,v_{t-2}\}$ contains a vertex $v_{i_j}$ such that $v_{i_j} \sim T$. By Lemmas \ref{nonextendable1} (1), (2) and (4), $v_{i_j-1} \nsim \{x,v_1,v_{t-1}, v_{i_j+1}\}$. Since $v_{i_j -1} \nsim \{x,v_1,v_{t-1}\}$, it follows from the case we are in that $v_{i_j-1} \not\in S$; so $v_{i_j-1} \nsim v_0$, contrary to Lemma \ref{nonextendable2} (3).

So every vertex of $S-\{v_2, v_{t-2}\}$ is adjacent with at most two vertices of $T$. By the case we are in, it thus follows that $\{v_2, v_{t-2}\} \sim T$. Moreover, there is at most one vertex of $S-\{v_2, v_{t-2}\}$ that is adjacent with exactly one vertex of  $T$.  There exist vertices $v_{i_q}, v_{i_r},v_{i_s} \in S-\{v_2, v_{t-2}\}$ such that $v_{i_q} \sim \{x,v_1\}$, $v_{i_r} \sim \{x, v_{t-1}\}$ and $v_{i_s} \sim \{v_1, v_{t-1}\}$. Let $v_{i_a}$ and $v_{i_b}$ be the vertices of $S-\{v_2,v_{t-2}, v_{i_q}, v_{i_r}, v_{i_s}\}$. At least one of these two vertices is adjacent with exactly two vertices of $T$, say $v_{i_a}$ is such a vertex. We show next that $v_{i_q} \nsim \{v_{i_r}, v_{i_s}\}$.

Assume first that $v_{i_q} \sim v_{i_r}$. We consider the case where $i_q < i_r$. The case where $i_q > i_r$ can be argued similarly. By Lemmas \ref{nonextendable1} (1) and (4), $\{v_{i_q-1}, v_{i_r+1}\} \nsim \{x,v_1,v_{t-1}\}$. From the case we are in, it follows that $v_0 \nsim \{v_{i_q-1}, v_{i_r+1}\}$.  By Lemma \ref{nonextendable1} (3), $v_{i_q-1} \nsim v_{i_q+1}$ and $v_{i_r+1} \nsim v_{i_r-1}$. So $v_{i_q-1}$ ($v_{i_r+1}$), has four non-adjacencies in $\langle N(v_{i_q}) \rangle$, ( $\langle N(v_{i_r}) \rangle$, respectively). So, by Lemma \ref{nonextendable2} (3), $v_{i_q-1} \sim v_{i_r}$ and $v_{i_r+1} \sim v_{i_q}$. By another application of Lemma \ref{nonextendable2} (3), it follows that $v_{i_r+1} \sim v_{i_q-1}$. Now we see that $C$ has an extension, namely, $v_0xv_2 \overrightarrow{C} v_{i_q-1}v_{i_r+1} \overrightarrow{C} v_{t-1}v_{i_r} \overleftarrow{C} v_{i_q}v_1v_0$, a contradiction. So $v_{i_q} \nsim v_{i_r}$.

Suppose $v_{i_q} \sim v_{i_s}$. We assume $i_q < i_s$. The case where $i_q > i_s$ can be argued similarly. By Lemma \ref{nonextendable1} (2), $i_s > i_q+1$. By Lemmas \ref{nonextendable1} (1), (2) and (4), $v_{i_q-1} \nsim \{x, v_1, v_{t-1}\}$. So, by the case we are in, $v_{i_q-1} \nsim v_0$. By Lemma \ref{nonextendable1} (3), $v_{i_q-1} \nsim v_{i_q+1}$.  So $v_{i_q-1} \nsim \{x,v_0,v_1,v_{i_q+1}\}$ and hence by Lemma \ref{nonextendable2} (3), $v_{i_q-1}$ is adjacent with every other neighbour of $v_{i_q}$. So $v_{i_q-1} \sim v_{i_s}$. We now consider non-adjacencies of $v_{i_s+1}$ in $\langle N(v_{i_s}) \rangle$. By Lemma \ref{nonextendable1} (4), $v_{i_q+1} \nsim \{v_1,v_{t-1}\}$. Since $v_{i_s} \sim v_{t-1}$, it follows from Lemma \ref{nonextendable1} (2), that $v_{i_s+1} \nsim x$. Thus, from the case we are in, $v_{i_s+1} \nsim v_0$. By Lemma \ref{nonextendable2} (4), $v_{i_s+1} \nsim v_{i_s-1}$. Hence $v_{i_s+1}$ has four non-adjacencies in $\langle N(v_{i_s}) \rangle$. By Lemma \ref{nonextendable2} (3) and since $\Delta \le 11$, $v_{i_s+1} \sim v_{i_q-1}$. Hence $v_0xv_2 \overrightarrow{C}v_{i_q-1}v_{i_s+1} \overrightarrow{C}v_{t-1}v_{i_s} \overleftarrow{C}v_{i_q}v_1v_0$ is an extension of $C$ which is not possible. Hence $v_{i_q} \nsim v_{i_s}$.

We now show that $v_{i_q} \nsim v_{i_a}$. If $v_{i_a}$ is adjacent with $\{x,v_{t-1}\}$ or $\{v_1,v_{t-1}\}$, this follows from the above. Suppose $v_{i_a} \sim \{x,v_1\}$. WOLG may assume $i_q < i_a$. We can argue as in the previous case that $v_{i_q-1} \nsim \{x,v_0,v_1,v_{i_q+1}\}$. So, by Lemma \ref{nonextendable2} (3), $v_{i_q-1} \sim v_{i_a}$. Similarly $v_{i_a+1} \nsim \{x,v_0,v_1,v_{i_a-1}\}$ and so $v_{i_a+1} \sim \{v_{i_q}, v_{i_q-1}\}$. Observe that by Lemma \ref{nonextendable1} (1), $i_a \ne i_q+1$. We can argue as for $v_{i_q-1}$, that $v_{i_q+1} \nsim \{x,v_0,v_1,v_{i_q-1}\}$. So by Lemma \ref{nonextendable2} (3), $v_{i_q+1} \sim v_{i_a+1}$. This contradicts Lemma \ref{nonextendable1} (2). So $v_{i_q} \nsim v_{i_a}$. Therefore $v_{i_q} \nsim \{v_{i_a}, v_{i_r}, v_{i_s}, v_{t-1}\}$.  By Lemma \ref{nonextendable2} (3), $v_{i_q} \sim \{v_2, v_{i_b}, v_{t-2}\}$. As before we see that $v_{i_q-1} \nsim\{x, v_0, v_1, v_{i_q+1}\}$. So, by Lemma \ref{nonextendable2} (3), $v_{i_q-1} \sim \{v_2,v_{t-2}\}$.  By Lemmas \ref{nonextendable1}  (1) and (2), $v_{t-3} \nsim \{x,v_1,v_{t-1}, v_{i_q-1}\}$. So by Lemma \ref{nonextendable2} (3), $v_{t-3} \sim v_0$ which is not possible by the case we are considering.

So $v_2$ and $v_{t-2}$ are not both adjacent with $x$ and $v_0$. Suppose now that exactly one of $v_2$ and $v_{t-2}$, say $v_2$, is adjacent with both $x$ and $v_0$. Then there is a vertex $v_{i_j} \in S-\{v_2, v_{t-2}\}$ such that $v_{i_j} \sim T$. By Lemmas \ref{nonextendable1}  (1), (2), (3) and (4), $v_{i_j-1} \nsim \{x, v_1, v_{i_j+1}, v_{t-1}\}$.  So by Lemma \ref{nonextendable2} (3), $v_{i_j-1} \sim v_0$. This is not possible since in this case we are assuming that every vertex of $S$ is adjacent with at least one vertex of $T$.

So neither $v_2$ nor $v_{t-2}$ is adjacent with both $v_0$ and $x$.  Let $v_{i_j}, v_{i_k} \in S$ be such that $\{v_{i_j}, v_{i_k}\} \sim T$ where $2 < i_j < i_k < t-2$. By Lemmas \ref{nonextendable1} (1), (2) and (3), $v_{i_j-1} \nsim \{x, v_{i_j+1}, v_{t-1}\}$. From the case we are considering, $v_{i_j-1}$ is either adjacent with both $v_0$ and $v_1$ or is non-adjacent with both $v_0$ and $v_1$. By Lemma \ref{nonextendable2} (3), $v_{i_j-1} \sim \{v_0,v_1\}$. Similarly we can argue that $v_{i_j+1} \sim \{v_0, v_{t-1}\}$. So $S$ contains at least two vertices that are adjacent with exactly one vertex of $T$, contrary to the assumptions of the case we are in.

 %$v_{i_k+1} \sim \{v_0, v_{t-1}\}$ and that $x \nsim \{v_{i_j+1}, v_{i_k+1}\}$. So $x \nsim \{v_1, v_{i_j-1}, v_{i_j+1}, v_{i_k+1}, v_{t-1}\}$, contrary to Lemma \ref{nonextendable2} (3).

So there is at least one vertex of $S$ that is not adjacent with any vertex of $T$. Observe also, since each vertex of $T$ is adjacent with at least five vertices of $S$, that there are at most two vertices of $S$ that are not adjacent with any vertex of $T$.

Suppose first that there is exactly one vertex of $S$, call it $v_{i_a}$ that is not adjacent with any vertex of $T$. Let $S'=S \cap N(v_1) \cap N(v_{t-1}) \cap N(x)$. Then $|S'|$ equals $3$ or $4$.  Suppose first that $|S'| =3$. Then the vertices of $S-(S' \cup \{v_{i_a}\})$ are each adjacent with exactly two vertices of $T$. Suppose $S'=\{v_{i_j}, v_{i_k}, v_{i_l}\}$, where $i_j < i_k < i_l$, and let $S-(S' \cup \{v_{i_a}\})=\{v_{i_r},v_{i_s},v_{i_t}\}$, where $v_{i_r} \sim \{x,v_1\}$, $v_{i_s} \sim \{x, v_{t-1}\}$ and $v_{i_t} \sim \{v_1, v_{t-1}\}$.

Suppose first that $i_j=2$. By Lemma \ref{nonextendable1} (1), $i_j+1 \ne i_k$. Suppose that $j_j+1 =i_k-1$. Then, by Lemmas \ref{nonextendable1} (1), (2) and (3),  $v_{i_k-1}(=v_{i_j+1}) \nsim \{x, v_1, v_{t-1}, v_{i_k+1}\}$. So, by Lemma \ref{nonextendable2} (3), $v_{i_k-1} \sim v_0$. Hence $v_{i_k-1} = v_{i_a}$.  Again, by Lemmas \ref{nonextendable1} (1), (3) and (4), $v_{i_k+1} \nsim \{x, v_1, v_{i_k-1}\}$. So, by Lemma \ref{nonextendable2} (3), $v_{i_k+1}$ must be adjacent with at least one of $v_0$ and $v_{t-1}$. Since $v_{i_k+1}$ is not $v_{i_a}$ and from the case we are in, $v_{i_k+1} \nsim v_0$. Hence $v_{i_k+1} \sim v_{t-1}$. Thus, by Lemma \ref{nonextendable1} (4), $v_{t-2}$ is not adjacent with both $x$ and $v_0$. Hence $v_{i_l} \ne t-2$. So, by Lemmas \ref{nonextendable1}  (1) - (4) and from the case we are in, $v_{i_l-1}$ has five non-adjacencies in $\langle N(v_{i_l}) \rangle$, namely,  $v_{i_l-1} \nsim \{x, v_1, v_{t-1}, v_0, v_{i_l+1}\}$ contrary to Lemma \ref{nonextendable2} (3). Hence $i_k > i_j+2$.  By Lemmas \ref{nonextendable1}  (1) - (4), $v_{i_k-1} \nsim \{x,v_1,v_{t-1}, v_{i_k+1}\}$. So by Lemma \ref{nonextendable2} (3), $v_{i_k-1} \sim v_0$. Hence $v_{i_k-1} = v_{i_a}$. Observe that $i_l \ne t-2$; otherwise, we can argue using Lemmas \ref{nonextendable1}  (1) - (4) and the fact that $v_{i_k+1} \ne v_{i_a}$, that $v_{i_k+1}$ has five non-adjacencies in $\langle N(v_{i_k} \rangle$, namely, $v_{i_k+1} \nsim \{x, v_1, v_{t-1}, v_{i_k-1}\}$. Since $i_l \ne t-2$, we can argue using Lemmas \ref{nonextendable1}  (1) - (4), the case we are in and the fact that $v_{i_l-1} \ne v_{i_a}$, that $v_{i_l-1}$ has five non-adjacencies in $\langle N(v_{i_l}) \rangle$, namely $v_{i_l-1} \nsim \{x, v_1, v_{t-1},v_{i_l+1}, v_0\}$, contrary to Lemma \ref{nonextendable2} (3).

 Hence we may assume that $i_j \ne 2$ and similarly $i_l \ne t-2$. By Lemmas \ref{nonextendable1} (1), (2) and (3), $v_{i_j-1} \nsim \{x, v_{t-1}, v_{i_j+1}\}$. From the case we are in, $v_{i_j-1}$ is not adjacent with both $v_0$ and $v_1$, since every vertex of $S-S'$ is either adjacent with no vertex of $T$ or exactly two vertices of $T$. Using Lemma \ref{nonextendable2} (3), we conclude that $v_{i_j-1}$ is adjacent with exactly one of $v_0$ and $v_1$. Similarly $v_{i_j+1}$ is adjacent with exactly one of $v_0$ and $v_{t-1}$. The same observation can be made for the two neighbours of $v_{i_k}$ on $C$ and the two neighbours of $v_{i_l}$ on $C$. Since $S$ contains exactly one vertex that is not adjacent with any vertices of $T$, it follows that either for at least two vertices of $S'$, say $v_{i_j}$ and $v_{i_k}$ (the other cases can be dealt with in a similar manner) we have $v_0 \nsim \{v_{i_j-1}, v_{i_j+1}, v_{i_k-1}, v_{i_k+1}\}$ or $v_{i_j+1} = v_{i_k-1}$ and $v_{i_j+1} \sim v_0$ or $v_{i_k+1}=v_{i_l-1}$ and $v_{i_k+1} \sim v_0$. In the first case, $v_1 \sim \{v_{i_j-1},v_{i_k-1}\}$ and $v_{t-1} \sim \{v_{i_j+1}, v_{i_k+1}\}$. Since $v_{i_j+1} \sim v_{t-1}$ and $v_{i_k-1} \nsim v_{t-1}$, it follows that $i_j+1 \ne i_k-1$. This contradicts Lemma \ref{nonextendable2} (6) (i) (where ($i=0$, $j=i_j$ and $k=i_k$) this is not possible. In the second case, we assume first that $v_{i_k-1}=v_{i_j+1}$ and $v_{i_j+1} \sim v_0$. In this case $v_{i_k+1} \sim v_{t-1}$ and $v_{i_l-1} \sim v_1$. This again contradicts Lemma \ref{nonextendable2} (6) (i) (with $i=0$, $j=i_j$ and $k=i_k$). (The case where $v_{i_k+1} =v_{i_l-1}$ and $v_{i_k+1}  \sim v_0$ can be proven similarly.)

Suppose $|S'|=4$. In this case $S-(S' \cup\{v_{i_a}\})$ contains two vertices, one of these being adjacent with two vertices of $T$ and the other being adjacent with the third vertex of $T$.  Then there exist two vertices $v_{i_j}, v_{i_k} \in S'-\{v_2,v_{t-2}\}$, where $i_j < i_k$. By Lemma \ref{nonextendable1} (1), $i_j+1 \ne i_k$. Suppose now that $i_j+2=i_k$.  By Lemmas \ref{nonextendable1}  (1), (2) and (3), $v_{i_j+1} \nsim \{x, v_1, v_{t-1}, v_{i_j-1}\}$. So, by Lemma \ref{nonextendable2} (3), $v_{i_j+1} \sim v_0$. Hence $v_{i_j+1}$ is the vertex $v_{i_a}$ of $S$ that is not adjacent with any vertex of $T$. By Lemmas \ref{nonextendable1} (1), (2) and (3), $v_{i_j-1} \nsim \{x, v_{t-1}, v_{i_j+1}\}$. If $v_{i_j-1} \sim v_0$, then from the case we are in and the above observation,  $v_{i_j-1} \sim v_1$. If $v_{i_j-1} \nsim v_0$, then $v_{i_j-1} \sim v_1$, by Lemma \ref{nonextendable2} (3). So in either case we see that $\{v_{i_j-1}, v_{i_j}\} \sim v_1$. So by Lemma \ref{nonextendable1} (4), $v_2$ is not adjacent with $x$. Similarly we can show that $\{v_{i_k},v_{i_k+1} \} \sim v_{t-1}$ and hence that $v_{t-2}$ is not adjacent with  $x$. So there is an $v_{i_l} \in S'-\{v_{i_j}, v_{i_k}\}$ such that $i_l \not\in \{2, t-2\}$. So either $2 < i_l < i_j$ or $i_k < i_l <t-2$. We may assume $2<i_l < i_j$. The case where $t-2> i_l > i_k$ can be argued similarly. From the case we are considering and by the above observation, $i_l+1 \ne i_j-1$.  By Lemmas \ref{nonextendable1}  (1), (2) and (3), $v_{i_l+1} \nsim \{x, v_1, v_{i_l-1}\}$. From the case we are considering and by Lemma \ref{nonextendable2} (3), we see that $v_{i_l+1} \sim v_{t-1}$, regardless whether $v_{i_l+1}$ is adjacent with $v_0$ or not. As before, this contradicts Lemma \ref{nonextendable2} (6). Hence $i_k > i_j+2$. So $v_{i_j}$ or $v_{i_k}$ is not adjacent with $v_{i_a}$ on $C$. We may assume $v_{i_a} \not\in \{v_{i_j-1}, v_{i_j+1}\}$. (The case where $v_{i_a} \not\in \{v_{i_k-1}, v_{i_k+1}\}$ can be argued similarly.) By Lemmas \ref{nonextendable1} (1), (2) and (3) $v_{i_j-1} \nsim \{x, v_{t-1}, v_{i_j+1}\}$. So by Lemma \ref{nonextendable2} (3), $v_{i_j-1}$ is adjacent with at least one of $v_0$ and $v_1$. Since $v_{i_a} \ne v_{i_j-1}$, $v_{i_j-1}$ must be adjacent with $v_1$ regardless of whether it is adjacent with $v_0$ or not. Similarly we can argue that $v_{i_j+1}$ is adjacent with $v_{t-1}$. By Lemma \ref{nonextendable1} (4), we now see that neither $v_2$ nor $v_{t-2}$ is adjacent with both $v_0$ and $x$. So $\{v_2, v_{t-2}\} \cap S' = \emptyset$. Hence there is a vertex $v_{i_l} \in S'-\{v_{i_j}\}$ such that $v_{i_a} \not\in \{v_{i_l-1}, v_{i_l+1} \}$. We can argue as for $v_{i_j}$ that $v_{i_l-1} \sim v_1$ and $v_{i_j+1} \sim v_{t-1}$. We may assume $i_j < i_l$. By Lemma \ref{nonextendable2} (6) (i) (with $i=0$, $j=i_j$ and $k=i_l$), it now follows that $C$ is extendable, a contradiction.

Suppose now that there are exactly two vertices of $S$, say $v_{i_a}$ and $v_{i_b}$, that are not adjacent with any vertex of $T$. Then every vertex of $S -\{v_{i_a}, v_{i_b}\}$ is adjacent with every vertex of $T$. Let $S' =S-\{v_{i_a}, v_{i_b}\} = \{v_{i_1}, v_{i_2}, \ldots, v_{i_5}\}$ where $i_1 < i_2 < \ldots < i_5$. Then there is a $v_{i_j-1} \in \{v_{i_2-1}, v_{i_3-1}, v_{i_4-1}\} - \{v_{i_a},v_{i_b}\}$. By Lemmas \ref{nonextendable1}  (1), (2) and (3), $v_{i_j-1} \nsim \{x, v_{t-1}, v_{i_j+1}\}$. By our choice of $v_{i_j-1}$ and the case we are in, $v_{i_j-1} \nsim v_0$. So by Lemma \ref{nonextendable2} (3), $v_{i_j-1} \sim v_1$. By Lemma \ref{nonextendable1} (4), it follows that $v_2$ is not adjacent with $x$. So $i_1 >2$. Similarly there is a vertex $v_{i_k+1} \in  \{v_{i_2+1}, v_{i_3+1}, v_{i_4+1}\} - \{v_{i_a},v_{i_b}\}$ such that $v_{i_k+1} \sim v_{t-1}$. So again by Lemma \ref{nonextendable1} (4), $v_{t-2}$ is not adjacent with $x$. Hence $i_5 < t-2$. So there is an $v_{i_j} \in S'$ such that $\{v_{i_j-1}, v_{i_j+1}\} \cap \{v_{i_a}, v_{i_b}\} = \emptyset$. We can argue as before that $v_{i_j-1} \sim v_1$ and $v_{i_j+1} \sim v_{t-1}$. Moreover, there is either an $i_k < i_j$ such that $v_{i_k+1} \not\in \{v_{i_a}, v_{i_b}\}$ or an $i_k > i_j$ such that $v_{i_k-1} \not\in \{v_{i_a},v_{i_b}\}$. In the first case we can show as before that $v_{i_k+1} \sim v_{t-1}$ and in the second case $v_{i_k-1} \sim v_1$. In either case we obtain, as before, a contradiction to Lemma \ref{nonextendable2} (6). So $v_1 \sim v_{t-1}$.

\noindent{\bf Subcase 5.4.2} $v_1 \sim v_{t-1}$. Let $T=\{x, v_1, v_{t-1}\}$ and $S = N(v_0)-T$. Since $G$ is locally Dirac, there are at least 13 edges joining vertices of $T$ with vertices of $S$. Moreover at least five of these edges are incident with $x$ and at least four edges are incident with each of $v_1$ and $v_{t-1}$. So $S$ contains a common neighbour $v_j$ of $x, v_0$ and $v_1$. By Lemma \ref{nonextendable1} (2), $j \not\in \{2, t-2\}$ and by Lemma \ref{nonextendable1} (3),  $v_{j-1} \nsim v_{j+1}$. Since $v_1 \sim v_{t-1}$ and $x \sim \{v_0, v_j\}$, it follows from Lemma \ref{nonextendable1} (3), that $v_0 \nsim v_{j+1}$. By Lemmas \ref{nonextendable1} (1), (2) and (3), we also see that $v_{j+1} \nsim \{x, v_1, v_{j-1}\}$. Hence $v_{j+1}$ has four non-adjacencies in $\langle N(v_j) \rangle$. So by Lemma \ref{nonextendable2} (3) $deg(v_j) \ge 10$. Hence $v_j$ is another cycle vertex adjacent with an off-cycle neighbour and having maximum degree. Since $v_{j-1} \nsim v_{j+1}$ we can argue as we did for $v_0$ that this is not possible.
Hence $d > 10$.

\noindent{\bf Case 6} $d=11$.  This case can be argued in a similar manner to Case 5 and is included in the Appendix.
\end{proof}

\section{Concluding Remarks}
In this paper we studied the structure, connectivity and edge-connectivity as well as the cycle structure of locally Dirac and Ore graphs. It follows from the work done in \cite{HK} that locally Dirac graphs are hamiltonian as well as $\{1,2\}$-extendable. The results from Section 3 suggest that these graphs have an even richer cycle structure. Indeed these results lend supporting evidence to Ryj\'{a}\v{c}ek's conjecture. However, it remains on open problem to determine whether Ryj\'{a}\v{c}ek's conjecture holds for all locally Dirac graphs.

\section{Appendix: Proof of Case 6 of Theorem \ref{cycle_extendability_in_loc_Dirac}}

\noindent{\bf Case 6} $d=11$.
Let $x$ be an off-cycle neighbour of $v_0$. Let $T = \{x,v_1,v_{t-1}\}$ and $S=N(v_0) - T$.  \\
{\bf Subcase 6.1} Assume first that $v_1 \nsim v_{t-1}$. Since $G$ is locally Dirac, there exist at least 18 edges joining vertices of $T$ with vertices of $S$. So there exists at least two vertices of $S$ that are adjacent with every vertex of $T$.

Assume first that $\{x, v_0\} \sim \{v_2, v_{t-2}\}$.  Suppose there exists a $v_{j} \in S-\{v_2,v_{t-2}\}$ such that $v_{j} \sim T$. Then, by Lemmas \ref{nonextendable1} (1) - (4), $v_{j-1} \nsim \{x, v_1, v_{j+1}, v_{t-1}\}$ and $v_{j+1} \nsim \{x,v_1,v_{j-1}, v_{t-1}\}$. So by Lemma \ref{nonextendable2} (3), $\{v_{j-1}, v_{j+1}\} \sim v_0$. So there exist at least two neighbours of $v_0$ that are not adjacent with any vertex of $T$. But then all vertices of $S'=S-\{v_{j-1}, v_{j+1}\}$ are adjacent with all three vertices of $T$.  Let $v_{k} \in S'-\{v_{j}\}$. We may assume $k > j$. Then as for $v_{j+1}$ we can show that $v_{k+1} \nsim \{x,v_1,v_{t-1},v_{k-1}\}$. Hence $v_{k+1} \sim v_0$. So $S$ has three vertices none of which are adjacent with any vertex of $T$. This is not possible. So we may assume that $v_2$ and $v_{t-2}$ are the only vertices of $S$ adjacent with all three vertices of $T$. Hence all vertices of $S-\{v_2, v_{t-2}\}$ must be adjacent with exactly two vertices of $T$ and hence lie on $C$. So there are four vertices of $S-\{v_2,v_{t-2}\}$ adjacent with $x$ and exactly one of $v_1$ and $v_{t-1}$ and there exist two vertices in $S-\{v_2, v_{t-2}\}$ adjacent with $v_1$ and $v_{t-1}$ but not with $x$. Since $G$ is locally Dirac, $v_2$ is adjacent with at least two vertices of $S -\{v_{t-2}\}$. Let $v_{j}$ be a neighbour of $v_2$ in $S-\{v_2, v_{t-2}\}$. We consider three cases. Suppose first that  $v_{j} \sim (T-\{v_{t-1}\})$. By Lemmas \ref{nonextendable1} (1), (2) and (3) and the above observation, $v_{j+1} \nsim \{x, v_1, v_0,  v_{j-1}\}$. So by Lemma \ref{nonextendable2} (3), $v_{j+1} \sim v_2$. We now see that $v_3$ has five non-adjacencies  in $\langle N(v_2) \rangle$, namely, $v_3 \nsim \{x, v_1,v_{t-1}, v_0, v_{j+1}\}$ which is not possible. So this case cannot occur. Suppose next that $v_{j}  \sim (T-\{v_1\})$. This time we can show that $v_{j+1} \nsim \{x, v_{t-1}, v_0, v_{j-1}\}$. So $v_{j+1} \sim v_2$. Since $v_3 \nsim \{x,v_1, v_{t-1}, v_0\}$, it follows from Lemma \ref{nonextendable2} (3) that $v_3 \sim v_{j+1}$ which contradicts Lemma \ref{nonextendable1} (2). Lastly assume $v_{j} \sim T-\{x\}$. Then $j >3$. From the cases we have considered and since $G$ is locally Dirac we see that $v_2 \sim v_{t-2}$ and $v_{t-2} \sim v_{j}$. By Lemma \ref{nonextendable1} (4), $v_{j+1} \nsim \{v_1,v_{t-1}\}$ and thus by the above observation, $v_{j+1} \nsim v_0$. By Lemma \ref{nonextendable2} (4), $v_{j+1} \nsim v_{j-1}$.  So, by Lemma \ref{nonextendable2} (3), $v_{j+1} \sim \{v_2, v_{t-2}\}$.  Similarly $v_{j-1} \sim \{v_2, v_{t-2}\}$. As before we can argue that $v_3 \nsim \{x, v_1, v_{t-1}, v_0\}$ and hence $v_{3} \sim \{v_{j-1}, v_{j+1}\}$. But now $v_0xv_2v_{j-1} \overleftarrow{C}v_3v_{j+1} \overrightarrow{C}v_{t-1}v_{j}v_1v_0$ is an extension of $C$ which is not possible.

So either $v_2$ or $v_{t-2}$, say $v_{t-2}$,  is not adjacent with both $v_0$ and $x$.  Assume first that $v_2 \sim \{x, v_0\}$. Then there is a $v_{j} \in S-\{v_2\}$ such that $v_{j} \sim T$ and $j \ne t-2$. By Lemma \ref{nonextendable1} (2), $j >3$. By Lemmas \ref{nonextendable1} (1) - (4), $v_{j-1} \nsim \{x,v_1,v_{j+1}, v_{t-1}\}$. So by Lemma \ref{nonextendable2} (3), $v_{j-1} \sim v_0$. But then there exist at least four vertices in $S$ adjacent with every vertex of $T$ and hence at least three vertices in $S-\{v_2\}$ adjacent with all vertices of $T$. However, then there exist at least three vertices of $S$ not adjacent with any vertex of $T$ which is not possible. So neither $v_2$ nor $v_{t-2}$ is adjacent with both $x$ and $v_0$.

Let $v_{j}, v_{k} \in S$ be vertices adjacent with all vertices of $T$ where $j < k$. By the above,  $2 < j< k < t-2$. Suppose that these are the only vertices of $S$ that are adjacent with every vertex of $T$. By an earlier observation, the remaining vertices of $S$ are necessarily adjacent with exactly two vertices of $T$. By Lemmas \ref{nonextendable1} (1), (2) and (3), $v_{j+1} \nsim \{x, v_1, v_{j-1}\}$. By our observation, $v_{j+1} \nsim v_0$. Hence by Lemma \ref{nonextendable2} (3), $v_{j+1} \sim v_{t-1}$. Similarly $v_{k-1} \sim v_1$. By Lemma \ref{nonextendable2} (6) (i) (with $i=0$), $C$ is extendable which is not possible. So there exists at least three vertices of $S$ that are adjacent with all three vertices of $T$.  If there exists exactly three vertices of $S$ that are adjacent with all three vertices of $T$, then there is exactly one vertex in $S$ that is adjacent with exactly one vertex of $T$. So there exist two vertices $v_{j}, v_{k} \in S$ (where $j<k$) that are adjacent with every vertex of $T$ and such that $v_{j+1} \nsim v_0$ and $v_{k-1} \nsim v_0$. Since by Lemmas \ref{nonextendable1} (1), (2) and (3), we also know that $v_{j+1} \nsim \{x, v_1, v_{j-1}\}$ and $v_{k-1} \nsim \{x, v_{t-1}, v_{k+1}\}$, it follows that $v_{j+1} \sim v_{t-1}$ and $v_{k-1} \sim v_1$. So by Lemma \ref{nonextendable2} (6) (i) (with $i=0$) $C$ is extendable. So we may assume that $S$ contains at least four vertices that are adjacent with every vertex of $T$. Since $S$ has at most two vertices that are adjacent with at most one vertex of $T$, there exist two vertices $v_{j}, v_{k} \in S$ (where $j<k$) that are adjacent with every vertex of $T$ and such that $v_{j+1} \nsim v_0$ and $v_{k-1} \nsim v_0$. As in the previous case, $v_{j+1} \sim v_{t-1}$ and $v_{k-1} \sim v_1$. So, by Lemma \ref{nonextendable2} (6), $C$ is extendable which is not possible.

\noindent{\bf Subcase 6.2}  $v_1 \sim v_{t-1}$. Suppose there exist $v_{j}, v_{k} \in S$ such that $\{v_{j}, v_{k}\} \sim T$ where $j < k$. Since $v_1 \sim v_{t-1}$, it follows from Lemma \ref{nonextendable1} (2) that $2 < j < k < t-2$. By Lemmas \ref{nonextendable1} (1), (2), (3) and (4), $v_{k-1} \nsim \{x, v_0, v_{k+1}, v_{t-1}\}$ and $v_{j+1} \nsim \{x, v_0, v_1, v_{j-1}\}$. Hence by Lemma \ref{nonextendable2} (3), $v_{k-1} \sim v_1$ and $v_{j+1} \sim v_{t-1}$. So, by Lemma \ref{nonextendable2} (6), $C$ is extendable, a contradiction. Suppose next that there exists exactly one vertex $v_{j} \in S$ such that $v_{j} \sim T$.  Suppose $v_{j} \sim v_{k}$ where $v_{k} \sim (T-\{v_1\})$ or $v_{k} \sim (T-\{v_{t-1}\})$. We may assume $j < k$; the case where $j > k$ can be argued similarly. As before, we see that $2 <k< j < t-2$. Assume first that $v_{k} \sim (T-\{v_1\})$.   By Lemmas \ref{nonextendable1} (1) - (4), $v_{k-1} \nsim \{x, v_{t-1}, v_0, v_{k+1} \}$. Hence, by Lemma \ref{nonextendable2}(3), $v_{k-1} \sim v_{j}$. By Lemmas \ref{nonextendable1} (1) - (4), $v_{j-1} \nsim \{x, v_0, v_{t-1}, v_{j+1},v_{k-1}\}$.  By Lemma \ref{nonextendable2}(3) this is not possible unless $v_{j+1} = v_{k-1}$. However, then $v_{j+1}$ has five non-adjcencies in $\langle N(v_{j}) \rangle$, namely $v_{j+1} \nsim \{x, v_0, v_1, v_{t-1}, v_{j-1} \}$ which is not possible. Assume next that $v_{k} \sim (T-\{v_{t-1}\})$. By Lemmas \ref{nonextendable1} (1) - (3), $v_{j-1} \nsim \{x, v_0, v_{t-1}, v_{j+1}\}$. Hence, by Lemma \ref{nonextendable2} (3), $v_{j-1} \sim v_{k}$. Similarly $v_{k+1} \nsim \{x, v_0, v_1, v_{k-1}\}$ and so $v_{k+1} \sim v_{j}$. But now $v_{j+1}$ has five non-adjacencies in $\langle N(v_{j}) \rangle$, namely, $v_{j+1} \nsim \{x,v_0,v_1,v_{j-1}, v_{k+1}\}$, contrary to Lemma \ref{nonextendable2} (3). So $v_{j}$ is not adjacent with a vertex of $S$ that is adjacent with both $x$ and at least one of $v_1$ and $v_{t-1}$.

Since $v_{j}$ is the only vertex of $S$ adjacent with every vertex of $T$, there are six vertices of $S - \{v_{j}\}$ adjacent with exactly two vertices of $T$ and one vertex adjacent with exactly one vertex of $T$. Since $S- \{v_{j}\}$ has at least five vertices adjacent with $x$ and since $G$ is locally Dirac, $v_{j}$ must be adjacent with a vertex of $S$ that is a neighbour of $x$. By the above, such a vertex is not adjacent with either $v_1$ or $v_{t-1}$. So there are two vertices of $S$ adjacent with $T-\{x\}$ and $v_{j}$ is adjacent with both of these vertices.
 Let $v_{j} \sim v_{k}$ where $v_{k} \sim T-\{x\}$.  Hence $v_{j} \sim v_{k}$ where $v_{k} \sim T-\{x\}$. Assume $j < k$. The case where $j > k$ can be argued similarly. Note that $2 < j$ and that $k \ne j+1$, by Lemma \ref{nonextendable1} (2). As before we can argue that $v_{j-1}$ and $v_{j+1}$ both have four non-adjacencies in $\langle N(v_{j}) \rangle$, namely $ v_{j-1} \nsim \{x,v_0,v_{t-1}, v_{j+1}\}$ and $v_{j+1} \nsim \{x, v_0, v_1, v_{j-1}\}$. So, by Lemma \ref{nonextendable2} (3), $v_{j+1} \sim \{v_{t-1}, v_{k}\}$ and $v_{j-1} \sim \{v_1, v_{k}\}$. We consider the non-adjacencies of $v_{k-1}$ in $\langle N(v_{k}) \rangle$. By Lemma \ref{nonextendable1} (4), $v_{k-1} \nsim v_0$ since $v_1 \sim v_{t-1}$. By Lemma \ref{nonextendable2} (6) we see that $v_{k-1} \nsim v_1$. Observe next that $v_{k-1} \nsim v_{t-1}$; otherwise, $v_0xv_{j} \overrightarrow{C} v_{k-1} v_{t-1} \overleftarrow{C} v_{k} v_{j+1} \overleftarrow{C} v_1v_0$ is an extension of $C$. Next observe that $v_{k-1} \nsim v_{j-1}$; otherwise, $v_0xv_{j}v_1 \overrightarrow{C}v_{j-1}v_{k-1} \overleftarrow{C} v_{j+1}v_{k} \overleftarrow{C}v_{t-1}v_0$ is an extension of $C$. Since $j-1 \ne 1$, we have, by Lemma \ref{nonextendable2} (3) $v_{k-1} \sim \{v_{j}, v_{j+1}\}$. But now $v_0xv_{j}v_{k-1} \overleftarrow{C}v_{j+1}v_{t-1} \overleftarrow{C}v_{k}v_{j-1} \overleftarrow{C}v_1v_0$ is an extension of $C$.

So we may assume that no vertex of $S$ is adjacent with all three vertices of $T$. Then every vertex of $S$ is adjacent with exactly two vertices of $T$ and there exist exactly three vertices in $S$ adjacent with $x$ and $v_1$; exactly three adjacent with $x$ and $v_{t-1}$ and exactly two adjacent with $v_1$ and $v_{t-1}$. We say that a vertex $v_{a}$ of $S$ is of Type 1, 2 or 3, depending on whether $v_{a}$ is adjacent with all vertices of $T-\{v_{t-1}\}$, or all vertices of $T-\{v_1\}$ or all vertices of $T-\{x\}$, respectively. We establish several facts that will aid us in completing our proof.\\
\noindent{\em Fact 1:} If $v_{j}$ and $v_{k}$ are Type 1 vertices and $v_{j} \sim v_{k}$, then $k=j+2$ or $k=j-2$.\\
\noindent{\em Proof of Fact 1.}  We assume $j < k$. The other case can be proven in the same way. (Note that since $v_1 \sim v_{t-1}$, Lemma \ref{nonextendable1} (3) guarantees that $2 < k < k < t-2$. Also, by Lemma \ref{nonextendable1} (1), $k > j+1$.) By Lemmas \ref{nonextendable1} (1), (2), and (3), $v_{j+1} \nsim \{x,v_0,v_1, v_{j-1}\}$. So, by Lemma \ref{nonextendable2} (3), $v_{j+1} \sim v_{k}$. Again, using Lemmas \ref{nonextendable1} (1), (2) and (3) we see that $v_{k+1}$ has the following non-adjacencies in $\langle N(v_{k}) \rangle$, $v_{k+1} \nsim \{x,v_0,v_1,v_{j+1}, v_{k-1}\}$. By Lemma \ref{nonextendable2} (3) this is not possible unless $v_{j+1} = v_{k-1}$, i.e. if $k = j+2$. $\Box$

\noindent{\em Fact 2:} If $v_{j}$ is a Type 1 vertex, $v_{k}$ is a Type 2 vertex and $v_{j} \sim v_{k}$, then $k = j+2$.\\
{\em Proof of Fact 2.} We show first that if $k < j$, then $C$ is extendable. As before we see that $2 < k<j<t-2$ and $k+2 \le j$. By Lemmas \ref{nonextendable1} (1), (2) and (3), $v_{k-1} \nsim \{x, v_0, v_{t-1}, v_{k+1}\}$. So, by Lemma \ref{nonextendable2} (3), $v_{k-1} \sim v_{j}$. Similarly $v_{j+1} \nsim \{x, v_0,v_1,v_{j}\}$ and hence $v_{j} \sim \{v_{k}, v_{k-1}\}$. By Lemmas \ref{nonextendable1} (1), (2) and (3), $v_{k+1} \nsim \{x, v_0, v_{k-1}, v_{j+1}\}$. Hence $v_{k+1} \sim v_{t-1}$ and similarly $v_{j-1} \sim v_1$, contrary to Lemma \ref{nonextendable2} (6).

So $j < k$ and $k \ge j+2$. As before $v_{j+1} \nsim \{x,v_0, v_1, v_{j-1}\}$ and hence $v_{j+1} \sim v_{k}$. Similarly $v_{k-1} \nsim \{x,v_0,v_{t-1}, v_{k+1}\}$ and hence $v_{k-1} \sim v_{j}$. If $k \ne j+1$, $v_{j-1}$ has four non-adjacencies in $\langle N(v_{j}) \rangle$, namely, $v_{j-1} \nsim \{x, v_0, v_{j+1}, v_{k-1}\}$. So $v_{j-1} \sim v_{k}$. Now we can show similarly that $v_{k+1} \sim \{v_{j}, v_{j-1}\}$. But now $v_{j+1}$ has five non-adjacencies in $ \langle N(v_{j}) \rangle$. $\Box$

\noindent{\em Fact 3:} If $v_{j}$ is a Type 1 vertex, then $v_{j}$ is adjacent with at most one Type 2 vertex.\\
\noindent{\em Proof of Fact 3.} From Fact 2, we know that if $v_{j}$ is adjacent with a vertex of Type 2 it must be $v_{j+2}$. $\Box$

\noindent{\em Fact 4:} If $v_{j}$ is a Type 1 vertex and $v_{j}$ is not adjacent with and Type 2 vertex, then $v_{j} \sim \{ v_{j+2}, v_{j-2}\}$ and $v_{j+2}$ and $v_{j-2}$ are both Type 1 vertices.\\
{\em Proof of Fact 4.} If $v_{j}$ is not adjacent with any of the three Type 2 vertices, then these vertices and $v_{t-1}$ are the only non-neighbours of $v_{j}$ in $\langle N(v_0) \rangle$ and so $v_{j}$ is adjacent with all remaining vertices of $S$. In particular, $v_{j}$ is adjacent with the other two Type 1 vertices, which, by Fact 1, must be $v_{j+2}$ and $v_{j-2}$. $\Box$

\noindent{\em Fact 5:} If $v_{j}$ is a Type 1 vertex that is adjacent with a Type 1 vertex $v_{l}$ and a Type 2 vertex $v_{k}$, then $v_{l} =v_{j-2}$ and $v_{k} =v_{j+2}$.\\
{\em Proof of Fact 5.} By Fact 2, $v_{k} = v_{j+2}$. By Fact 1, it now necessarily follows that $v_{l} = v_{j-2}$. $\Box$

\noindent{\em Fact 6:} If $v_{j}$ is a Type 1 vertex, then $v_{j}$ is adjacent with $v_{j+2}$ and $v_{j-2}$ and either (i) both $v_{j+2}$ and $v_{j-2}$ are Type 1 vertices or (ii) $v_{j+2}$ is a Type 2 vertex and $v_{j-2}$ is a Type 1 vertex.\\
{\em Proof of Fact 6.} By Lemma \ref{nonextendable2} (3), $v_{j}$ is non-adjacent with at most three vertices of $S$ in addition to $v_{t-1}$. By Fact 3, $v_{j}$ is adjacent with at most one Type 2 vertex. Hence $v_{j}$ is necessarily adjacent with at least one Type 1 vertex. By Fact 4, if $v_{j}$ is not adjacent with a Type 2 vertex, then it must be adjacent with two Type 1 vertices. The rest of the result follows from Facts 4 and 5. $\Box$

We now complete our proof. Let $v_{j}$ be a Type 1 vertex. By Fact 6, $v_{j} \sim \{v_{j-2}, v_{j+2}\}$. Since $x \sim \{v_{j-2}, v_{j+2}\}$ and $v_1 \sim v_{t-1}$, it follows from Lemma \ref{nonextendable1} (3) that $2 < j-2$ and $j+2 < t-2$. Suppose first that $v_{j-2}$ and $v_{j+2}$ are both Type 1 vertices. Now, by Lemmas \ref{nonextendable1} (1), (2) and (3), $v_{j-1} \nsim \{x, v_0, v_1, v_{j+1}\}$. So $v_{j-1} \sim v_{j+2}$. Now again by Lemmas \ref{nonextendable1} (1), (2) and (3), $v_{j+1}$ has five non-adjacencies in $\langle N(v_{j+2}) \rangle$, namely, $v_{j+3} \nsim \{x,v_0, v_1, v_{j+1}, v_{j-1}\}$, contrary to Lemma \ref{nonextendable2} (3). So, by Fact 6, $v_{j+2}$ is of Type 2 and $v_{j-2}$ is of Type 1. Again, by Lemmas \ref{nonextendable1} (1), (2) and (3), $v_{j-1} \nsim \{x, v_0, v_1, v_{j+1}\}$. So $v_{j-1} \sim v_{j+2}$. Now $v_{j+1}$ has five non-adjacencies in $\langle N(v_{j+2}) \rangle$, namely, $v_{j+1} \nsim \{x,v_0,v_{t-1}, v_{j-1}, v_{j+3}\}$, contrary to Lemma \ref{nonextendable2} (3).


\begin{thebibliography}{99}

\bibitem{ad} S.A. van Aardt, J. de Wet, Hamiltonicity and traceability of locally
hamiltonian and locally traceable graphs. Preprint.

\bibitem{afow} S.A. van Aardt, M. Frick, O.R. Oellermann and J. de Wet, Global cycle properties in locally connected, locally traceable and locally
hamiltonian graphs. Preprint.

\bibitem{B} J.A. Bondy, Pancyclic graphs I, J. Combin. Theory  11 (1971) 80--84.


\bibitem{B2} J.A. Bondy, Pancyclic graphs: Recent results, infinite and finite sets, in: Colloq. Math.
Soc. J\'{a}nos Bolyai, Keszthely, Hungary, 1973, 181--187.


\bibitem{bm} J.A. Bondy, U.S.R. Murty, Graph Theory, Springer, 2008.

\bibitem{bno} A. Borchert, S. Nicol, and O.R. Oellermann, Global cycle properties of locally isometric graphs. Submitted. Preprint available on arXiv 1506.03310 [math.CO].

\bibitem{bno2} A. Borchert, S. Nicol and O.R. Oellermann, Global cycle properties in graphs with large minimum clustering coefficient. Submitted. Preprint available on arXiv 1506.03691 [math.CO]


\bibitem{CGP} G. Chartrand, R. Gould, A.D. Polimeni, A note on locally connected and Hamiltonian-connected graphs, Israel J. Math. 33 (1979) 5--8.

\bibitem{CP} G. Chartrand, R.E. Pippert, Locally connected graphs, \v{C}asopis  P\v{e}st. Mat., 99 (1974) 158--163.

\bibitem{C} L. Clark, Hamiltonian properties of connected locally connected graphs, Congr. Numer. 32 (1981) 199--204.


\bibitem{D} G.A. Dirac, Some theorems on abstract graphs, Proc. London Math. Soc. 2 (1952) 69--81.


\bibitem{GOPS} V.S. Gordon, Y.L. Orlovich, C. Potts, V.A. Strusevich, Hamiltonian properties of locally connected graphs with bounded vertex degree, Discrete Appl. Math. 159 (2011) 1759--1774.

\bibitem{HK} A. S. Hasratian and N.K. Khachatrian, Some localization theorems on hamiltonian circuits, J. Combin. Theory Ser B 49 (1990) 287--294.

\bibitem{H1} G.R.T. Hendry, Extending cycles in graphs, Discrete Math. 85 (1990) 59--72.

\bibitem{H2} G.R.T. Hendry, A strengthening of Kikust's theorem, J. Graph Theory 13 (1989) 257--260.


\bibitem{OS} D.J. Oberly, D.P. Sumner, Every connected, locally connected nontrivial graph with no induced claw is Hamiltonian, J. Graph Theory 3 (1979) 351--356.

\bibitem{Ore} O. Ore, Note on Hamilton circuits, Amer. Math. Monthly 67 (1960) 55.

\bibitem{Pa} C.M. Pareek, On the maximum degree of locally hamiltonian non-hamiltonian graphs, Utilitas Math. 23 (1983) 103--120.

\bibitem{PS} C.M. Pareek, Z. Skupie\'{n}, On the smallest non-hamiltonian locally hamiltonian graph, J. Univ. Kuwait (Sci.) 10 (1983) 9--16.


\bibitem{Pl} J. Plesn\'{i}k, Critical graphs of given diameter, Acta Fac. Rerum Natur. Univ. Comenian Math, 30 (1975) 71 --93.

\bibitem{S} Z. Skupie\'{n}, Locally Hamiltonian and planar graphs, Fund. Math. 58 (1966) 193--200.

\bibitem{S2} Z. Skupie\'{n}, Locally hamiltonian graphs and Kuratowski's theorem, Bull. Acad. Poln. Sci. S\'{e}r. Sci. Math. Astronom. Phys., 13 (1965) 615--619.

\bibitem{ws} D.J. Watts and S. Strogatz, Collective dynamics of `small world' networks, Nature, 393 (1998) 440--442.

\bibitem{WR} D.B. West, Research problems, Discrete Math. 272 (2003) 301--306.



\end{thebibliography}
\end{document}